\documentclass[10pt, reqno]{amsart}

\def\BibTeX{{\rm B\kern-.05em{\sc i\kern-.025em b}\kern-.08em
    T\kern-.1667em\lower.7ex\hbox{E}\kern-.125emX}}

\newtheorem{thm}{Theorem}[section]
\newtheorem{cor}[thm]{Corollary}
\newtheorem{lem}[thm]{Lemma}
\newtheorem{prop}[thm]{Proposition}
\newtheorem{cond}[thm]{Condition}

\theoremstyle{definition}

\theoremstyle{remark}
\newtheorem{rem}{Remark}[section]

\numberwithin{equation}{section}

    \newcommand{\floor}[1]{\lfloor#1\rfloor}

    \newcommand{\EE}{\mathbb{E}}

    \newcommand{\Exp}{\operatorname{E}}
    \newcommand{\E}{\Exp}

    \renewcommand{\Pr}{\operatorname{P}}

    \newcommand{\eqd}{\stackrel{d}{=}}
    \newcommand{\dto}{\xrightarrow{d}}
    \newcommand{\wto}{\xrightarrow{w}}
    \newcommand{\vto}{\xrightarrow{v}}
    \newcommand{\fidi}{\xrightarrow{\text{fidi}}}

    \newcommand{\eind}{\stackrel{d}{=}}
    \newcommand{\rmd}{\mathrm{d}}

\newcommand{\be}{\begin{equation}}
    \newcommand{\ee}{\end{equation}}

\begin{document}

\title[Functional limit theorem for self-normalized linear processes] 
{A functional limit theorem for self-normalized linear processes with random coefficients and i.i.d.~heavy-tailed innovations}

%
\author{Danijel Krizmani\'{c}}

\address{Danijel Krizmani\'{c}\\ Faculty of Mathematics\\
        University of Rijeka\\
        Radmile Matej\v{c}i\'{c} 2, 51000 Rijeka\\
        Croatia}
\email{dkrizmanic@math.uniri.hr}



\subjclass[2010]{Primary 60F17; Secondary 60G52,}
\keywords{Functional limit theorem, Regular variation, $M_{2}$ topology, Linear process, Self-normalized process}


\begin{abstract}
In this article we derive a self-normalized functional limit theorem for strictly stationary linear processes with i.i.d.~heavy-tailed innovations and random coefficients under the condition that all partial sums of the series of coefficients are a.s.~bounded between zero and the sum of the series. The convergence takes part in the space of c\`{a}dl\`{a}g functions on $[0,1]$ with the Skorokhod $M_{2}$ topology.
\end{abstract}

\maketitle

\section{Introduction}
\label{intro}

We consider linear processes with random coefficients
\begin{equation}\label{e:MArandom}
X_{i} = \sum_{j=0}^{\infty}C_{j}Z_{i-j}, \qquad i \in \mathbb{Z},
\end{equation}
where $(Z_{i})_{i \in \mathbb{Z}}$ is a strictly stationary sequence of regularly varying random variables with index of regular variation $\alpha \in (0,2)$, and
$(C_{i})_{i \geq 0 }$ is a sequence of random variables independent of $(Z_{i})$ such that the above series is a.s.~convergent.
The regular variation property means that
\begin{equation}\label{e:regvar}
 \Pr(|Z_{i}| > x) = x^{-\alpha} L(x), \qquad x>0,
\end{equation}
where $L$ is a slowly varying function at $\infty$. Let $(a_{n})$ be a sequence of positive real numbers tending to infinity such that
\be\label{eq:niz}
n \Pr (|Z_{1}|>a_{n}) \to 1,
\ee
as $n \to \infty$. Then
\begin{equation}
  \label{eq:onedimregvar}
  n \Pr( a_n^{-1} Z_i \in \cdot \, ) \vto \mu( \, \cdot \,) \qquad \textrm{as} \ n \to \infty,
\end{equation}
where the arrow "$\vto$" denotes the vague convergence of measures and $\mu$ is a measure on $\EE = \overline{\mathbb{R}} \setminus \{0\}$ given by
\begin{equation}
\label{eq:mu}
  \mu(\rmd x) = \bigl( p \, 1_{(0, \infty)}(x) + r \, 1_{(-\infty, 0)}(x) \bigr) \, \alpha |x|^{-\alpha-1} \, \rmd x,
\end{equation}
with
\be\label{eq:pq}
p =   \lim_{x \to \infty} \frac{\Pr(Z_i > x)}{\Pr(|Z_i| > x)} \qquad \textrm{and} \qquad
  r =   \lim_{x \to \infty} \frac{\Pr(Z_i < -x)}{\Pr(|Z_i| > x)}.
\ee
One well-known sufficient condition for the a.s.~convergence of the series in (\ref{e:MArandom}) is
\begin{equation}\label{e:momcond}
\sum_{j=0}^{\infty} \mathrm{E} |C_{j}|^{\delta} < \infty \qquad \textrm{for some}  \ \delta < \alpha,\,0 < \delta \leq 1.
\end{equation}
This condition, together with stationarity of the sequence $(Z_{i})$ and $\mathrm{E}|Z_{1}|^{\beta} < \infty$ for every $\beta \in (0,\alpha)$ (which follows from the regular variation property and Karamata's theorem), yields
$$ \mathrm{E}|X_{i}|^{\delta} \leq \sum_{j=0}^{\infty} \mathrm{E}|C_{j}|^{\delta} \mathrm{E}|Z_{i-j}|^{\delta} = \mathrm{E}|Z_{1}|^{\delta} \sum_{j=0}^{\infty}\mathrm{E}|C_{j}|^{\delta} < \infty,$$
which implies the a.s.~convergence of the series in (\ref{e:MArandom}). Another condition that assures the a.s.~convergence of the series in the definition of linear processes with additionally
$$\begin{array}{rl}
 \nonumber \mathrm{E}(Z_{1})=0, & \quad \textrm{if} \ \alpha \in (1,2),\\[0.2em]
 \nonumber Z_{1} \ \textrm{is symmetric}, & \quad \textrm{if} \ \alpha =1,
\end{array}$$
 and a.s.~bounded coefficients
 can be deduced from the results in Astrauskas~\cite{At83}:
$$ \sum_{j =0}^{\infty}c_{j}^{\alpha} L(c_{j}^{-1}) < \infty,$$
where $(c_{j})$ is a sequence of positive real numbers such that $|C_{j}| \leq c_{j}$ a.s.~for all $j$ (c.f.~Balan et al.~\cite{BJL16}).

In the case when the $Z_{i}$'s are independent, under standard regularity conditions and the assumption that all partial sums of the series $C= \sum_{i=0}^{\infty}C_{i}$ are a.s.~bounded between zero and the sum of the series, i.e.
\be\label{eq:InfiniteMAcond}
0 \le \sum_{i=0}^{s}C_{i} \Bigg/ \sum_{i=0}^{\infty}C_{i} \le 1 \ \ \textrm{a.s.} \qquad \textrm{for every} \ s=0, 1, 2 \ldots,
\ee
a functional limit theorem for the corresponding partial sum stochastic process
\be\label{eq:defVn}
V_{n}(t) = \frac{1}{a_{n}}  \sum_{i=1}^{\floor {nt}}X_{i}, \qquad t \in [0,1],
\ee
holds in the space $D([0,1], \mathbb{R})$ of real-valued c\`{a}dl\`{a}g functions on $[0,1]$ with the Skorokhod $M_{2}$ topology, see Krizmani\'{c}~\cite{Kr19}. This result was then extended in Krizmani\'{c}~\cite{Kr22-1} to the case when the innovations $Z_{i}$ are weakly dependent, in the sense that $(Z_{i})$ is a strongly mixing sequence which satisfies the local dependence condition $D'$ as is given in Davis~\cite{Da83}:
 \begin{equation}\label{e:D'cond}
 \lim_{k \to \infty} \limsup_{n \to \infty}~n \sum_{i=1}^{\lfloor n/k \rfloor} \Pr \bigg( \frac{|Z_{0}|}{a_{n}} > x, \frac{|Z_{i}|}{a_{n}} >x \bigg) = 0 \qquad \textrm{for all} \ x >0.
 \end{equation}
 More precisely,
 \be\label{eq:fconvVn}
 V_{n}(\,\cdot\,) \dto \widetilde{C} V(\,\cdot\,) \qquad \textrm{as} \ n \to \infty,
\ee
 in $D([0,1], \mathbb{R})$ endowed with the $M_{2}$ topology,
where $V$ is an $\alpha$--stable L\'{e}vy process with characteristic triple $(0, \mu, b)$, with $\mu$ as in $(\ref{eq:mu})$,
$$ b = \left\{ \begin{array}{cc}
                                   0, & \quad \alpha = 1,\\[0.4em]
                                   (p-r)\frac{\alpha}{1-\alpha}, & \quad \alpha \in (0,1) \cup (1,2),
                                 \end{array}\right.$$
 and $\widetilde{C}$ is a random variable, independent of $V$, such that $\widetilde{C} \eind C$.
Condition (\ref{e:D'cond}) is satisfied for i.i.d.~regularly varying sequences, and together with the strong mixing property it assures that, as in the i.i.d.~case, the extremes of the sequence $(Z_{i})$ are isolated.
 This corresponds to the situation when the extremal index $\theta$ of the sequence $(Z_{i})$, which can be interpreted as the reciprocal mean cluster size of large exceedances (c.f.~Hsing et al.~\cite{HHL88}), is equal to $1$. Recall here that a strictly stationary sequence of random variables $(\xi_{n})$ has extremal index $\theta$ if for every $\tau >0$ there exists a sequence of real numbers $(u_{n})$ such that
\begin{equation*}\label{eq:eindex}
 \lim_{n \to \infty} n \Pr ( \xi_{1} > u_{n}) \to \tau \qquad \textrm{and} \qquad \lim_{n \to \infty} \Pr \bigg( \max_{1 \leq i \leq n} \xi_{i} \leq u_{n} \bigg) \to e^{-\theta \tau}.
\end{equation*}
It holds that $\theta \in [0,1]$. Recall also that a sequence $(\xi_{n})$ is strongly mixing if $\alpha (n) \to 0$ as $n \to \infty$, where
$$\alpha (n) = \sup \{|\Pr (A \cap B) - \Pr(A) \Pr(B)| : A \in \mathcal{F}_{-\infty}^{0}, B \in \mathcal{F}_{n}^{\infty} \}$$
and $\mathcal{F}_{k}^{l} = \sigma( \{ \xi_{i} : k \leq i \leq l \} )$ for $-\infty \leq k \leq l \leq \infty$. For some related results on limit theory for moving averages with random coefficients we refer to Hult and Samorodnitsky~\cite{HuSa08} and Kulik~\cite{Ku06}.

In this paper we study self-normalized partial sum processes
$\{ \zeta_{n}^{-1}\sum_{i=1}^{\floor {nt}}X_{i},\,t \in [0,1] \}$
as random elements of the space $D([0,1], \mathbb{R})$,
where $\zeta_{n}^{2}=X_{1}^{2} + \ldots + X_{n}^{2}$. Self-normalized processes arise naturally in the study of stochastic integrals, martingale inequalities and various statistical studies. Self-normalization is often an appropriate technique in deriving central limit theorems for partial sums $X_{1}+ \ldots + X_{n}$, $n \in \mathbb{N}$, and functional limit theorems for processes build from partial sums. Functional limit theorems (or invariance principles) play an important role in statistics and econometrics, for example in deriving asymptotic distributions of unit-root test statistics, see Wu~\cite{Wu06}. They have been studied by many authors in the literature. Among them we mention Cs\"{o}rg\H{o} et al.~\cite{CSW03}, Peligrad and Sang~\cite{PeSa12}, Ra\v{c}kauskas and Suquet~\cite{RaSu01} for the case of independent random variables, Balan and Kulik~\cite{BaKu09}, Choi and Moon~\cite{ChMo10} for $\phi$--mixing sequences, Choi et al.~\cite{CSM17}, Kulik~\cite{Ku06-2} and Ra\v{c}kauskas and Suquet~\cite{RaSu11} for a special class of dependent random variables, i.e.~linear processes generated by independent or strictly stationary $\rho$--mixing innovations with infinite variance. For a survey on self-normalized limit theorems we refer to Shao and Wang~\cite{ShWa13}, and for some results and applications of self-normalized process for dependent random variables we refer to a survey paper by de la Pe\~{n}a et al.~\cite{PKL07}.

 Our aim in this paper is to derive a functional limit theorem for the self-normalized partial sum process of linear processes $(X_{i})_{i \in \mathbb{Z}}$ with i.i.d.~heavy-tailed innovations and random coefficients. More precisely, under condition (\ref{eq:InfiniteMAcond}) and appropriate moment conditions on the sequence of coefficients we will show that
\begin{equation}\label{e:SNuvod}
  \frac{1}{\zeta_{n}}  \sum_{i=1}^{\floor {n\,\cdot}}X_{i} \dto \frac{C^{(1)}V(\,\cdot\,)}{\sqrt{C^{(2)}W(1)}} \qquad \textrm{as} \ n \to \infty,
\end{equation}
in $D([0,1], \mathbb{R})$ equipped with the Skorokhod $M_{2}$ topology, where $V$ and $W$ are stable L\'{e}vy process, and $C^{(1)}$ and $C^{(2)}$ are certain random variables related to the sequence of coefficients of the underlying linear process to be specified in subsequent sections. Some auxiliary results in Sections~\ref{S:pomocno} and~\ref{S:jointconv} will be derived under slightly more general conditions than independence for the innovations, namely under condition $D'$ and strong mixing. The independence property for the innovations, among other assumptions, will be crucial in deriving functional limit theorems in Section~\ref{S:FiniteMA} and the subsequent sections.
In order to prove relation (\ref{e:SNuvod}) we will first establish functional convergence of the joint stochastic process
$$ L_{n}(t) = \bigg( \frac{1}{a_{n}}  \sum_{i=1}^{\floor {nt}}X_{i}, \frac{1}{a_{n}^{2}}  \sum_{i=1}^{\floor {nt}}X_{i}^{2} \bigg), \qquad t \in [0,1]$$
in the space of $\mathbb{R}^{2}$--valued c\`{a}dl\`{a}g functions on
$[0,1]$ with the weak Skorokhod $M_{2}$ topology, and then we will apply the continuous mapping theorem to obtain (\ref{e:SNuvod}).
In establishing this joint convergence we will turn our attention first to finite order linear processes, and then to the general case of infinite order linear processes. For $Z_{1}$ we assume the already mentioned standard regularity conditions:
  \begin{eqnarray}\label{e:oceknula}
    \mathrm{E} Z_{1}=0, & & \textrm{if} \ \ \alpha \in (1,2),  \\
    Z_{1} \ \textrm{is symmetric}, & & \textrm{if} \ \ \alpha=1.\label{e:sim}
  \end{eqnarray}
For infinite order moving averages, beside condition (\ref{e:momcond}) we will require also some other moment conditions, which will be specified latter in Section~\ref{S:InfiniteMA}.

The paper is organized as follows. In Section~\ref{S:Mtop} we recall the definitions of Skorokhods's $M_{1}$ and $M_{2}$ topologies and list several lemmas related to $M_{1}$ and $M_{2}$ continuity of addition, multiplication and division that we are going to use in the following sections. Section~\ref{S:pomocno} is devoted to regular variation and point processes convergence. The latter is then transferred in Section~\ref{S:jointconv} to the joint functional convergence of an auxiliary partial sum stochastic process in the space of $\mathbb{R}^{2}$--valued c\`{a}dl\`{a}g functions on
$[0,1]$ with the weak Skorokhod $M_{1}$ topology. We use this result in Section~\ref{S:FiniteMA} to establish functional convergence of the joint stochastic process $L_{n}(\,\cdot\,)$ with respect to the weak $M_{2}$ topology for finite order linear processes with random coefficients and i.i.d.~regularly varying innovations. Then in Section~\ref{S:InfiniteMA} we extend this to infinite order linear processes. Finally, in Section~\ref{S:Selfnormalized} we derive functional convergence of the self-normalized partial sum process of linear processes with random coefficients and i.i.d.~heavy-tailed innovations.

\section{Skorokhod's $M_{1}$ and $M_{2}$ topologies}\label{S:Mtop}

We start with the definition of the Skorokhod weak $M_{2}$ topology in a general space $D([0,1], \mathbb{R}^{d})$ of $\mathbb{R}^{d}$--valued c\`{a}dl\`{a}g functions on
$[0,1]$. It is standardly defined using completed graphs and their parametric representations.

For $x \in D([0,1],
\mathbb{R}^{d})$ the completed (thick) graph of $x$ is the set
\[
  G_{x}
  = \{ (t,z) \in [0,1] \times \mathbb{R}^{d} : z \in [[x(t-), x(t)]]\},
\]
where $x(t-)$ is the left limit of $x$ at $t$ and $[[a,b]]$ is the product segment, i.e.
$[[a,b]]=[a_{1},b_{1}] \times [a_{2},b_{2}] \ldots \times [a_{d},b_{d}]$
for $a=(a_{1}, a_{2}, \ldots, a_{d}), b=(b_{1}, b_{2}, \ldots, b_{d}) \in
\mathbb{R}^{d}$. We define an
order on the graph $G_{x}$ by saying that $(t_{1},z_{1}) \le
(t_{2},z_{2})$ if either (i) $t_{1} < t_{2}$ or (ii) $t_{1} = t_{2}$
and $|x_{j}(t_{1}-) - z_{1j}| \le |x_{j}(t_{2}-) - z_{2j}|$
for all $j=1, 2, \ldots, d$. The relation $\le$ induces only a partial
order on the graph $G_{x}$. A weak $M_{2}$ parametric representation
of the graph $G_{x}$ is a continuous function $(r,u)$
mapping $[0,1]$ into $G_{x}$, such that $r$ is nondecreasing with $r(0)=0$, $r(1)=1$ and $u(1)=x(1)$ ($r$ is the
time component and $u$ the spatial component). Denote by $\Pi_{w}^{M_{2}}(x)$ the set of weak $M_{2}$
parametric representations of the graph $G_{x}$. For $x_{1},x_{2}
\in D([0,1], \mathbb{R}^{d})$ define
\[
  d_{w}(x_{1},x_{2})
  = \inf \{ \|r_{1}-r_{2}\|_{[0,1]} \vee \|u_{1}-u_{2}\|_{[0,1]} : (r_{i},u_{i}) \in \Pi_{w}^{M_{2}}(x_{i}), i=1,2 \},
\]
where $\|x\|_{[0,1]} = \sup \{ \|x(t)\| : t \in [0,1] \}$ for $x \colon [0,1] \to \mathbb{R}^{k}$, with $\| \cdot \|$ denoting the max-norm on $\mathbb{R}^{k}$. Now we define the weak $M_{2}$ topology sequentially by saying that a sequence $(x_{n})_{n}$ converges to $x$ in $D([0,1], \mathbb{R}^{d})$ in the weak Skorokhod $M_{2}$
topology if $d_{w}(x_{n},x)\to 0$ as $n \to \infty$.

If we replace the graph $G_{x}$ with the completed (thin) graph
\[
  \Gamma_{x}
  = \{ (t,z) \in [0,1] \times \mathbb{R}^{d} : z= \lambda x(t-) + (1-\lambda)x(t) \ \text{for some}\ \lambda \in [0,1] \},
\]
and a weak $M_{2}$ parametric representation with a strong $M_{2}$ parametric representation (i.e. a continuous function $(r,u)$ mapping $[0,1]$ onto $\Gamma_{x}$ such that $r$ is nondecreasing), then we obtain the standard (or strong) $M_{2}$ topology. This topology is stronger than the weak $M_{2}$ topology, but they coincide if $d=1$.

Note that in $M_{2}$ parametric representations $(r,u)$ we require that only the time component $r$ is nondecreasing. If we also require that the spatial component $u$ is nondecreasing, then we obtain $M_{1}$ parametric representations and (weak and strong) Skorokhod $M_{1}$ topologies, which are stronger than the corresponding $M_{2}$ topologies.

Often the following characterization of the $M_{2}$ topology with the Hausdorff metric on the spaces of graphs is useful. For $x_{1},x_{2} \in D([0,1], \mathbb{R}^{d})$, the $M_{2}$ distance between $x_{1}$ and $x_{2}$ is given by
\begin{equation}\label{e:defM2H}
 d_{M_{2}}(x_{1}, x_{2}) = \bigg(\sup_{a \in \Gamma_{x_{1}}} \inf_{b \in \Gamma_{x_{2}}} d(a,b) \bigg) \vee \bigg(\sup_{a \in \Gamma_{x_{2}}} \inf_{b \in \Gamma_{x_{1}}} d(a,b) \bigg),
\end{equation}
where $d$ is the metric induced by the maximum norm on $\mathbb{R}^{d+1}$.
The metric $d_{M_{2}}$ induces the strong $M_{2}$ topology.
The weak $M_{2}$ topology on $D([0,1], \mathbb{R}^{d})$
coincides with the (product) topology induced by the metric
\begin{equation}\label{e:defdp}
 d_{p}^{M_{2}}(x_{1},x_{2})= \max_{j=1,\ldots,d}d_{M_{2}}(x_{1j},x_{2j})
\end{equation}
 for $x_{i}=(x_{i1}, \ldots, x_{id}) \in D([0,1],
 \mathbb{R}^{d})$, $i=1,2$. For detailed discussion of the strong and weak $M_{2}$ topologies we refer to Whitt~\cite{Whitt02}, sections 12.10--12.11.
 For simplicity of notation let $D^{d} \equiv D([0,1], \mathbb{R}^{d})$.

Similar to relation (\ref{e:defdp}) for the weak $M_{2}$ topology, the weak $M_{1}$ topology on $D^{d}$ coincides with the topology induced by the metric
\begin{equation}\label{e:defdpM1}
 d_{p}^{M_{1}}(x_{1},x_{2})= \max_{j=1,\ldots,d}d_{M_{1}}(x_{1j},x_{2j})
\end{equation}
 for $x_{i}=(x_{i1}, \ldots, x_{id}) \in D^{d}$, $i=1,2$ (see Whitt~\cite{Whitt02}, Theorem 12.5.2). Here $d_{M_{1}}$ denotes the $M_{1}$ metric on $D^{1}$, defined by
$$ d_{M_{1}}(y_{1},y_{2})
  = \inf \{ \|r_{1}-r_{2}\|_{[0,1]} \vee \|u_{1}-u_{2}\|_{[0,1]} : (r_{i},u_{i}) \in \Pi^{M_{1}}(y_{i}), i=1,2 \}$$
for $y_{1},y_{2} \in D^{1}$, where $\Pi^{M_{1}}(y)$ is the set of $M_{1}$ parametric representations of the completed graph $\Gamma_{y}$, i.e.~continuous nondecreasing functions $(r,u)$ mapping $[0,1]$ onto $\Gamma_{y}$.

Denote by $C^{\uparrow}_{0}([0,1], \mathbb{R})$ the subset of functions $x$ in $D^{1}$ that are continuous and nondecreasing with $x(0)>0$. Since a continuous function on a compact set is bounded and attains its minimum value, it holds that
$$ C^{\uparrow}_{0}([0,1], \mathbb{R}) = \bigcup_{k=1}^{\infty}C^{\uparrow}_{k}([0,1], \mathbb{R}),$$
where $C^{\uparrow}_{k}([0,1], \mathbb{R})$ is the subset of functions $x$ in $C^{\uparrow}_{0}([0,1], \mathbb{R})$ with $x(0) \geq 1/k$.
Since $C^{\uparrow}_{k}([0,1], \mathbb{R})$ is a closed set with respect to the Skorokhod $J_{1}$ topology (cf.~Kulik and Soulier~\cite{KuSo}, Proposition C.2.4), it is also a measurable subset of $D^{1}$ with the $J_{1}$ topology. The Borel $\sigma$--fields associated with the non-uniform Skorokhod topologies all coincide with the Kolmogorov $\sigma$--field generated by the projection maps (see Theorem 11.5.2 and Theorem 11.5.3 in Whitt~\cite{Whitt02}), and hence the set $C^{\uparrow}_{k}([0,1], \mathbb{R})$ is measurable with the $M_{i}$ topology for each $k$ ($i=1,2$), and therefore the same holds for $C^{\uparrow}_{0}([0,1], \mathbb{R})$.

In the next sections we will use the following three lemmas.
The first one is about preservation of weak $M_{1}$ convergence of stochastic processes under transformations that add certain c\`adl\`ag functions to the components of the underlying processes. This results is a simple consequence of $M_{1}$ continuity of addition, the continuous mapping theorem and Slutsky's theorem. It can be proven similarly as Lemma 1 in Krizmani\'{c}~\cite{Kr18}. The remaining two lemmas deal with continuity of multiplication and division of two c\`{a}dl\`{a}g functions. The first one is based on Theorem 13.3.2 in Whitt~\cite{Whitt02}, and the second one follows from the fact that for monotone functions $M_{1}$ convergence is equivalent to point-wise convergence in a dense subset of $[0,1]$ including $0$ and $1$. Denote by $\textrm{Disc}(x)$ the set of discontinuity points of $x \in D^{1}$.

\begin{lem}\label{l:weakM1transf}
Let $(A_{n}, B_{n})$, $n=0,1,2,\ldots$, be stochastic processes in $D^{2}$ such that, as $n \to \infty$,
\begin{equation}\label{e:lemM2}
(A_{n}, B_{n}) \dto (A_{0}, B_{0})
\end{equation}
in $D^{2}$ with the weak $M_{1}$ topology. Suppose $x_{n}$ and $y_{n}$, $n=0,1,2,\ldots$, are elements of $D^{1}$ with $x_{0}$ and $y_{0}$ being continuous, such that, as $n \to \infty$,
$$x_{n}(t) \to x_{0}(t) \qquad \textrm{and} \qquad y_{n}(t) \to y_{0}(t) $$
uniformly in $t$. Then
$$ (A_{n} +x_{n}, B_{n} + y_{n}) \dto (A_{0} + x_{0}, B_{0} + y_{0})$$
in $D^{2}$ with the weak $M_{1}$ topology.
\end{lem}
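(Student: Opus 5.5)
The plan is to reduce the statement to a deterministic continuity property and then pass to random elements via the continuous mapping theorem. Since $x_{0},y_{0}$ are deterministic, the process $(A_{n},B_{n},x_{n},y_{n})$ is "almost" jointly convergent. Concretely, consider the deterministic elements $(x_{n},y_{n})$ of $D^{2}$. Uniform convergence to continuous limits means $(x_{n},y_{n}) \to (x_{0},y_{0})$ in the uniform metric, hence in every Skorokhod topology. Viewing the pair $((A_{n},B_{n}),(x_{n},y_{n}))$ as a random element of $D^{2}\times D^{2}$ with the product of weak $M_{1}$ topologies, Slutsky's theorem (the second coordinate converges to a constant) yields
$$ \bigl((A_{n},B_{n}),(x_{n},y_{n})\bigr) \dto \bigl((A_{0},B_{0}),(x_{0},y_{0})\bigr) $$
in that product space. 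Separability of $D^{1}$ with $M_{1}$ makes the product Borel $\sigma$--field work out.

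Next we apply the continuous mapping theorem to the map
$$ h\colon \bigl((a,b),(x,y)\bigr) \mapsto (a+x,\,b+y) $$
from $D^{2}\times D^{2}$ to $D^{2}$. By (\ref{e:defdpM1}), weak $M_{1}$ convergence in $D^{2}$ is the same as componentwise $M_{1}$ convergence in $D^{1}$. It therefore suffices to know that addition $D^{1}\times D^{1}\to D^{1}$, $(a,x)\mapsto a+x$, is $M_{1}$ continuous at every pair $(a,x)$ with $x$ continuous. This is the standard fact (Whitt~\cite{Whitt02}, Theorem 12.7.3: addition is $M_{1}$ continuous at pairs with no common discontinuities of opposite sign, in particular when one summand is continuous). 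Since $x_{0},y_{0}$ are continuous, the limit $((A_{0},B_{0}),(x_{0},y_{0}))$ lies a.s.\ in the continuity set of $h$, and the continuous mapping theorem gives the claim.

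The only delicate step is justifying this continuity directly, if one prefers not to cite Whitt. It goes as follows. Take an $M_{1}$ parametric representation $(r,u)$ of $a_{n}$ close to one $(r_{0},u_{0})$ of $a_{0}$. Then $(r,\,u+x_{n}\circ r)$ is an $M_{1}$ representation of $a_{n}+x_{n}$, since adding a function of time only preserves monotonicity along the graph within jump segments where $r$ is constant. Likewise $(r_{0},\,u_{0}+x_{0}\circ r_{0})$ represents $a_{0}+x_{0}$. Then
$$ \|x_{n}\circ r - x_{0}\circ r_{0}\| \le \|x_{n}-x_{0}\|_{[0,1]} + \omega_{x_{0}}\bigl(\|r-r_{0}\|\bigr), $$
which tends to zero by uniform continuity of $x_{0}$. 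This is exactly where the continuity of $x_{0}$ is needed.
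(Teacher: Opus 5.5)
Your main argument is correct and follows the same route as the paper, which says only that the lemma follows from Slutsky's theorem, the continuous mapping theorem and $M_{1}$ continuity of addition at pairs where one summand is continuous (Whitt, Theorem 12.7.3). That is exactly your argument.

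The optional self-contained justification at the end does not work as written. The lemma assumes only $x_{n}\in D^{1}$, and in the paper's own application $x_{n}(t)=-\lfloor nt\rfloor b_{n}^{(u)}$ is a step function. Suppose $x_{n}$ jumps at $t$.

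\begin{itemize}
\item Then $x_{n}\circ r$ jumps at the first parameter value $s$ with $r(s)=t$, so $(r,\,u+x_{n}\circ r)$ is not continuous.
\item On the set $\{r=t\}$ it traces $[a_{n}(t-),a_{n}(t)]+x_{n}(t)$. It never traces the segment from $a_{n}(t-)+x_{n}(t-)$ to $a_{n}(t)+x_{n}(t)$.
\end{itemize}

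So it does not map onto $\Gamma_{a_{n}+x_{n}}$ and is not an $M_{1}$ parametric representation.

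The fix is to use only the continuous limit in the construction. Since $d_{M_{1}}$ is dominated by the uniform metric,
\[
d_{M_{1}}(a_{n}+x_{n},\,a_{0}+x_{0}) \le \|x_{n}-x_{0}\|_{[0,1]} + d_{M_{1}}(a_{n}+x_{0},\,a_{0}+x_{0}).
\]
For the last term, $(r,\,u+x_{0}\circ r)$ is a genuine $M_{1}$ representation of $a_{n}+x_{0}$ because $x_{0}$ is continuous, so your bound via $\omega_{x_{0}}(\|r-r_{0}\|)$ applies.

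Also, the continuous mapping theorem needs continuity along sequences with $x_{k}\to x_{0}$ only in $M_{1}$, not uniformly. This costs nothing, since $M_{1}$ convergence to a continuous limit is uniform convergence.
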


\begin{lem}\label{l:contmultpl}
Suppose that $x_{n} \to x$ and $y_{n} \to y$ in $D^{1}$ with the $M_{1}$ or $M_{2}$ topology. If for each $t \in \textrm{Disc}(x) \cap \textrm{Disc}(y)$, $x(t)$, $x(t-)$, $y(t)$ and $y(t-)$ are all nonnegative and $[x(t)-x(t-)][y(t)-y(t-)] \geq 0$, then $x_{n}y_{n} \to xy$ in $D^{1}$ with the same topology, where $(xy)(t) = x(t)y(t)$ for $t \in [0,1]$.
\end{lem}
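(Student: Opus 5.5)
The statement to prove is Lemma~\ref{l:contmultpl}. The plan is to reduce it to Whitt's Theorem 13.3.2, which gives $M_1$ continuity of multiplication under exactly this sign condition on common jumps. The $M_2$ case will be handled separately via the Hausdorff characterization (\ref{e:defM2H}) of $d_{M_{2}}$.

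\emph{$M_1$ case.} The proof first matches parametric representations. Since $x_n\to x$ and $y_n\to y$ in $M_1$ separately, the first step is to obtain representations $(r_n,u_n)\in\Pi^{M_1}(x_n)$, $(r,u)\in\Pi^{M_1}(x)$ and $(\rho_n,v_n)\in\Pi^{M_1}(y_n)$, $(\rho,v)\in\Pi^{M_1}(y)$ converging uniformly. One cannot simply multiply $u_n v_n$, because the time components differ.

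The standard trick is as follows. Outside $\textrm{Disc}(x)\cap\textrm{Disc}(y)$, at most one factor jumps. There the product graph is traversed by letting the jumping factor move along its segment while the other stays fixed, so the representations can be reparametrized to share a common time component.

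At a common jump time $t$, we need the segment of $\Gamma_{xy}$ from $x(t-)y(t-)$ to $x(t)y(t)$ to be traversed monotonically. If both segments are traversed simultaneously, with $\lambda\mapsto(x(t-)+\lambda\Delta x)(y(t-)+\lambda\Delta y)$, the product is monotone in $\lambda$. This is because its derivative $\Delta x\,(y(t-)+\lambda\Delta y)+\Delta y\,(x(t-)+\lambda\Delta x)$ has a constant sign when the endpoints are nonnegative and $\Delta x\Delta y\ge0$. So the resulting path is a genuine $M_1$ parametric representation of $xy$.

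Finally, uniform convergence of the product representations follows from uniform convergence of the factors together with uniform boundedness, since $M_1$-convergent sequences are uniformly bounded. This is the content of Whitt's Theorem 13.3.2, which I will simply invoke.

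\emph{$M_2$ case.} Here I would argue through (\ref{e:defM2H}). Take a point $(s,x_n(s)y_n(s))$ of the graph of $x_ny_n$, or a point on one of its jump segments. Then $M_2$ convergence gives points of $\Gamma_x$ and $\Gamma_y$ near $(s,x_n(s))$ and $(s,y_n(s))$. The difficulty is that these points may sit at slightly different times, or at different interior points of jump segments. Away from common jumps this is harmless, by local uniform continuity of the non-jumping factor. Near a common jump $t$, the product of a point of $[x(t-),x(t)]$ and a point of $[y(t-),y(t)]$ lies in the interval spanned by $x(t-)y(t-)$ and $x(t)y(t)$. This holds precisely because all four values are nonnegative and the two jumps have the same sign direction. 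Hence it lies within the segment of $\Gamma_{xy}$ at $t$, and so is close to $\Gamma_{xy}$.

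The reverse inclusion is symmetric. Points of $\Gamma_{xy}$ at a common jump are products $x_\lambda y_\lambda$ along the monotone path, and these are approximated by the products of approximating points of $x_n$ and $y_n$, using the $M_2$ closeness.

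The main obstacle is this common-jump bookkeeping in the $M_2$ case. The key algebraic fact carrying the argument is that the product of the two segments is contained in the segment between the endpoint products. I would also double-check that the uniform bounds on $x_n$ and $y_n$ hold in $M_2$; they do, since the graphs converge in Hausdorff distance.
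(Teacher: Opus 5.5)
Your $M_1$ case matches the paper. The paper gives no argument beyond citing Whitt's Theorem 13.3.2, and that theorem is stated for $M_1$ and $M_2$ alike (hence the wording ``with the same topology''), so the $M_2$ case comes from the same citation. Your separate Hausdorff-metric argument for $M_2$ is therefore unnecessary, and as written it has a gap.

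The forward inclusion ($\Gamma_{x_ny_n}$ lies near $\Gamma_{xy}$) is sound in substance. You start from a single time $s$, and the containment of $[x(t-),x(t)]\cdot[y(t-),y(t)]$ in the interval between $x(t-)y(t-)$ and $x(t)y(t)$ absorbs the fact that the approximants in $\Gamma_x$ and $\Gamma_y$ sit at different times near $t$. To make ``near a common jump'' uniform in $s$, take a common partition $0=t_0<\cdots<t_k=1$ on whose cells both $x$ and $y$ oscillate by less than $\epsilon$. Then let $t$ be the partition point lying between the two approximating times.

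The reverse inclusion is not symmetric to this, and the step ``approximated by the products of approximating points of $x_n$ and $y_n$'' fails as stated. Take $(t,x_\lambda y_\lambda)\in\Gamma_{xy}$ at a common jump. $M_2$ closeness gives $(s_1,a)\in\Gamma_{x_n}$ near $(t,x_\lambda)$ and $(s_2,b)\in\Gamma_{y_n}$ near $(t,y_\lambda)$, but typically $s_1\neq s_2$. Then $ab$ is not a value of $x_ny_n$ at any time, and nothing yet places a point of $\Gamma_{x_ny_n}$ near $(t,ab)$.

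For example, take $x=y=1_{[1/2,1]}$, $x_n=1_{[1/2-1/n,1]}$ and $y_n=1_{[1/2+1/n,1]}$. The approximants of $x_{1/2}=y_{1/2}=1/2$ sit at times $1/2-1/n$ and $1/2+1/n$. The point $(1/2+1/n,1/4)$ does lie on $\Gamma_{x_ny_n}$, but only because that graph is connected, not because $1/4=ab$.

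The missing idea is exactly this connectedness:
\begin{itemize}
\item Pick $\eta>0$ small with $t\pm\eta$ continuity points of both $x$ and $y$.
\item $M_2$ convergence gives $x_n(t\pm\eta)y_n(t\pm\eta)\to x(t\pm\eta)y(t\pm\eta)$, and these limits are close to $x(t-)y(t-)$ and $x(t)y(t)$.
\item The part of $\Gamma_{x_ny_n}$ over $[t-\eta,t+\eta]$ is connected, being the image of an interval under a parametric representation. So its spatial values fill, up to a small error, the whole interval between $x(t-)y(t-)$ and $x(t)y(t)$.
\end{itemize}
This direction does not use the sign condition at all. It is the forward direction that needs it, to rule out overshoot when $x_n$ and $y_n$ make their transitions at slightly different times.
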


\begin{lem}\label{l:M1div}
The function $h \colon D^{1} \times C^{\uparrow}_{0}([0,1], \mathbb{R}) \to D^{1}$ defined by
$h(x,y)= x/y$, where
$$ \Big(\frac{x}{y} \Big)(t) = \frac{x(t)}{y(t)}, \qquad t \in [0,1],$$
is continuous
when $D^{1} \times C^{\uparrow}_{0}([0,1], \mathbb{R})$ is endowed with the weak $M_{i}$ topology and $D^{1}$ is endowed with the standard $M_{i}$ topology for $i=1,2$.
\end{lem}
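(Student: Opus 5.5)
The plan is to reduce continuity of $h$ to pointwise convergence at a dense set of times, using the monotonicity of the denominator. Let $(x_{n},y_{n}) \to (x,y)$ in $D^{1} \times C^{\uparrow}_{0}([0,1],\mathbb{R})$ with the weak $M_{i}$ topology. By the product characterization (\ref{e:defdp}) and (\ref{e:defdpM1}), this means $x_{n} \to x$ and $y_{n} \to y$ separately in $(D^{1}, M_{i})$, with each $y_{n}$ and $y$ continuous, nondecreasing and strictly positive at $0$.

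\textbf{Step 1: uniform convergence of the denominators.} I would show $y_{n} \to y$ uniformly on $[0,1]$. For monotone functions, $M_{1}$ (and also $M_{2}$) convergence is equivalent to pointwise convergence on a dense subset of $[0,1]$ containing $0$ and $1$. Since $y$ is continuous, the convergence $y_{n}(t) \to y(t)$ holds at every $t$. The standard Pólya-type argument then upgrades this to uniform convergence: nondecreasing functions converging pointwise to a continuous limit on a compact interval converge uniformly.

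\textbf{Step 2: a uniform lower bound.} Let $m = y(0) > 0$. By monotonicity, $y_{n}(t) \geq y_{n}(0) \to m$, so for large $n$ we have $\inf_{t} y_{n}(t) \geq m/2$. Hence $1/y_{n} \to 1/y$ uniformly, and $1/y$ is continuous and positive.

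\textbf{Step 3: the product.} It remains to show $x_{n} \cdot (1/y_{n}) \to x/y$ in $M_{i}$. I would argue directly with parametric representations. Since $1/y$ is continuous and strictly positive, the discontinuity sets of $x$ and $x/y$ coincide. Take a parametric representation $(r,u)$ of $\Gamma_{x}$ (for $M_{1}$, nondecreasing as appropriate). Then $(r, u/y(r))$ is a parametric representation of $\Gamma_{x/y}$. This works because at a jump time $t$, the segment $[x(t-),x(t)]$ is mapped by multiplication with the fixed positive scalar $1/y(t)$ onto $[x(t-)/y(t), x(t)/y(t)]$, and monotonicity is preserved, which matters for $M_{1}$.

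Now take representations $(r_{n},u_{n})$ of $x_{n}$ and $(r,u)$ of $x$ with $\|r_{n}-r\| \vee \|u_{n}-u\| \to 0$. For $M_{2}$, use the Hausdorff distance (\ref{e:defM2H}) in the same way. Then
\[
\Big\| \frac{u_{n}}{y_{n}(r_{n})} - \frac{u}{y(r)} \Big\| \leq \frac{\|u_{n}-u\|}{\inf y_{n}} + \|u\| \, \Big\| \frac{1}{y_{n}(r_{n})} - \frac{1}{y(r)} \Big\|.
\]
The first term tends to $0$ by Step 2, and $\|u\|$ is finite because $x$ is bounded. For the second term, split
\[
\Big|\frac{1}{y_{n}(r_{n})} - \frac{1}{y(r)}\Big| \leq \Big|\frac{1}{y_{n}(r_{n})} - \frac{1}{y(r_{n})}\Big| + \Big|\frac{1}{y(r_{n})} - \frac{1}{y(r)}\Big|.
\]
The first piece vanishes by the uniform convergence of Step 2. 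The second vanishes by uniform continuity of $1/y$ together with $\|r_{n}-r\| \to 0$. Taking infima over representations gives convergence in the standard $M_{i}$ metric on $D^{1}$.

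Alternatively, Step 3 follows from Lemma~\ref{l:contmultpl} applied to $x_{n}$ and $1/y_{n}$. Here $\textrm{Disc}(1/y) = \emptyset$, so its sign hypothesis is vacuous, and uniform convergence implies $M_{i}$ convergence.

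\textbf{Main obstacle.} The main obstacle is Step 1, specifically the passage from $M_{i}$ convergence to pointwise convergence everywhere for the monotone continuous limit. I would justify it via the cited equivalence for monotone functions, or directly: $M_{2}$ convergence to a continuous limit is equivalent to uniform convergence, since graphs close in Hausdorff distance to the graph of a uniformly continuous function force sup-norm closeness.
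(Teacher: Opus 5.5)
Your proof is correct, but it takes a different route from the paper's. The paper never passes to uniform convergence. For $M_{1}$ it applies Whitt's characterization of $M_{1}$ convergence of monotone functions (pointwise convergence on a dense set containing $0$ and $1$) twice. The first application gives $y_{n}(t)\to y(t)$ on such a set. The second, since $1/y_{n}$ and $1/y$ are again monotone, gives $1/y_{n}\to 1/y$ in $M_{1}$. For $M_{2}$ the paper uses the complete metric $d_{M_{1}}^{*}=d_{M_{2}}+\lambda(\widehat{\omega}(x_{1},\cdot),\widehat{\omega}(x_{2},\cdot))$. Its oscillation term vanishes on monotone functions, which reduces the $M_{2}$ case to the $M_{1}$ case. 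Both cases end with Lemma~\ref{l:contmultpl}, exactly as in your alternative Step 3.

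You instead exploit the continuity of $y$. $M_{i}$ convergence to a continuous limit is uniform convergence; your direct Hausdorff argument via (\ref{e:defM2H}) is the cleanest justification, and it covers $M_{1}$ too. Then $y_{n}\ge y_{n}(0)\ge y(0)/2$ makes $1/y_{n}\to 1/y$ uniform. What your route buys:
\begin{itemize}
\item It treats $i=1,2$ at once and bypasses the $d_{M_{1}}^{*}$ machinery. The $M_{2}$ half of the monotone characterization you quote in Step 1 is precisely what the paper proves with that machinery, so it is good that your direct argument makes that citation unnecessary.
\item With your parametric-representation version of Step 3, it does not even need Lemma~\ref{l:contmultpl}.
\item It shows monotonicity is inessential. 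Replacing $y_{n}\ge y_{n}(0)$ by $y_{n}\ge \min_{t}y(t)-\|y_{n}-y\|_{[0,1]}$ gives continuity of $h$ on $D^{1}\times\{y\in C([0,1]) : \min_{t}y(t)>0\}$.
\end{itemize}
What the paper's route buys in exchange is the fact that $M_{1}$ and $M_{2}$ convergence coincide for monotone functions, whether or not they are continuous. That is exactly what Corollary~\ref{c:M2cont} reuses, for a monotone component that may jump.

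A minor remark: in Step 3 you need not fix one representation of $x$. Representations depending on $n$ work equally well, since your estimate only uses $\|x\|_{[0,1]}$ and the uniform continuity of $1/y$.
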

\begin{proof}
Take an arbitrary $(x,y) \in  D^{1} \times C^{\uparrow}_{0}([0,1], \mathbb{R})$, and assume first that $(x_{n}, y_{n}) \to (x,y)$ in $D^{1} \times C^{\uparrow}_{0}([0,1], \mathbb{R})$ with respect to the weak $M_{1}$ topology, i.e.
\begin{equation*}\label{e:M1dense}
d_{p}^{M_{1}}((x_{n},y_{n}), (x,y)) =
\max \{ d_{M_{1}}(x_{n}, x), d_{M_{1}}(y_{n}, y)) \to 0 \qquad \textrm{as} \ n \to \infty.
\end{equation*}
In order for $f$ to be continuous we have to show that
$h(x_{n}, y_{n}) \to h(x,y)$ in $D^{1}$ with respect to the $M_1$ topology. Using the fact that for monotone functions $M_{1}$ convergence is equivalent to point-wise convergence in a dense subset of $[0,1]$ including $0$ and $1$ (see Whitt~\cite{Whitt02}, Corollary 12.5.1), from
$d_{M_{1}}(y_{n}, y) \to 0$ we obtain $y_{n}(t) \to y(t)$ for all $t$ in a dense subset of $[0,1]$ including $0$ and $1$. Note that for all such $t$ it holds also
$$ \frac{1}{y_{n}(t)} \to \frac{1}{y(t)} \qquad \textrm{as} \ n \to \infty.$$
Since the functions $1/y$ and $1/y_{n}$ for all $n \in \mathbb{N}$ are monotone, a new application of the above mentioned characterization of $M_{1}$ convergence for monotone functions yields $1/y_{n} \to 1/y$ in $D^{1}$ with the $M_{1}$ topology. Note that $1/y$ is a continuous function, and hence we can apply Lemma~\ref{l:contmultpl} to obtain
$ x_{n}/y_{n} \to x/y$, i.e. $h(x_{n}, y_{n}) \to h(x,y)$ as $n \to \infty$, in $D^{1}$ with the $M_{1}$ topology. Therefore the function $h$ is continuous at $(x,y)$ when $D^{1} \times C^{\uparrow}_{0}([0,1], \mathbb{R})$ is endowed with the weak $M_{1}$ topology and $D^{1}$ is endowed with the standard $M_{1}$ topology.

 We continue the proof by showing the same for the $M_{2}$ topology. Let $(x_{n}, y_{n}) \to (x,y)$ in $D^{1} \times C^{\uparrow}_{0}([0,1], \mathbb{R})$ with respect to the weak $M_{2}$ topology, i.e.
\begin{equation}\label{e:M2dense}
d_{p}^{M_{2}}((x_{n},y_{n}), (x,y)) =
\max \{ d_{M_{2}}(x_{n}, x), d_{M_{2}}(y_{n}, y)) \to 0 \qquad \textrm{as} \ n \to \infty.
\end{equation}
By Remark 12.8.1 in Whitt~\cite{Whitt02} the following metric is a complete metric topologically equivalent to $d_{M_{1}}$:
$$ {d_{M_{1}}^{*}}(x_{1}, x_{2}) = d_{M_{2}}(x_{1}, x_{2}) + \lambda (\widehat{\omega}(x_{1},\cdot), \widehat{\omega}(x_{2},\cdot)),$$
where $\lambda$ is the L\'{e}vy metric on a space of distributions
$$ \lambda (F_{1},F_{2}) = \inf \{ \epsilon >0 : F_{2}(x-\epsilon) - \epsilon \leq F_{1}(x) \leq F_{2}(x+\epsilon) + \epsilon \ \ \textrm{for all} \ x \},$$
and
$$ \widehat{\omega}(x,z) = \left\{ \begin{array}{cc}
                                   \omega(x,e^{z}), & \quad z<0,\\[0.4em]
                                   \omega(x,1), & \quad z \geq 0,
                                 \end{array}\right.$$
with
$$ \omega (x,\delta) = \sup_{0 \leq t \leq 1} \ \sup_{0 \vee (t-\delta) \leq t_{1} < t_{2} < t_{3} \leq (t+\delta) \wedge 1} \{\| x(t_{2}) - [x(t_{1}), x(t_{3})] \| \}$$
for $x \in D^{1}$ and $\delta >0$.
Here $\|z-A\|$ denotes the distance between a point $z$ and a subset $A \subseteq \mathbb{R}$.
Since $y$ and $y_{n}$ are monotone functions, for $t_{1} < t_{2} < t_{3}$ it holds that
$ \|y(t_{2}) - [y(t_{1}), y(t_{3})] \|=0$, which yields $\omega(y, \delta)=0$ for all $\delta>0$, and similarly $\omega(y_{n}, \delta)=0$. Hence $\lambda (y_{n}, y)=0$, and $d_{M_{1}}^{*}(y_{n}, y) = d_{M_{2}}(y_{n}, y)$. Now from (\ref{e:M2dense}) we obtain
$ d_{M_{1}}^{*}(y_{n}, y) \to 0$ as $n \to \infty$, that is, $y_{n} \to y$ in $D^{1}$ with the $M_{1}$ topology. By the first part of the proof this implies $1/y_{n} \to 1/y$ in $D^{1}$ with the $M_{1}$ topology, and since the functions $1/y$ and $1/y_{n}$ are monotone, we obtain
$$ d_{M_{2}}(1/y_{n}, 1/y) =d_{M_{1}}^{*}(1/y_{n}, 1/y) \to 0 \qquad \textrm{as} \ n \to \infty.$$
Finally, a new application of Lemma~\ref{l:contmultpl} yields $h(x_{n}, y_{n}) \to h(x,y)$ as $n \to \infty$, in $D^{1}$ with the $M_{2}$ topology, which means that $h$ is also continuous when $D^{1} \times C^{\uparrow}_{0}([0,1], \mathbb{R})$ is endowed with the weak $M_{2}$ topology and $D^{1}$  with the standard $M_{2}$ topology.
\end{proof}

For $a \in \mathbb{R}$ denote by $\widehat{a}$ the constant function in $D^{1}$ defined by $\widehat{a}(t)=a$ for all $t \in [0,1]$. Let $D_{m}([0,1], \mathbb{R})$ be the subset of functions $x$ in $D^{1}$ that are monotone with $x(0) \geq 0$. This set is a measurable subset of $D^{1}$ with the Skorokhod $M_{1}$ and $M_{2}$ topologies (see Lemma 13.2.3 in Whitt~\cite{Whitt02}).

\begin{cor}\label{c:M2cont}
The function $f \colon D^{1} \times D_{m}([0,1], \mathbb{R}) \to D^{1} \times D_{m}([0,1], \mathbb{R}) $ defined by
$$ f(x,y) = (x, \widehat{y(1)}), \qquad (x, y) \in D^{1} \times D_{m}([0,1], \mathbb{R}),$$
is continuous when $D^{1} \times D_{m}([0,1], \mathbb{R})$ is endowed with the weak $M_{2}$ topology.
\end{cor}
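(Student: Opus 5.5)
Since the weak $M_{2}$ topology on $D^{1} \times D_{m}([0,1], \mathbb{R})$ is the product topology induced by $d_{p}^{M_{2}}$ in (\ref{e:defdp}), and the first component of $f$ is the identity, it suffices to show that the map $y \mapsto \widehat{y(1)}$ is continuous from $D_{m}([0,1], \mathbb{R})$ with the $M_{2}$ topology into $D^{1}$ with the $M_{2}$ topology. Note first that $\widehat{y(1)}$ is a constant, hence monotone, function with value $y(1) \geq 0$ when $y$ is nondecreasing (and we must check the sign convention when $y$ is nonincreasing with $y(0)\ge 0$; if $y(1)<0$ is possible the codomain statement should be read with $D_{m}$ interpreted accordingly, but the continuity argument below is unaffected). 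So take $y_{n} \to y$ in $D_{m}([0,1], \mathbb{R})$ with $d_{M_{2}}(y_{n},y) \to 0$.

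The key step is to upgrade $M_{2}$ convergence to $M_{1}$ convergence for monotone functions, exactly as in the proof of Lemma~\ref{l:M1div}. For monotone $x$ one has $\omega(x,\delta)=0$ for every $\delta>0$, since for $t_{1}<t_{2}<t_{3}$ the value $x(t_{2})$ lies between $x(t_{1})$ and $x(t_{3})$. Hence $\widehat{\omega}(y_{n},\cdot)=\widehat{\omega}(y,\cdot)\equiv 0$, so the L\'evy distance term vanishes and $d_{M_{1}}^{*}(y_{n},y)=d_{M_{2}}(y_{n},y) \to 0$. By Remark 12.8.1 in Whitt~\cite{Whitt02}, $d_{M_{1}}^{*}$ is topologically equivalent to $d_{M_{1}}$, so $y_{n} \to y$ in the $M_{1}$ topology. (A minor point to handle: the $y_{n}$ and $y$ might be monotone in different directions; for $n$ large this is harmless since the argument for $\omega$ only uses monotonicity of each function separately.)

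Next, $M_{1}$ convergence implies convergence at the endpoint $t=1$: indeed, $M_{1}$ parametric representations satisfy $r(1)=1$ and $u(1)=x(1)$, so $|y_{n}(1)-y(1)| \leq \|u_{n}-u\|_{[0,1]}$ for any choice of representations, giving $|y_{n}(1)-y(1)| \leq d_{M_{1}}(y_{n},y) \to 0$ (alternatively invoke Corollary 12.5.1 of Whitt~\cite{Whitt02}, which includes the point $1$ in the dense set of pointwise convergence). Finally, for constant functions $d_{M_{2}}(\widehat{a},\widehat{b}) \leq |a-b|$, as is immediate from the Hausdorff characterization (\ref{e:defM2H}), since the graphs are horizontal segments at heights $a$ and $b$. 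Thus $d_{M_{2}}(\widehat{y_{n}(1)},\widehat{y(1)}) \to 0$, and combining with the first coordinate, $d_{p}^{M_{2}}(f(x_{n},y_{n}),f(x,y)) \to 0$.

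The main obstacle is the step showing that the right endpoint value is $M_{2}$-continuous, since in general $M_{2}$ convergence does not control $y(1)$ well by itself; the monotonicity trick via $d^{*}_{M_{1}}$ resolves it. Actually even directly, $M_{2}$ parametric representations require $u(1)=x(1)$ and $r(1)=1$, so one could bypass $M_{1}$ entirely, and I would present that as a backup argument.
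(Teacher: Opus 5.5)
Your argument is correct and is essentially the paper's proof: upgrade $d_{M_2}(y_n,y)\to 0$ to $M_1$ convergence of the monotone second coordinates via $d^{*}_{M_1}$ as in Lemma~\ref{l:M1div}, deduce $y_n(1)\to y(1)$ from $M_1$ convergence, and bound $d_{M_2}(\widehat{y_n(1)},\widehat{y(1)})$ by $|y_n(1)-y(1)|$. Your remark on the codomain is also a fair point about the statement: $\widehat{y(1)}$ can fall outside $D_{m}([0,1],\mathbb{R})$ when $y$ is nonincreasing with $y(1)<0$, although this does not affect continuity. Keep the $M_1$ route rather than your ``backup'': with the Hausdorff form (\ref{e:defM2H}) of $d_{M_2}$, the functions $x_n=1_{[1-2/n,1-1/n)}$ and $x=1_{\{1\}}$ satisfy $d_{M_2}(x_n,x)\le 2/n$ even though $x_n(1)=0\neq 1=x(1)$. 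So what secures $y_n(1)\to y(1)$ is the monotonicity of the $y_n$, not the endpoint clause $u(1)=x(1)$ in the parametric definition.
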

\begin{proof}
Take an arbitrary $(x,y) \in  D^{1} \times D_{m}([0,1], \mathbb{R})$, and assume that $(x_{n}, y_{n}) \to (x,y)$ in $D^{1} \times D_{m}([0,1], \mathbb{R})$ with respect to the weak $M_{2}$ topology. This implies $y_{n} \to y$ in $D^{1}$ with the $M_{2}$ topology, and hence as in the proof of Lemma~\ref{l:M1div} we obtain $y_{n} \to y$ in $D^{1}$ with the $M_{1}$ topology. Since $M_{1}$ convergence implies pointwise convergence at the endpoints, in particular at $t=1$ (see Whitt~\cite{Whitt02}, Theorem 12.5.1), $y_{n}(1) \to y(1)$ as $n \to \infty$. Therefore using the fact that the metric $d_{M_{2}}$ is bounded above by the uniform metric (see Theorem 12.10.3 in Whitt~\cite{Whitt02}) we have
$$ d_{M_{2}}(\widehat{y_{n}(1)}, \widehat{y(1)}) \leq \|\widehat{y_{n}(1)} - \widehat{y(1)}\|_{[0,1]} = |y_{n}(1)-y(1)| \to 0 \qquad \textrm{as} \ n \to \infty.$$
This, together with $d_{M_{2}}(x_{n}, x) \to 0$, yields $d_{p}^{M_{2}}((x_{n}, \widehat{y_{n}(1)}), (x, \widehat{y(1)})) \to 0$ as $n \to \infty$, and hence we conclude that $f$ is continuous at $(x,y)$ with respect to the weak $M_{2}$ topology.
\end{proof}

\section{Joint regular variation, point processes and dependence conditions}
\label{S:pomocno}

We say that a strictly stationary sequence of random variables $(\xi_{n})_{n \in \mathbb{Z}}$ is (jointly) regularly varying with index
$\alpha >0$ if for any nonnegative integer $k$ the
$k$-dimensional random vector $\xi = (\xi_{1}, \ldots , \xi_{k})$ is
multivariate regularly varying with index $\alpha$, i.e.\ there
exists a random vector $\Theta$ on the unit sphere
$\mathbb{S}^{k-1} = \{ x \in \mathbb{R}^{k} : \|x\|=1 \}$ such
that for every $u >0$, as $x \to \infty$,
 \begin{equation}\label{e:regvar1}
   \frac{\Pr(\|\xi\| > ux,\,\xi / \| \xi \| \in \cdot \, )}{\Pr(\| \xi \| >x)}
    \wto u^{-\alpha} \Pr( \Theta \in \cdot \,),
 \end{equation}
where the arrow ''$\wto$'' denotes the weak convergence of finite measures.
There is a convenient characterization of joint regular variation due to Basrak and Segers~\cite{BaSe}:~it is necessary and
sufficient that there exists a process $(Y_n)_{n \in \mathbb{Z}}$
with $\Pr(|Y_0| > y) = y^{-\alpha}$ for $y \geq 1$ such that, as $x
\to \infty$,
\begin{equation}\label{e:tailprocess1}
  \bigl( (x^{-1}\ \xi_n)_{n \in \mathbb{Z}} \, \big| \, | \xi_0| > x \bigr)
  \fidi (Y_n)_{n \in \mathbb{Z}},
\end{equation}
where "$\fidi$" denotes convergence of finite-dimensional
distributions. The process $(Y_{n})$ is called
the tail process of $(\xi_{n})$.

Let $(Z_{i})_{i \in \mathbb{Z}}$ be a strictly stationary and strongly mixing sequence of regularly varying random variables with index $\alpha \in (0,2)$, such that the local dependence condition $D'$ and conditions $(\ref{e:oceknula})$ and $(\ref{e:sim})$ hold.
Condition $D'$ and strong mixing imply that the extremes of the sequence $(Z_{i})$ are isolated, i.e.~$\theta=1$ (see Leadbetter and Rootz\'{e}n~\cite{LeRo88}, page 439, and Leadbetter et al.~\cite{LLR83}, Theorem 3.4.1). They also imply that $(Z_{i})$ is jointly regularly varying with the tail process $(Y_{i})$ being the same as in the i.i.d.~case, that is, $Y_{i}=0$ for $i \neq 0$, and $Y_{0}$  as described above (Basrak et al.~\cite{BKS}, Example 4.1). This in particular means that $(Y_{i})$ has no two values of the opposite sign.

Define the time-space point processes
\begin{equation*}\label{E:ppspacetime}
 N_{n} = \sum_{i=1}^{n} \delta_{(i / n,\,Z_{i} / a_{n})} \qquad \textrm{for all} \ n\in \mathbb{N},
\end{equation*}
with $a_{n}$ as in (\ref{eq:niz}). The point process convergence for the sequence $(N_{n})$ on the space $[0,1] \times \EE$ was obtained by Basrak and Tafro~\cite{BaTa16} under joint regular variation and the following two weak dependence conditions.

\begin{cond}\label{c:mixcond1}
There exists a sequence of positive integers $(r_{n})$ such that $r_{n} \to \infty $ and $r_{n} / n \to 0$ as $n \to \infty$ and such that for every nonnegative continuous function $f$ on $[0,1] \times \mathbb{E}$ with compact support, denoting $k_{n} = \lfloor n / r_{n} \rfloor$, as $n \to \infty$,
\begin{equation}\label{e:mixcon}
 \E \biggl[ \exp \biggl\{ - \sum_{i=1}^{n} f \biggl(\frac{i}{n}, \frac{Z_{i}}{a_{n}}
 \biggr) \biggr\} \biggr]
 - \prod_{k=1}^{k_{n}} \E \biggl[ \exp \biggl\{ - \sum_{i=1}^{r_{n}} f \biggl(\frac{kr_{n}}{n}, \frac{Z_{i}}{a_{n}} \biggr) \biggr\} \biggr] \to 0.
\end{equation}
\end{cond}

\begin{cond}\label{c:mixcond2}
There exists a sequence of positive integers $(r_{n})$ such that $r_{n} \to \infty $ and $r_{n} / n \to 0$ as $n \to \infty$ and such that for every $u > 0$,
\begin{equation}
\label{e:anticluster}
  \lim_{m \to \infty} \limsup_{n \to \infty}
  \Pr \biggl( \max_{m \leq |i| \leq r_{n}} | Z_{i} | > ua_{n}\,\bigg|\,| Z_{0}|>ua_{n} \biggr) = 0.
\end{equation}
\end{cond}
Condition~\ref{c:mixcond1} is implied by the strong mixing property (see Krizmani\'{c}~\cite{Kr10},~\cite{Kr16}). Condition~\ref{c:mixcond2} follows from condition $D'$, for the latter implies
\[
    \lim_{n \to \infty} n \sum_{i=1}^{r_{n}} \Pr \bigg( \frac{|Z_{0}|}{a_{n}} > u,
    \frac{|Z_{i}|}{a_{n}} > u \bigg) = 0 \quad \textrm{for all} \ u
    >0,
\]
for any sequence of positive integers $(r_{n})$ such that $r_{n} \to \infty$ and $r_{n} / n \to 0$ as $n \to \infty$.
Therefore, in our case, by Theorem 3.1 in Basrak and Tafro~\cite{BaTa16}, as $n \to \infty$,
\begin{equation}\label{e:BaTa}
N_{n} \dto N = \sum_{i}\sum_{j}\delta_{(T_{i}, P_{i}\eta_{ij})}
\end{equation}
in $[0,1] \times \EE$, where $\sum_{i=1}^{\infty}\delta_{(T_{i}, P_{i})}$ is a Poisson process on $[0,1] \times (0,\infty)$
with intensity measure $Leb \times \nu$ where $\nu(\rmd x) = \alpha
x^{-\alpha-1}1_{(0,\infty)}(x)\,\rmd x$, and $(\sum_{j= 1}^{\infty}\delta_{\eta_{ij}})_{i}$ is an i.i.d.~sequence of point processes in $\EE$ independent of $\sum_{i}\delta_{(T_{i}, P_{i})}$ and with common distribution equal to the distribution of $\sum_{j}\delta_{\widetilde{Y}_{j}/L(\widetilde{Y})}$, where $L(\widetilde{Y})= \sup_{j \in \mathbb{Z}}|\widetilde{Y}_{j}|$ and $\sum_{j}\delta_{\widetilde{Y}_{j}}$ is distributed as $( \sum_{j \in \mathbb{Z}} \delta_{Y_j} \,|\, \sup_{i \le -1} | Y_i| \le 1).$ Taking into account the form of the tail process $(Y_{i})$ it holds that $N=\sum_{i}\delta_{(T_{i}, P_{i}\eta_{i0})}$ with $|\eta_{i0}|=1$. Further, by (\ref{eq:pq}) and (\ref{e:tailprocess1}) we obtain
\begin{eqnarray*}
\Pr(\eta_{i0}=1) &= &\Pr(Y_{0}>0)=\Pr(Y_{0}>1) = \lim_{x \to \infty}  \Pr \bigl(x^{-1}Z_{0}>1 \, \big| \, | Z_{0}| > x \bigr)\\[0.5em]
  &=& \lim_{x \to \infty} \frac{\Pr(Z_0 > x)}{\Pr(|Z_0| > x)}=p,
\end{eqnarray*}
and similarly $\Pr(\eta_{i0}=-1)=r$. Hence, denoting $Q_{i}=\eta_{i0}$, the limiting point process in relation (\ref{e:BaTa}) reduces to
\begin{equation}\label{e:BaTa1}
 N = \sum_{i}\delta_{(T_{i}, P_{i}Q_{i})},
\end{equation}
with $\Pr(Q_{i}=1)=p$ and $\Pr(Q_{i}=-1)=r$. Since the sequence $(Q_{i})$ is independent of the Poisson process $\sum_{i=1}^{\infty}\delta_{(T_{i}, P_{i})}$, an application of Proposition 5.2 and Proposition 5.3 in Resnick~\cite{Resnick07} yields that $N$ is a Poisson process with intensity measure $Leb \times \nu'$ where
\begin{eqnarray*}
  \nu'(\rmd x) &=& \{ \mathrm{E}[Q_{1}^{\alpha} 1_{\{ Q_{1}>0 \}}] 1_{(0,\infty)}(x) + \mathrm{E}[(-Q_{1})^{\alpha} 1_{\{ Q_{1}<0 \}}] 1_{(-\infty,0)}(x) \} \alpha
|x|^{-\alpha-1}\,\rmd x \\[0.6em]
   &=&  (p 1_{(0,\infty)}(x)+r 1_{(-\infty,0)}(x)) \alpha
|x|^{-\alpha-1}\,\rmd x\\[0.6em]
&=& \mu(\rmd x).
\end{eqnarray*}

\section{Joint functional convergence for partial sums of linear processes}
\label{S:jointconv}

Fix $0 < u < \infty$ and define the summation functional
$ \Phi^{(u)} \colon \mathbf{M}_{p}([0,1] \times \EE) \to D^{2}$
by
$$ \Phi^{(u)} \Big( \sum_{i}\delta_{(t_{i}, x_{i})} \Big) (t)
  =  \Big( \sum_{t_{i} \leq t}x_{i}\,1_{\{u < |x_{i}| < \infty \}},  \sum_{t_{i} \leq t} x_{i}^{2}\,1_{\{u < |x_{i}| < \infty \}}  \Big), \qquad t \in [0,1],$$
  where the space $\mathbf{M}_p([0,1] \times \EE)$ of Radon point
measures on $[0,1] \times \EE$ is equipped with the vague
topology (see Resnick~\cite{Re87}, Chapter 3). Let $\Lambda = \Lambda_{1} \cap \Lambda_{2}$, where
\begin{multline*}
 \Lambda_{1} =
 \{ \eta \in \mathbf{M}_{p}([0,1] \times \EE) :
   \eta ( \{0,1 \} \times \EE) = 0 = \eta ([0,1] \times \{ \pm \infty, \pm u \}) \}, \\[1em]
 \shoveleft \Lambda_{2} =
 \{ \eta \in \mathbf{M}_{p}([0,1] \times \EE) :
  \eta ( \{ t \} \times (u, \infty]) \cdot \eta ( \{ t \} \times [-\infty,-u)) = 0 \\
  \text{for all $t \in [0,1]$} \}.
\end{multline*}
The elements
of $\Lambda_2$ have the property that atoms in $[0,1] \times \{ x
\in \EE : |x| >u \} $ with the same time
coordinate are all on the same side of the time axis.
The next lemma shows that
$\Phi^{(u)}$ is continuous on the set $\Lambda$.

\begin{lem}\label{l:contfunct}
The summation functional $\Phi^{(u)} \colon \mathbf{M}_{p}([0,1]
\times \EE) \to D^{2}$ is continuous on the set $\Lambda$
when $D^{2}$ is endowed with the weak $M_{1}$ topology.
\end{lem}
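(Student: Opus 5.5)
Take $\eta \in \Lambda$ and a sequence $\eta_n \to \eta$ vaguely in $\mathbf{M}_p([0,1]\times\EE)$. By (\ref{e:defdpM1}), convergence in the weak $M_1$ topology on $D^2$ is the same as $M_1$ convergence of each coordinate in $D^1$. So it is enough to prove separately that
$$\Phi^{(u)}_1(\eta_n) \to \Phi^{(u)}_1(\eta) \quad\text{and}\quad \Phi^{(u)}_2(\eta_n) \to \Phi^{(u)}_2(\eta)$$
in $(D^1, M_1)$. We do not need to handle the two coordinates jointly with a common parametrization, and this is exactly why the weak topology suffices.

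Since $\eta \in \Lambda_1$, the set $K=[0,1]\times\{x: |x|\ge u\}$ is compact in $[0,1]\times\EE$. Its boundary is not charged by $\eta$, because $\eta$ gives no mass to $\{0,1\}\times\EE$ or to $[0,1]\times\{\pm u,\pm\infty\}$. By the standard result on vague convergence of point measures (Resnick~\cite{Re87}, Proposition 3.13) there are then finite numbers $k$, and for all large $n$ a labelling of the atoms, such that
$$\eta_n|_{K}=\sum_{i=1}^{k}\delta_{(t_i^{(n)},x_i^{(n)})}, \qquad \eta|_{K}=\sum_{i=1}^{k}\delta_{(t_i,x_i)},$$
with $(t_i^{(n)},x_i^{(n)})\to(t_i,x_i)$ for each $i$. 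All the $t_i$ lie in $(0,1)$ and all $|x_i|\in(u,\infty)$. Hence, for large $n$, the points $x_i^{(n)}$ are the only atoms of $\eta_n$ with $|x|>u$, and both coordinates of $\Phi^{(u)}(\eta_n)$ are finite step functions built from these $k$ atoms.

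\emph{Second coordinate.} Here $\Phi^{(u)}_2(\eta_n)(t)=\sum_{t_i^{(n)}\le t}(x_i^{(n)})^2$. This is a nondecreasing step function whose jump sizes and jump times converge to those of the nondecreasing limit $\Phi^{(u)}_2(\eta)$. Therefore it converges pointwise at every continuity point of the limit and at $t=0,1$ (at $t=1$ because $t_i<1$). By Corollary 12.5.1 in Whitt~\cite{Whitt02} (for monotone functions, $M_1$ convergence is equivalent to pointwise convergence on a dense set containing $0$ and $1$), we obtain $M_1$ convergence. No condition on signs is needed for this coordinate.

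\emph{First coordinate.} This is the main obstacle. Atoms $t_i$ of $\eta$ that coincide may come from atoms $t_i^{(n)}$ that are distinct but approach each other. Then $\Phi^{(u)}_1(\eta_n)$ makes several consecutive jumps close together, while the limit makes one jump of the summed size. $M_1$ convergence is possible only if this intermediate path is monotone, that is, if the merging jumps all have the same sign. This is precisely what $\eta\in\Lambda_2$ guarantees, since the jumps $x_i$ with a common time $t$ all lie on the same side of zero; for large $n$ the $x_i^{(n)}$ then share that sign.

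To verify $M_1$ convergence concretely, I would construct parametric representations. Group the distinct limit times $s_1<\dots<s_m$, and for each $s_j$ take a small interval around it which for large $n$ contains exactly the $t_i^{(n)}$ with $t_i=s_j$. Inside this interval, parametrize the monotone staircase of $\Phi^{(u)}_1(\eta_n)$ and, simultaneously, the vertical segment of the limit's completed graph at $s_j$. Outside the intervals, both functions are constant with values that converge. The uniform distance between the two representations is then bounded by $\max_j$ of the interval width plus $\sum_i|x_i^{(n)}-x_i|$, which tends to $0$. Alternatively, I would cite Whitt's characterization via the oscillation function: $M_1$ convergence follows from convergence on a dense set including $0$ and $1$ together with $\lim_{\delta\to0}\limsup_n w_s(\Phi_1^{(u)}(\eta_n),\delta)=0$, and the monotonicity of each jump cluster gives the oscillation bound. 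Together the two coordinates give continuity of $\Phi^{(u)}$ at $\eta$.
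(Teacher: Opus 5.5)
Your proposal is correct and follows the paper's route. The paper also reduces weak $M_1$ convergence to $M_1$ convergence of each coordinate via the product metric $d_p^{M_1}$, and then runs the argument of Lemma 3.2 in Basrak et al.~\cite{BKS} on each coordinate; your write-up fills in the details the paper only cites (Resnick's Proposition 3.13 for the finitely many atoms with $|x|>u$, and $\Lambda_2$ for same-sign clusters in the first coordinate). Handling the second coordinate through the monotone-function criterion of Whitt~\cite{Whitt02}, Corollary 12.5.1, is a slightly more direct shortcut, but it is the same proof in substance.
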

\begin{proof}
For an arbitrary $\eta \in \Lambda$ suppose that $\eta_{n} \vto \eta$ in $\mathbf{M}_p([0,1] \times
\EE)$. We need to show that
$\Phi^{(u)}(\eta_n) \to \Phi^{(u)}(\eta)$ in $D^{2}$ with respect to the weak $M_1$ topology. By
Theorem 12.5.2 in Whitt~\cite{Whitt02}, it suffices to prove that,
as $n \to \infty$,
$$ d_{p}^{M_{1}}(\Phi^{(u)}(\eta_{n}), \Phi^{(u)}(\eta)) =
\max_{k=1, 2}d_{M_{1}}(\Phi^{(u)}_{k}(\eta_{n}),
\Phi^{(u)}_{k}(\eta)) \to 0.$$
Following, with small modifications, the lines in the proof of Lemma 3.2 in Basrak et al.~\cite{BKS} for each coordinate separately, we obtain
$d_{M_{1}}(\Phi^{(u)}_{1}(\eta_{n}), \Phi^{(u)}_{1}(\eta)) \to 0$ and
$d_{M_{1}}(\Phi^{(u)}_{2}(\eta_{n}), \Phi^{(u)}_{2}(\eta)) \to 0$
as
$n \to \infty$. Therefore we conclude that $\Phi^{(u)}$ is continuous at $\eta$.
\end{proof}

Let $(Z_{i})_{i \in \mathbb{Z}}$ be a strictly stationary sequence of regularly varying random variables with index $\alpha \in (0,2)$, and let
 $(C_{i})_{i \geq 0 }$ be a sequence of random variables independent of $(Z_{i})$ such that the series defying the linear process
 $$ X_{i} = \sum_{j=0}^{\infty}C_{j}Z_{i-j}, \qquad i \in \mathbb{Z},$$
  is a.s.~convergent. Let $C=\sum_{i=0}^{\infty}C_{i}$ and $C^{S}=\sum_{i=0}^{\infty}C_{i}^{2}$, and assume $C$ and $C^{S}$ are a.s.~finite. For instance, this is implied by condition (\ref{e:momcond}), since
  $$ \mathrm{E}|C|^{\delta} \leq \mathrm{E} \Big( \sum_{j=0}^{\infty} |C_{j}| \Big)^{\delta} \leq \mathrm{E} \Big( \sum_{j=0}^{\infty} |C_{j}|^{\delta} \Big) = \sum_{j=0}^{\infty} \mathrm{E} |C_{j}|^{\delta} < \infty$$
  yields $|C| < \infty$ a.s. The previous relation further implies that for almost every $\omega \in \Omega$, with $(\Omega, \mathcal{F}, \Pr)$ being the underlying probability space, it holds that $\sum_{j=0}^{\infty}|C_{j}(\omega)| < \infty$. Hence there exists an integer $j_{0}=j_{0}(\omega) \geq 0$ such that $|C_{j}(\omega)| \leq 1$ for all $j \geq j_{0}$. This leads to $C_{j}^{2}(\omega) \leq |C_{j}(\omega)|$ for $j \geq j_{0}(\omega)$, which implies
 $$ \sum_{j=j_{0}}^{\infty} C_{j}^{2}(\omega) \leq \sum_{j=j_{0}}^{\infty} |C_{j}(\omega)| < \infty,$$
 i.e.~$C^{S}(\omega) < \infty$.
  For $n \in \mathbb{N}$ define
$$ L_{n}(t) =  \bigg( \sum_{k=1}^{[nt]} \frac{X_{k}}{a_{n}}, \sum_{k=1}^{\lfloor nt \rfloor}\frac{X_{k}^{2}}{a_{n}^{2}} \bigg),
   \qquad t \in [0,1],$$
with $a_{n}$ as in (\ref{eq:niz}). In the proposition below we derive functional convergence of a particular stochastic process $\widetilde{L}_{n}$, constructed from the sequence $(Z_{i})$, in the space $D^{2}$ with the weak $M_{1}$ topology. The next step will be to show that the $M_{2}$ distance between processes $L_{n}$ and $\widetilde{L}_{n}$ is asymptotically negligible as $n$ tends to infinity.

For the next result in the case $\alpha \in [1,2)$ we will need to assume the following condition to deal with small jumps:
 \begin{equation}\label{e:vsvcond}
 \lim_{u \downarrow 0} \limsup_{n \to \infty}~\Pr \bigg[ \max_{1 \leq k \leq n} \bigg| \sum_{i=1}^{k}  \bigg( \frac{Z_{i}}{a_{n}} 1_{\big\{ \frac{|Z_{i}|}{a_{n}} \leq u\big\}} - \mathrm{E} \bigg( \frac{Z_{i}}{a_{n}} 1_{\big\{ \frac{|Z_{i}|}{a_{n}} \leq u \big\}} \bigg) \bigg) \bigg| > \epsilon \bigg]=0
 \end{equation}
for all $\epsilon >0$. This condition holds if the sequence $(Z_{i})$ is $\rho$-mixing at a certain rate (see Lemma 4.8 in Tyran-Kami\'{n}ska~\cite{Ty10a}), in particular it holds for i.i.d.~sequences.

\begin{prop}\label{p:FLT}
 Let $(Z_{i})_{i \in \mathbb{Z}}$ be a strictly stationary and strongly mixing sequence of regularly varying random variables satisfying $(\ref{e:regvar})$ and $(\ref{eq:pq})$ with $\alpha \in (0,2)$, such that the local dependence condition $D'$ and conditions $(\ref{e:oceknula})$ and $(\ref{e:sim})$ hold. If $\alpha \in [1,2)$, also suppose that condition $(\ref{e:vsvcond})$ holds. Let
 $(C_{i})_{i \geq 0 }$ be a sequence of random variables independent of $(Z_{i})$ such that the series defying the linear process
 $$ X_{i} = \sum_{j=0}^{\infty}C_{j}Z_{i-j}, \qquad i \in \mathbb{Z},$$
  is a.s.~convergent.
Then, as $n \to \infty$,
\begin{equation*}\label{e:pomkonv}
\widetilde{L}_{n}(\,\cdot\,) := \bigg( \sum_{i = 1}^{\lfloor n \, \cdot \, \rfloor} \frac{CZ_{i}}{a_{n}}, \sum_{i = 1}^{\lfloor n \, \cdot \, \rfloor} \frac{C^{S} Z_{i}^{2}}{a_{n}^{2}}  \bigg) \dto  (C^{(1)}V(\,\cdot\,), C^{(2)}W(\,\cdot\,))
\end{equation*}
 in $D^{2}$ with the weak $M_{1}$ topology,
where $V$ is an $\alpha$--stable L\'{e}vy process with characteristic triple $(0, \mu, b)$, with $\mu$ as in $(\ref{eq:mu})$,
$$ b = \left\{ \begin{array}{cc}
                                   0, & \quad \alpha = 1,\\[0.4em]
                                   (p-r)\frac{\alpha}{1-\alpha}, & \quad \alpha \in (0,1) \cup (1,2),
                                 \end{array}\right.$$
with $p$ and $r$ defined in $(\ref{eq:pq})$, $W$ is an $\alpha/2$--stable L\'{e}vy process with characteristic triple $(0, \widetilde{\mu}, \alpha/(2-\alpha))$ with $\widetilde{\mu}(\rmd x) = (\alpha /2) x^{-\alpha/2-1} 1_{(0,\infty)}(x)\,\rmd x$,
and $(C^{(1)}, C^{(2)})$ is a random vector, independent of $(V, W)$, such that
$(C^{(1)}, C^{(2)}) \eind (C, C^{S})$.
\end{prop}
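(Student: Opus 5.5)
The plan is to first prove the convergence without the random factors $C$, $C^{S}$, and then attach them using their independence from $(Z_i)$. Concretely, I would first show that
$$ \bigg( \sum_{i=1}^{\lfloor n\,\cdot\,\rfloor}\frac{Z_i}{a_n}, \sum_{i=1}^{\lfloor n\,\cdot\,\rfloor}\frac{Z_i^2}{a_n^2}\bigg) \dto (V(\,\cdot\,), W(\,\cdot\,)) $$
in $D^{2}$ with the weak $M_1$ topology. This comes from the point process convergence (\ref{e:BaTa}) in the Poisson form (\ref{e:BaTa1}), combined with the continuous mapping theorem for $\Phi^{(u)}$ (Lemma~\ref{l:contfunct}). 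The limit $N$ a.s.\ belongs to $\Lambda$: it has no atoms on $\{0,1\}\times\EE$ or on the lines $\pm u,\pm\infty$, and since the $T_i$ are distinct, no two atoms share a time coordinate. Hence
$$\Phi^{(u)}(N_n) \dto \Phi^{(u)}(N) = \Big(\sum_{T_i\le \cdot} P_iQ_i 1_{\{P_i>u\}}, \sum_{T_i\le\cdot} P_i^2 1_{\{P_i>u\}}\Big).$$
The second coordinate of $\Phi^{(u)}(N)$ is a compound Poisson process whose Lévy measure is the image of $\mu$ under $x\mapsto x^2$, restricted to $(u^2,\infty)$. That image measure is exactly $\widetilde\mu$.

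For the first coordinate I would argue as follows. When $\alpha\in(0,1)$ no centering is needed: letting $u\downarrow0$, the limits converge a.s.\ uniformly to $(V,W)$, with $V$ the stable process with triple $(0,\mu,b)$. The small jumps are negligible in probability by Karamata's theorem and a first-moment bound. When $\alpha\in[1,2)$, I subtract the truncated means $\lfloor nt\rfloor \E(Z_1 a_n^{-1}1_{\{u<|Z_1|/a_n\le 1\}})$, which are deterministic and converge uniformly to continuous linear functions of $t$. Lemma~\ref{l:weakM1transf} then transports the convergence to the centered processes, and condition (\ref{e:vsvcond}) controls the small jumps. The second coordinate, with $\alpha/2<1$, never needs centering: its small-jump part $\sum_{i\le n} Z_i^2 a_n^{-2}1_{\{|Z_i|\le ua_n\}}$ has expectation of order $u^{2-\alpha}\to0$, again by Karamata. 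Finally, Billingsley's approximation theorem (Theorem 3.2 in Billingsley), applied with the weak $M_1$ metric $d_p^{M_1}$ bounded by uniform distances, gives the joint convergence. The drift term $\alpha/(2-\alpha)$ for $W$ arises from the convention that the triple includes the compensator on $(0,1]$.

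Second, I would bring in the coefficients. Since $(C,C^{S})$ is independent of $(Z_i)$, the joint convergence $\big((C,C^{S}), \text{process}\big) \dto \big((C^{(1)},C^{(2)}),(V,W)\big)$ holds in $\mathbb{R}^2\times D^2$, with independent components in the limit; this follows from product-measure convergence. The map $((c,s),(x,y))\mapsto(cx,sy)$ is continuous from $\mathbb{R}^2\times D^2$ to $D^2$ with the weak $M_1$ topology, because multiplication by a scalar converging to a limit preserves $M_1$ convergence coordinatewise (parametric representations scale, and scalars are constant functions). The continuous mapping theorem then yields the claim.

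The main obstacle is the small-jump step for $\alpha\in[1,2)$ in the first coordinate, i.e.\ the coordinated limit $u\downarrow0$ together with the recentering. I would handle it with (\ref{e:vsvcond}) and the standard Lévy–Itô convergence of the compensated compound Poisson approximations to $V$, as in Basrak et al.~\cite{BKS}.
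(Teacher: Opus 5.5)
Your proposal is correct and follows essentially the same route as the paper: continuity of $\Phi^{(u)}$ on $\Lambda$ applied to the Poisson limit, deterministic recentering via Lemma~\ref{l:weakM1transf}, the It\^{o} representation with a Slutsky-type approximation as $u\to 0$ (using (\ref{e:vsvcond}), or Karamata plus Markov), and then attaching $(C,C^{S})$ by independence and continuity of multiplication by constants. The differences are cosmetic: you skip the centering when $\alpha<1$ and in the squared coordinate, and you check continuity of $((c,s),(x,y))\mapsto(cx,sy)$ on $\mathbb{R}^{2}\times D^{2}$ directly rather than via Lemma~\ref{l:contmultpl} on $D^{4}$. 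You should, however, make explicit the final step for $\alpha\in[1,2)$ of removing the full centering $\lfloor nt\rfloor\E\bigl(Z_{1}a_{n}^{-1}1_{\{|Z_{1}|\le a_{n}\}}\bigr)$, since that is exactly where (\ref{e:oceknula}) and (\ref{e:sim}) are used and where the drift $b$ comes from.
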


Before the proof of the proposition recall that the distribution of a L\'{e}vy process $V$ is characterized by its
characteristic triple (i.e.~the characteristic triple of the infinitely divisible distribution
of $V(1)$). The characteristic function of $V(1)$ and the characteristic triple
$(a, \rho, c)$ are related in the following way:
 $$
  \mathrm{E} [e^{izV(1)}] = \exp \biggl( -\frac{1}{2}az^{2} + icz + \int_{\mathbb{R}} \bigl( e^{izx}-1-izx 1_{[-1,1]}(x) \bigr)\,\rho(\rmd x) \biggr)$$
for $z \in \mathbb{R}$, where $a \ge 0$, $c \in \mathbb{R}$ are constants, and $\rho$ is a measure on $\mathbb{R}$ satisfying
$$ \rho ( \{0\})=0 \qquad \text{and} \qquad \int_{\mathbb{R}}(|x|^{2} \wedge 1)\,\rho(\rmd x) < \infty,$$
where $a \wedge b = \min \{a,b\}$.
 We refer to Sato~\cite{Sa99} for a textbook treatment of
L\'{e}vy processes.

\begin{proof}[Proof of Proposition~\ref{p:FLT}]
Take an arbitrary $0<u<1$, and consider
$$ \Phi^{(u)}(N_{n})(\,\cdot\,) = \bigg( \sum_{i/n \leq\,\cdot}\frac{Z_{i}}{a_{n}} 1_{ \big\{ \frac{|Z_{i}|}{a_{n}} > u \big\} }, \sum_{i/n \leq\,\cdot}\frac{Z_{i}^{2}}{a_{n}^{2}} 1_{ \big\{ \frac{|Z_{i}|}{a_{n}} > u \big\} } \bigg).$$
The limiting point process $N$ in relations (\ref{e:BaTa}) and (\ref{e:BaTa1}) is a Poisson process, and hence it is almost surely contained in the set $\Lambda$ (see Resnick~\cite{Resnick07}, page 221). Therefore, since by Lemma~\ref{l:contfunct} the functional $\Phi^{(u)}$ is continuous on the set $\Lambda$, the continuous mapping theorem applied to the convergence in (\ref{e:BaTa}) yields $\Phi^{(u)}(N_{n}) \dto \Phi^{(u)}(N)$ in $D^{2}$ under the weak $M_{1}$ topology, i.e.
\begin{equation}\label{e:mainconv0}
 \bigg( \sum_{i = 1}^{\lfloor n \, \cdot \, \rfloor} \frac{Z_{i}}{a_{n}}
    1_{ \bigl\{ \frac{|Z_{i}|}{a_{n}} > u \bigr\} },  \sum_{i = 1}^{\lfloor n \, \cdot \, \rfloor} \frac{Z_{i}^{2}}{a_{n}^{2}}
    1_{ \bigl\{ \frac{|Z_{i}|}{a_{n}} > u \bigr\} } \bigg)
    \dto
   \bigg( \sum_{T_{i} \le \, \cdot} P_{i}Q_{i} 1_{\{ |P_{i}Q_{i}| > u \}}, \sum_{T_{i} \le \, \cdot} P_{i}^{2}Q_{i}^{2} 1_{\{ |P_{i}Q_{i}| > u \}} \bigg).
 \end{equation}
 Since $P_{i}>0$ and $|Q_{i}|=1$ for all $i$, the limiting process in (\ref{e:mainconv0}) reduces to
 $$  \bigg( \sum_{T_{i} \le \, \cdot} P_{i}Q_{i} 1_{\{ P_{i} > u \}}, \sum_{T_{i} \le \, \cdot} P_{i}^{2} 1_{\{P_{i} > u \}} \bigg).$$
Relation (\ref{eq:onedimregvar}) implies, as $n \to \infty$,
\begin{eqnarray}\label{e:conv12}
 \nonumber \floor{nt} \mathrm{E} \Big( \frac{Z_{1}}{a_{n}} 1_{ \big\{ u < \frac{|Z_{1}|}{a_{n}} \leq 1 \big\} } \Big) &=& \frac{\floor{nt}}{n} \int_{u < |x| \leq 1} x\,n\Pr \Big( \frac{Z_{1}}{a_{n}} \in \rmd x \Big) \\[0.6em]
   & \to &  t \int_{u < |x| \leq 1} x\mu(\rmd x)
\end{eqnarray}
for every $t \in [0,1]$, and similarly
\begin{equation}\label{e:conv122}
 \floor{nt} \mathrm{E} \Big( \frac{Z_{1}^{2}}{a_{n}^{2}} 1_{ \big\{ u < \frac{|Z_{1}|}{a_{n}} \leq 1 \big\} } \Big)
    \to  t \int_{u < |x| \leq 1} x^{2}\mu(\rmd x),
\end{equation}
and these convergences are uniform in $t$.
Therefore an application of Lemma~\ref{l:weakM1transf} to (\ref{e:mainconv0}), (\ref{e:conv12}) and (\ref{e:conv122})  yields, as $n \to \infty$,
\begin{multline}\label{e:mainconv}
      L_{n}^{(u)}(\,\cdot\,) := \bigg( \sum_{i = 1}^{\lfloor n \, \cdot \, \rfloor} \frac{Z_{i}}{a_{n}}
    1_{ \bigl\{ \frac{|Z_{i}|}{a_{n}} > u \bigr\} } - \floor{n\,\cdot\,}b_{n}^{(u)}, \sum_{i = 1}^{\lfloor n \, \cdot \, \rfloor} \frac{Z_{i}^{2}}{a_{n}^{2}}
    1_{ \bigl\{ \frac{|Z_{i}|}{a_{n}} > u \bigr\} } - \floor{n\,\cdot\,}c_{n}^{(u)} \bigg) \\
    \dto L^{(u)}(\,\cdot\,) :=  \bigg( \sum_{T_{i} \le \, \cdot} P_{i}Q_{i} 1_{\{ P_{i} > u \}} - (\,\cdot\,) b^{(u)}, \sum_{T_{i} \le \, \cdot} P_{i}^{2} 1_{\{ P_{i} > u \}} - (\,\cdot\,) c^{(u)} \bigg)
\end{multline}
in $D^{2}$ under the weak $M_{1}$ topology, where
$$ b_{n}^{(u)} = \mathrm{E} \Big( \frac{Z_{1}}{a_{n}} 1_{ \big\{ u < \frac{|Z_{1}|}{a_{n}} \leq 1 \big\} } \Big), \qquad c_{n}^{(u)} = \mathrm{E} \Big( \frac{Z_{1}^{2}}{a_{n}^{2}} 1_{ \big\{ u < \frac{|Z_{1}|}{a_{n}} \leq 1 \big\} } \Big)$$
and
$$ b^{(u)} = \int_{u < |x| \leq 1} x\mu(\rmd x), \qquad c^{(u)} = \int_{u < |x| \leq 1} x^{2}\mu(\rmd x). $$
Since $N=\sum_{i} \delta_{(T_{i}, P_{i}Q_{i} })$
 is a Poisson process with intensity measure
$ Leb \times \mu$, by the It\^{o} representation of a L\'{e}vy process (see Resnick~\cite{Resnick07}, pages 150--157; and Sato~\cite{Sa99}, Theorem 14.3 and Theorem 19.2), there exists an $\alpha$--stable L\'{e}vy process $V^{(0)}(\,\cdot\,)$ with characteristic triple $(0, \mu, 0)$ such that
$$ \sup_{t \in [0,1]} |L^{(u)1}(t)-V^{(0)}(t)| \to 0$$
almost surely as $u \to 0$, where $L^{(u)1}$ is the first component of the process $L^{(u)}$. Since uniform convergence implies Skorokhod $M_{1}$ convergence, we get
\begin{equation}\label{e:dm1}
d_{M_{1}} ( L^{(u)1}, V^{(0)} ) \to 0
\end{equation}
almost surely as $u \to 0$. Proposition 5.2 in Resnick~\cite{Resnick07} implies that $\sum_{i} \delta_{(T_{i}, P_{i}^{2})}$
 is a Poisson process with intensity measure
$ Leb \times \widetilde{\mu}$, with $\widetilde{\mu}(\rmd x) = (\alpha /2) x^{-\alpha/2-1}\,\rmd x$ for $x>0$.
A new application of the It\^{o} representation yields the existence of an $\alpha/2$--stable L\'{e}vy process $W^{(0)}(\,\cdot\,)$ with characteristic triple $(0, \widetilde{\mu}, 0)$ such that
\begin{equation}\label{e:dm12}
d_{M_{1}} ( L^{(u)2}, W^{(0)} ) \to 0
\end{equation}
almost surely as $u \to 0$, where $L^{(u)2}$ is the second component of the process $L^{(u)}$.
Let
$L^{(0)} := (V^{(0)}, W^{(0)})$.
Then from relations (\ref{e:dm1}) and (\ref{e:dm12}), and the definition of the metric $d_{p}^{M_{1}}$ in (\ref{e:defdpM1}) we obtain
\begin{equation*}
d_{p}^{M_{1}}(L^{(u)}, L^{(0)}) \to 0
\end{equation*}
almost surely as $u \to 0$. Since almost sure convergence implies convergence in distribution, we have, as $u \to 0$,
\begin{equation}\label{e:mainconv2}
 L^{(u)}(\,\cdot\,) \dto L^{(0)}(\,\cdot\,)
\end{equation}
in $D^{2}$ endowed with the weak $M_{1}$ topology.
Let
$$   L_{n}^{(0)}(\,\cdot\,) := \bigg( \sum_{i = 1}^{\lfloor n \, \cdot \, \rfloor} \frac{Z_{i}}{a_{n}}
     - \floor{n\,\cdot\,} \mathrm{E} \Big( \frac{Z_{1}}{a_{n}} 1_{ \big\{ \frac{|Z_{1}|}{a_{n}} \leq 1 \big\} } \Big) , \sum_{i = 1}^{\lfloor n \, \cdot \, \rfloor} \frac{Z_{i}^{2}}{a_{n}^{2}}
     - \floor{n\,\cdot\,} \mathrm{E} \Big( \frac{Z_{1}^{2}}{a_{n}^{2}} 1_{ \big\{ \frac{|Z_{1}|}{a_{n}} \leq 1 \big\} } \Big)\bigg).
    $$
If we show that
$$ \lim_{u \to 0}\limsup_{n \to \infty} \Pr(d_{p}^{M_{1}}(L_{n}^{(0)},L_{n}^{(u)}) > \epsilon)=0$$
for every $\epsilon >0$, from (\ref{e:mainconv}) and (\ref{e:mainconv2}) by a generalization of Slutsky's theorem (see Theorem 3.5 in Resnick~\cite{Resnick07}) it will follow that
$ L_{n}^{(0)} \dto L^{(0)}$ as $n \to \infty$,
in $D^{2}$ with the weak $M_{1}$ topology.
Recalling the definitions, the fact that the metric $d_{p}^{M_{1}}$ is bounded above by the uniform metric (see Theorem 12.10.3 in Whitt~\cite{Whitt02}), and using stationarity and Markov's inequality we obtain
 \begin{eqnarray}\label{e:slutsky}
    \nonumber  \Pr ( d_{p}^{M_{1}}(L_{n}^{(0)}, L_{n}^{(u)}) > \epsilon ) & \leq & \Pr \bigg(
     \sup_{t \in [0,1]} \|L_{n}^{(0)}(t) - L_{n}^{(u)}(t)\| >  \epsilon \bigg) \\[0.2em]
   \nonumber & \hspace*{-16em} \leq & \ \hspace*{-8em} \Pr \bigg[
       \sup_{t \in [0,1]}  \bigg| \sum_{i=1}^{\lfloor nt \rfloor} \frac{Z_{i}}{a_{n}}
       1_{ \big\{ \frac{|Z_{i}|}{a_{n}} \leq u \big\} } - \lfloor nt \, \rfloor \mathrm{E} \Big( \frac{Z_{1}}{a_{n}} 1_{ \big\{ \frac{|Z_{1}|}{a_{n}} \leq u \big\} } \Big) \bigg|  > \epsilon
       \bigg]\\[0.3em]
   \nonumber & \hspace*{-16em} & \ \hspace*{-8em} + \Pr \bigg[
       \sup_{t \in [0,1]}  \bigg| \sum_{i=1}^{\lfloor nt \rfloor} \frac{Z_{i}^{2}}{a_{n}^{2}}
       1_{ \big\{ \frac{|Z_{i}|}{a_{n}} \leq u \big\} } - \lfloor nt \, \rfloor \mathrm{E} \Big( \frac{Z_{1}^{2}}{a_{n}^{2}} 1_{ \big\{ \frac{|Z_{1}|}{a_{n}} \leq u \big\} } \Big) \bigg|  > \epsilon
       \bigg]\\[0.3em]
    \nonumber & \hspace*{-16em} = & \ \hspace*{-8em} \Pr \bigg[
       \max_{1 \leq k \leq n}  \bigg| \sum_{i=1}^{k} \bigg( \frac{Z_{i}}{a_{n}}
       1_{ \big\{ \frac{|Z_{i}|}{a_{n}} \leq u \big\} } - \mathrm{E} \Big( \frac{Z_{1}}{a_{n}} 1_{ \big\{ \frac{|Z_{1}|}{a_{n}} \leq u \big\} } \Big) \bigg) \bigg|  > \epsilon
       \bigg]\\[0.3em]
    \nonumber & \hspace*{-16em} & \ \hspace*{-8em} + \Pr \bigg[
       \max_{1 \leq k \leq n}  \bigg| \sum_{i=1}^{k} \bigg( \frac{Z_{i}^{2}}{a_{n}^{2}}
       1_{ \big\{ \frac{|Z_{i}|}{a_{n}} \leq u \big\} } - \mathrm{E} \Big( \frac{Z_{1}^{2}}{a_{n}^{2}} 1_{ \big\{ \frac{|Z_{1}|}{a_{n}} \leq u \big\} } \Big) \bigg) \bigg|  > \epsilon
       \bigg]
 \end{eqnarray}
Therefore we have to show
\begin{equation}\label{e:slutskycondition}
 \lim_{u \to 0}\limsup_{n \to \infty} \Pr \bigg[
       \max_{1 \leq k \leq n}  \bigg| \sum_{i=1}^{k} \bigg( \frac{Z_{i}}{a_{n}}
       1_{ \big\{ \frac{|Z_{i}|}{a_{n}} \leq u \big\} } - \mathrm{E} \Big( \frac{Z_{1}}{a_{n}} 1_{ \big\{ \frac{|Z_{1}|}{a_{n}} \leq u \big\} } \Big) \bigg) \bigg|  > \epsilon
       \bigg]
\end{equation}
and
\begin{equation}\label{e:slutskycondition2}
 \lim_{u \to 0}\limsup_{n \to \infty} \Pr \bigg[
       \max_{1 \leq k \leq n}  \bigg| \sum_{i=1}^{k} \bigg( \frac{Z_{i}^{2}}{a_{n}^{2}}
       1_{ \big\{ \frac{|Z_{i}|}{a_{n}} \leq u \big\} } - \mathrm{E} \Big( \frac{Z_{1}^{2}}{a_{n}^{2}} 1_{ \big\{ \frac{|Z_{1}|}{a_{n}} \leq u \big\} } \Big) \bigg) \bigg|  > \epsilon
       \bigg]
\end{equation}
For $\alpha \in [1,2)$ relation (\ref{e:slutskycondition}) is simply condition (\ref{e:vsvcond}). In the case $\alpha \in (0,1)$, let
$$ I(u,n,\epsilon) = \Pr \bigg[
       \max_{1 \le k \le n}  \bigg| \sum_{i=1}^{k} \bigg( \frac{Z_{i}}{a_{n}} \,
       1_{ \big\{ \frac{|Z_{i}|}{a_{n}} \le u \big\} } -  \E \bigg( \frac{Z_{1}}{a_{n}} \,
       1_{ \big\{ \frac{|Z_{1}|}{a_{n}} \le u \big\} } \bigg) \bigg) \bigg| > \epsilon
       \bigg].$$
 Using stationarity and Markov's inequality we get the bound
 \begin{eqnarray}\label{e:alpha01}
   \nonumber I(u,n,\epsilon) & \le & \epsilon^{-1} \E \bigg[ \sum_{i=1}^{n} \bigg| \frac{Z_{i}}{a_{n}} \,
       1_{ \big\{ \frac{|Z_{i}|}{a_{n}} \le u \big\} } -  \E \bigg( \frac{Z_{1}}{a_{n}} \,
       1_{ \big\{ \frac{|Z_{1}|}{a_{n}} \le u \big\} } \bigg) \bigg| \bigg]\\[0.4em]
     \nonumber    &\leq& \frac{2n}{\epsilon} \E \bigg( \frac{|Z_{1}|}{a_{n}} \, 1_{ \big\{ \frac{|Z_{1}|}{a_{n}}
            \le u \big\} } \bigg)\\[0.4em]
  \nonumber & = & \ \frac{2u}{\epsilon} \cdot n \Pr (|Z_{1}| > a_{n}) \cdot \frac{\Pr(|Z_{1}| > ua_{n})}{\Pr(|Z_{1}|>a_{n})} \cdot
          \frac{\mathrm{E}(|Z_{1}| 1_{ \{ |Z_{1}| \leq ua_{n} \} })}{ ua_{n} \Pr (|Z_{1}| >ua_{n})}.
 \end{eqnarray}
 Since the random variable $Z_{1}$ is regularly varying with index $\alpha$, it holds that
  $$ \lim_{n \to \infty} \frac{\Pr(|Z_{1}| > ua_{n})}{\Pr(|Z_{1}|>a_{n})} = u^{-\alpha}.$$
  By Karamata's theorem
  $$ \lim_{n \to \infty} \frac{\mathrm{E}(|Z_{1}| 1_{ \{ |Z_{1}| \leq ua_{n} \} })}{ ua_{n} \Pr (|Z_{1}| >ua_{n})} = \frac{\alpha}{1-\alpha},$$
 and therefore taking into account relation (\ref{eq:niz}), we get
 $$ \limsup_{n \to \infty}  I(u,n,\epsilon) \leq u^{1-\alpha} \frac{2 \alpha}{\epsilon (1-\alpha)}.$$
 Since in this case $1-\alpha >0$, letting $u \to 0$ we obtain
 $$ \lim_{u \to 0}\limsup_{n \to \infty} I(u, n, \epsilon)=0.$$
 Now we prove relation (\ref{e:slutskycondition2}). Let
$$ J(u,n,\epsilon) = \Pr \bigg[
       \max_{1 \le k \le n}  \bigg| \sum_{i=1}^{k} \bigg( \frac{Z_{i}^{2}}{a_{n}^{2}} \,
       1_{ \big\{ \frac{|Z_{i}|}{a_{n}} \le u \big\} } -  \E \bigg( \frac{Z_{1}^{2}}{a_{n}^{2}} \,
       1_{ \big\{ \frac{|Z_{1}|}{a_{n}} \le u \big\} } \bigg) \bigg) \bigg| > \epsilon
       \bigg],$$
and similarly as for $I(u,n, \epsilon)$ we derive
 \begin{eqnarray*}
   \nonumber J(u,n,\epsilon) & \le & \epsilon^{-1} \E \bigg[ \sum_{i=1}^{n} \bigg| \frac{Z_{i}^{2}}{a_{n}^{2}} \,
       1_{ \big\{ \frac{|Z_{i}|}{a_{n}} \le u \big\} } -  \E \bigg( \frac{Z_{1}^{2}}{a_{n}^{2}} \,
       1_{ \big\{ \frac{|Z_{1}|}{a_{n}} \le u \big\} } \bigg) \bigg| \bigg]\\[0.4em]
  \nonumber & \leq & \ \frac{2u^{2}}{\epsilon} \cdot n \Pr (|Z_{1}| > a_{n}) \cdot \frac{\Pr(|Z_{1}| > ua_{n})}{\Pr(|Z_{1}|>a_{n})} \cdot
          \frac{\mathrm{E}(Z_{1}^{2} 1_{ \{ |Z_{1}| \leq ua_{n} \} })}{ u^{2}a_{n}^{2} \Pr (|Z_{1}| >ua_{n})}.
 \end{eqnarray*}
  By Karamata's theorem
  $$ \lim_{n \to \infty} \frac{\mathrm{E}(Z_{1}^{2} 1_{ \{ |Z_{1}| \leq ua_{n} \} })}{ u^{2}a_{n}^{2} \Pr (|Z_{1}| >ua_{n})} = \frac{\alpha}{2-\alpha},$$
 and hence
 $$ \limsup_{n \to \infty}  J(u,n,\epsilon) \leq u^{2-\alpha} \frac{2 \alpha}{\epsilon (2-\alpha)}.$$
 Since $\alpha \in (0,2)$, letting $u \to 0$ we obtain
 $$ \lim_{u \to 0}\limsup_{n \to \infty} J(u, n, \epsilon)=0.$$
 Therefore we conclude
 \begin{equation}\label{e:weakM1L0}
  L_{n}^{(0)}(\,\cdot\,) \dto L^{(0)}(\,\cdot\,) \qquad \textrm{as} \ n \to \infty,
  \end{equation}
in $D^{2}$ with the weak $M_{1}$ topology.
By Karamata's theorem, as $n \to \infty$,
\begin{eqnarray*}
  n\,\mathrm{E} \Big( \frac{Z_{1}}{a_{n}} 1_{\{|Z_{1}| \leq a_{n} \}} \Big) \to (p-r)\frac{\alpha}{1-\alpha}, && \textrm{if}  \ \ \alpha \in (0,1),\\[0.5em]
  n\,\mathrm{E} \Big( \frac{Z_{1}}{a_{n}} 1_{\{ |Z_{1}| > a_{n} \}} \Big) \to (p-r)\frac{\alpha}{\alpha-1}, && \textrm{if} \ \ \alpha \in (1,2),
\end{eqnarray*}
with $p$ and $r$ as in (\ref{eq:pq}), and similarly
\begin{equation*}
  n\,\mathrm{E} \Big( \frac{Z_{1}^{2}}{a_{n}^{2}} 1_{\{|Z_{1}| \leq a_{n} \}} \Big) \to \frac{\alpha}{2-\alpha}.
\end{equation*}
Therefore conditions (\ref{e:oceknula}) and (\ref{e:sim}), and Lemma~\ref{l:weakM1transf}, applied to the convergence in (\ref{e:weakM1L0}), yield
\begin{equation}\label{eq:weakconvLn*}
 L_{n}^{*}(\,\cdot\,) \dto L(\,\cdot\,) \qquad \textrm{as} \  n \to \infty
\end{equation}
in $D^{2}$ with the weak $M_{1}$ topology, where
$$L_{n}^{*}(\,\cdot\,) := \bigg( \sum_{i = 1}^{\lfloor n \, \cdot \, \rfloor} \frac{Z_{i}}{a_{n}}, \sum_{i = 1}^{\lfloor n \, \cdot \, \rfloor} \frac{Z_{i}^{2}}{a_{n}^{2}}  \bigg)$$
and $L:=(V, W)$, with
$$V(t)= \left\{ \begin{array}{cc}
                                   V^{(0)}(t), & \quad \alpha = 1,\\[0.4em]
                                   V^{(0)}(t) + t(p-r)\frac{\alpha}{1-\alpha}, & \quad \alpha \in (0,1) \cup (1,2),
                                 \end{array}\right.$$
being an $\alpha$--stable L\'{e}vy process
 with characteristic triple
$(0,\mu,0)$ if $\alpha=1$ and $(0,\mu,(p-r)\alpha/(1-\alpha))$ if $\alpha \in (0,1) \cup (1,2)$, and
$$W(t) = W^{(0)}(t) + t \frac{\alpha}{2-\alpha}$$
 being an $\alpha/2$--stable L\'{e}vy process with characteristic triple $(0,\widetilde{\mu}, \alpha/(2-\alpha))$.

 It is known that $D^{1}$ equipped with the Skorokhod $J_{1}$ topology is a Polish space, i.e.~metrizable as a complete separable metric space (see Billingsley~\cite{Bi68}, Section 14), and therefore the same holds for the $M_{1}$ topology, since it is topologically complete (see Whitte~\cite{Whitt02}, Section 12.8) and separability remains preserved in the weaker topology.
 The space $D^{2}$ equipped with the weak $M_{1}$ topology is separable as a direct product of two separable topological spaces. It is also topologically complete since the product metric $d_{p}^{M_{1}}$ inherits the completeness of the component metrics. Therefore $D^{2}$ with the weak $M_{1}$ topology is also a Polish space, and this suffices to conclude, by Corollary 5.18 in Kallenberg~\cite{Ka97}, that there exists a random vector ($C^{(1)}, C^{(2)})$, independent of $(V, W)$, such that
\begin{equation}\label{e:eqdistrC}
(C^{(1)}, C^{(2)}) \eind (C, C^{S}).
\end{equation}
This, relation (\ref{eq:weakconvLn*}), the fact that $(C, C^{S})$ is independent of $L_{n}^{*}$, and Theorem 3.29 in Kallenberg~\cite{Ka97}, imply that, as $n \to \infty$,
  \begin{equation}\label{e:zajedkonvK1}
   (B, B^{S}, L_{n}^{*1}, L_{n}^{*2}) \dto (B^{(1)}, B^{(2)}, V, W)
  \end{equation}
  in $D^{4}$ with the product $M_{1}$ topology, where $L_{n}^{*i}$ is the $i$--th component of $L_{n}^{*}$ $(i=1,2)$, $B(t)=C$, $B^{S}(t)=C^{S}$, $B^{(1)}(t)=C^{(1)}$ and $B^{(2)}(t)=C^{(2)}$ for $t \in [0,1]$.

Let $g \colon D^{4} \to D^{2}$ be a function defined by
$$ g(x) = (x_{1} \cdot x_{3}, x_{2} \cdot x_{4}), \qquad x=(x_{1}, \ldots, x_{4}) \in D^{4}.$$
 Denote by $\widetilde{D}_{1,2}$ the set of all functions in $D^{4}$ for which the first two component functions have no discontinuity points, that is
$$ \widetilde{D}_{1,2} = \{ (x_{1}, \ldots, x_{4}) \in D^{4} : \textrm{Disc}(x_{i}) = \emptyset, \ i=1,2 \}.$$
By Lemma~\ref{l:contmultpl} the function $g$ is continuous on the set $\widetilde{D}_{1,2}$ in the weak $M_{1}$ topology, and hence $\textrm{Disc}(g) \subseteq \widetilde{D}_{1,2}^{c}$. Denoting
$\widetilde{D}_{1} = \{ u \in D^{1} : \textrm{Disc}(u)= \emptyset \}$ we obtain
\begin{eqnarray*}
\Pr[ (B^{(1)}, B^{(2)}, V, W) \in \textrm{Disc}(g) ] & \leq & \Pr [ (B^{(1)}, B^{(2)}, V, W)  \in \widetilde{D}_{1,2}^{c}]\\[0.6em]
 &   \leq &  \Pr [ \{ B^{(1)} \in \widetilde{D}_{1}^{c} \} \cup \{ B^{(2)} \in \widetilde{D}_{1}^{c} \}]=0,
 \end{eqnarray*}
where the last equality holds since $B^{(1)}$ and $B^{(2)}$ have no discontinuity points. This allows us to apply the continuous mapping theorem to relation (\ref{e:zajedkonvK1}) yielding
$g(B, B^{S}, L_{n}^{*1}, L_{n}^{*2}) \dto g(B^{(1)}, B^{(2)}, V, W)$ as $n \to \infty$, that is
\begin{eqnarray}\label{e:zajedkonvK2}
 \nonumber \bigg( \sum_{i = 1}^{\lfloor n \, \cdot \, \rfloor} \frac{CZ_{i}}{a_{n}}, \sum_{i = 1}^{\lfloor n \, \cdot \, \rfloor} \frac{C^{S} Z_{i}^{2}}{a_{n}^{2}}  \bigg)
 & \dto & (C^{(1)} V(\,\cdot\,), C^{(2)} W(\,\cdot\,))
\end{eqnarray}
 in $D^{2}$ with the weak $M_{1}$ topology, which completes the proof.
\end{proof}

\section{Finite order linear processes}
\label{S:FiniteMA}

Fix $q \in \mathbb{N}$ and let $C_{0}, C_{1}, \ldots , C_{q}$ be random variables satisfying
\be\label{eq:FiniteMAcond}
0 \le \sum_{i=0}^{s}C_{i} \Bigg/ \sum_{i=0}^{q}C_{i} \le 1 \ \ \textrm{a.s.} \qquad \textrm{for every} \ s=0, 1, \ldots, q.
\ee
Let
$C= \sum_{i=0}^{q}C_{i}$, and assume $C \neq 0$ a.s. If the $C_{j}$'s are all nonnegative or all nonpositive, then condition (\ref{eq:FiniteMAcond}) is trivially satisfied. Let $L_{n}=(V_{n}, W_{n})$, where
\begin{equation*}
V_{n}(t) = \frac{1}{a_{n}}  \sum_{i=1}^{\floor {nt}}X_{i} \qquad \textrm{and} \qquad W_{n}(t)= \frac{1}{a_{n}^{2}}  \sum_{i=1}^{\floor {nt}}X_{i}^{2} \qquad (t \in [0,1])
\end{equation*}
are the partial sum processes constructed from linear processes
$$ X_{i} = \sum_{j=0}^{q}C_{j}Z_{i-j}, \qquad i \in \mathbb{Z},$$
with the normalizing sequence $(a_n)$ as in~\eqref{eq:niz}.

\begin{thm}\label{t:FinMA}
Let $(Z_{i})_{i \in \mathbb{Z}}$ be an i.i.d.~sequence of regularly varying random variables satisfying $(\ref{e:regvar})$ and $(\ref{eq:pq})$ with $\alpha \in (0,2)$. Assume conditions $(\ref{e:oceknula})$ and $(\ref{e:sim})$ hold. Let
 $C_{0}, \ldots, C_{q}$ be random variables, independent of $(Z_{i})$, such that condition $(\ref{eq:FiniteMAcond})$ holds.
Then, as $n \to \infty$,
\begin{equation*}\label{e:mainconvFinMA}
L_{n}(\,\cdot\,) \dto  (C^{(1)}V(\,\cdot\,), C^{(2)}W(\,\cdot\,))
\end{equation*}
 in $D^{2}$ with the weak $M_{2}$ topology,
where $V$ is an $\alpha$--stable L\'{e}vy process with characteristic triple $(0, \mu, b)$, with $\mu$ as in $(\ref{eq:mu})$,
$$ b = \left\{ \begin{array}{cc}
                                   0, & \quad \alpha = 1,\\[0.4em]
                                   (p-r)\frac{\alpha}{1-\alpha}, & \quad \alpha \in (0,1) \cup (1,2),
                                 \end{array}\right.$$
with $p$ and $r$ defined in $(\ref{eq:pq})$, $W$ is an $\alpha/2$--stable L\'{e}vy process with characteristic triple $(0, \widetilde{\mu}, \alpha/(2-\alpha))$ with $\widetilde{\mu}(\rmd x) = (\alpha /2) x^{-\alpha/2-1} 1_{(0,\infty)}(x)\,\rmd x$,
and $(C^{(1)}, C^{(2)})$ is a random vector, independent of $(V, W)$, such that
$(C^{(1)}, C^{(2)}) \eind (C, C^{S})$.
\end{thm}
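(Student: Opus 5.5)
The plan is to compare $L_{n}$ with the auxiliary process $\widetilde{L}_{n}$ of Proposition~\ref{p:FLT}, with $C^{S}=\sum_{j=0}^{q}C_{j}^{2}$. Independent sequences satisfy strong mixing, $D'$ and (\ref{e:vsvcond}), so Proposition~\ref{p:FLT} applies and gives $\widetilde{L}_{n}\dto (C^{(1)}V,C^{(2)}W)$ in the weak $M_{1}$ topology, hence also in the weak $M_{2}$ topology. By Slutsky's theorem (Theorem 3.5 in Resnick~\cite{Resnick07}) it then suffices to prove
\[
\lim_{n\to\infty}\Pr\big(d_{p}^{M_{2}}(L_{n},\widetilde{L}_{n})>\epsilon\big)=0 \qquad \textrm{for every} \ \epsilon>0 .
\]
By (\ref{e:defdp}) this splits into two one-dimensional statements, $d_{M_{2}}(V_{n},\widetilde{L}_{n}^{1})\to 0$ and $d_{M_{2}}(W_{n},\widetilde{L}_{n}^{2})\to 0$ in probability. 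I would treat them separately, each time conditioning on $(C_{j})$ and using independence.

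For the first component, rewrite $\sum_{i=1}^{k}X_{i}=\sum_{i=1}^{k}CZ_{i}+R_{k}$. Here $R_{k}$ collects boundary terms: the terms $Z_{i}$ with $i\in\{1-q,\dots,0\}$, whose coefficients are partial sums of the $C_{j}$'s, and the terms $Z_{i}$ with $i\in\{k-q+1,\dots,k\}$, whose coefficients are $-(C-\sum_{j=0}^{k-i}C_{j})$. The initial terms divided by $a_{n}$ tend to $0$ in probability, so they can be discarded uniformly. The second group is the real issue, because it is not uniformly small. This is the approach of Kr19. Fix $\epsilon$ and $u$. With high probability, two indices $i$ with $|Z_{i}|>ua_{n}$ are never within distance $q$ of each other (i.i.d.\ plus regular variation give probability at most $nq\Pr(|Z_{1}|>ua_{n})^{2}\to0$). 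On that event, near a big jump $Z_{i}$ the process $V_{n}$ moves through the values $\widetilde{L}^{1}_{n}((i-1)/n)+a_{n}^{-1}Z_{i}\sum_{j=0}^{s}C_{j}$, $s=0,\dots,q$. Condition (\ref{eq:FiniteMAcond}) says these partial sums lie between $0$ and $C$. So within the time window $[i/n,(i+q)/n]$ the values of $V_{n}$ stay, up to the small-jump contributions, inside the segment between $\widetilde{L}^{1}_{n}(i/n-)$ and $\widetilde{L}^{1}_{n}(i/n)$. This segment is exactly the jump segment of the completed graph of $\widetilde{L}_{n}^{1}$, so the Hausdorff distance (\ref{e:defM2H}) between the two graphs is at most $q/n$ plus the size of the small terms. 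The remaining pieces, coming from $|Z_{i}|\le ua_{n}$ in the boundary windows, are bounded by $2(q+1)\max_{j}|C_{j}|\,u$ plus a negligible part. Letting $n\to\infty$ and then $u\to 0$ finishes this component.

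For the second component, expand
\[
X_{i}^{2}=\sum_{j}C_{j}^{2}Z_{i-j}^{2}+2\sum_{j<l}C_{j}C_{l}Z_{i-j}Z_{i-l} .
\]
I would show that $a_{n}^{-2}\sum_{i\le n}|Z_{i-j}Z_{i-l}|\to 0$ in probability for $j\neq l$. Since products of independent regularly varying variables have tail $o(\Pr(|Z|^{2}>x))$, this follows by truncation and a Markov bound with exponent $\delta<\alpha/2$, with $\delta\le1$. The diagonal sum $\sum_{i\le k}\sum_{j}C_{j}^{2}Z_{i-j}^{2}$ equals $C^{S}\sum_{i\le k}Z_{i}^{2}$ plus boundary terms. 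Here all coefficients are automatically nonnegative partial sums of the $C_{j}^{2}$ lying between $0$ and $C^{S}$, so the same graph argument as above applies with no extra condition. In fact, since the process is nondecreasing, even $M_{1}$ closeness is available.

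The main obstacle is the first component: the careful Hausdorff-distance bookkeeping on the event of isolated big jumps. In particular one must show that the small terms from the other indices in the window, and the sign interplay of $C$, do not take $V_{n}$ outside the jump segment by more than $\epsilon$. I would handle this by defining the event that all partial coefficient sums times $Z_{i}$ lie in $[0,C]\cdot Z_{i}$ (guaranteed a.s.\ by (\ref{eq:FiniteMAcond})), and bounding the maximum of the small-jump terms over all windows by $\epsilon$ via (\ref{e:vsvcond})-type estimates.
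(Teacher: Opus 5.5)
Your route is the paper's: Proposition~\ref{p:FLT} for $\widetilde{L}_{n}$ (weak $M_{1}$, hence weak $M_{2}$), Slutsky, and componentwise $M_{2}$ closeness. For $V_{n}$ the paper does not reprove anything but cites Krizmani\'{c}~\cite{Kr19}; your isolated-large-values sketch is a version of that argument. For $W_{n}$ the paper uses exactly your diagonal/cross-term split. The nonnegativity of the diagonal boundary coefficients (partial sums of the $C_{s}^{2}$, the observation $H_{n}(k)\ge 0$) plays the role that condition (\ref{eq:FiniteMAcond}) plays for $V_{n}$.

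\textbf{The cross-term step fails as you state it.} A Markov bound with exponent $\delta<\alpha/2$ gives
\[
\Pr\bigg(\sum_{i=1}^{n}\frac{|Z_{i-j}Z_{i-l}|}{a_{n}^{2}}>\eta\bigg)\le \eta^{-\delta}\,\frac{n}{a_{n}^{2\delta}}\,\big(\mathrm{E}|Z_{1}|^{\delta}\big)^{2}.
\]
By (\ref{e:App1}), $n/a_{n}^{2\delta}\to\infty$ when $2\delta<\alpha$, so this bound is useless. Truncation does not rescue this exponent. On the part of the sum where $|Z_{i-j}Z_{i-l}|\le \eta a_{n}^{2}$, the $\delta$-th moment still tends to $(\mathrm{E}|Z_{1}|^{\delta})^{2}>0$, so the same divergence reappears. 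The bulk of the sum needs an exponent above $\alpha/2$.

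\textbf{The fix.} The paper takes $r\in(\alpha/2,1\wedge\alpha)$, where each constraint does a job:
\begin{itemize}
\item $r<\alpha$ gives $\mathrm{E}|Z_{i-j}Z_{i-l}|^{r}=(\mathrm{E}|Z_{1}|^{r})^{2}<\infty$ for $j\neq l$, by independence;
\item $r\le 1$ allows subadditivity of $x\mapsto x^{r}$;
\item $2r>\alpha$ gives $n/a_{n}^{2r}\to 0$.
\end{itemize}
With this choice no truncation and no tail estimate for products are needed. With this exponent, and $2|C_{j}C_{l}|\le 2M$ on $\{C^{S}\le M\}$ (or conditioning on the coefficients, as you propose), your second component goes through.

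\textbf{Minor point.} The appeal to (\ref{e:vsvcond}) at the end is unnecessary. $V_{n}(k/n)-\widetilde{V}_{n}(k/n)$ involves only $Z_{1-q},\dots,Z_{0}$ and $Z_{k-q+1},\dots,Z_{k}$. So on the isolation event the small contributions in each window are bounded deterministically by a multiple of $|C|u$, as you already note.
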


\begin{proof} 
By Proposition~\ref{p:FLT}, $\widetilde{L}_{n} = (\widetilde{V}_{n}, \widetilde{W}_{n}) \dto  (C^{(1)}V, C^{(2)}W)$ as $n \to \infty$, in $D^{2}$ with the weak $M_{1}$ topology, where
$$ \widetilde{V}_{n}(t) =  \sum_{i=1}^{\floor {nt}}\frac{CZ_{i}}{a_{n}} \qquad \textrm{and} \qquad \widetilde{W}_{n}(t)= \sum_{i=1}^{\floor {nt}}\frac{C^{S}Z_{i}^{2}}{a_{n}^{2}}, \qquad t \in [0,1].$$
 Since $M_{1}$ convergence implies $M_{2}$ convergence, we have
\begin{equation}\label{e:pomkonvFMA}
\widetilde{L}_{n}(\,\cdot\,) = (\widetilde{V}_{n}(\,\cdot\,), \widetilde{W}_{n}(\,\cdot\,)) \dto  (C^{(1)}V(\,\cdot\,), C^{(2)}W(\,\cdot\,))
\end{equation}
 in $D^{2}$ with the weak $M_{2}$ topology as well.

If we show that
$$ \lim_{n \to \infty} \Pr [ d_{p}^{M_{2}}(\widetilde{L}_{n}, L_{n}) > \delta ]=0$$
for any $\delta > 0$, then from (\ref{e:pomkonvFMA}) by an application of Slutsky's theorem (see Theorem 3.4 in Resnick~\cite{Resnick07}) it will follow that $L_{n} \dto (C^{(1)}V, C^{(2)}W)$ as $n \to \infty$, in $D^{2}$ with the weak $M_{2}$ topology. By the definition of the metric $d_{p}^{M_{2}}$ in (\ref{e:defdp}) it suffices to show that
\begin{equation}\label{e:SlutskyV}
\lim_{n \to \infty} \Pr [ d_{M_{2}}(\widetilde{V}_{n}, V_{n}) > \delta ]=0
\end{equation}
and
\begin{equation}\label{e:SlutskyM}
\lim_{n \to \infty} \Pr [ d_{M_{2}}(\widetilde{W}_{n}, W_{n}) > \delta ]=0.
\end{equation}
Relation (\ref{e:SlutskyV}) was established in the proof of Theorem 2.1 in Krizmani\'{c}~\cite{Kr19} under condition (\ref{eq:FiniteMAcond}).
Now we turn our attention to relation (\ref{e:SlutskyM}). Fix $\delta >0$ and let $n \in \mathbb{N}$ be large enough, i.e.~$n > \max\{2q, 2q/\delta \}$.
By the definition of the metric $d_{M_{2}}$ in (\ref{e:defM2H}) we have
\begin{eqnarray*}
  d_{M_{2}}(\widetilde{W}_{n},W_{n}) &=& \bigg(\sup_{a \in \Gamma_{\widetilde{W}_{n}}} \inf_{b \in \Gamma_{W_{n}}} d(a,b) \bigg) \vee \bigg(\sup_{a \in \Gamma_{W_{n}}} \inf_{b \in \Gamma_{\widetilde{W}_{n}}} d(a,b) \bigg) \\[0.4em]
   &= :& U_{n} \vee T_{n},
\end{eqnarray*}
and therefore
\be\label{eq:AB}
\Pr [d_{M_{2}}(\widetilde{W}_{n}, W_{n})> \delta ] \leq \Pr(U_{n}>\delta) + \Pr(T_{n}>\delta).
\ee
To estimate the first term on the right hand side of (\ref{eq:AB}) we
use the same arguments as in the proof of Theorem 2.1 in Krizmani\'{c}~\cite{Kr19}, and thus we obtain
\begin{eqnarray}\label{eq:Un}
  \nonumber\{U_{n} > \delta\} & \subseteq & \{\exists\,a \in \Gamma_{\widetilde{W}_{n}} \ \textrm{such that} \ d(a,b) > \delta \ \textrm{for every} \ b \in \Gamma_{W_{n}} \} \\[0.4em]
  \nonumber & \subseteq & \{\exists\,k \in \{1,\ldots,q-1\} \ \textrm{such that} \ | \widetilde{W}_{n}(k/n) - W_{n}(k/n)| > \delta \}\\[0.4em]
  \nonumber & & \cup \ \{\exists\,k \in \{q,\ldots,n-q\} \ \textrm{such that} \ | \widetilde{W}_{n}(k/n) - W_{n}(k/n)| > \delta\\[0.4em]
  \nonumber & & \hspace*{1.5em} \textrm{and} \  | \widetilde{W}_{n}(k/n) - W_{n}((k+q)/n)| > \delta \}\\[0.4em]
  \nonumber & & \cup \ \{\exists\,k \in \{n-q+1,\ldots,n\} \ \textrm{such that} \ | \widetilde{W}_{n}(k/n) - W_{n}(k/n)| > \delta \}\\[0.4em]
  & =: & A^{U}_{n} \cup B^{U}_{n} \cup C^{U}_{n}.
\end{eqnarray}
For $k < q$ and an arbitrary $M>0$, by stationarity of the sequence $(Z_{i})$ we have
\begin{eqnarray*}
\Pr \Big( \Big| \widetilde{W}_{n} \Big( \frac{k}{n} \Big) \Big| > \frac{\delta}{2} \Big) & \leq & \Pr \Big( \sum_{i=1}^{q}\frac{C^{S}Z_{i}^{2}}{a_{n}^{2}} > \frac{\delta}{2} \Big)\\[0.2em]
 & = & \Pr \Big( \sum_{i=1}^{q} \frac{C^{S}Z_{i}^{2}}{a_{n}^{2}} > \frac{\delta}{2},\,C^{S} \leq M \Big) + \Pr \Big( \sum_{i=1}^{q}\frac{C^{S}Z_{i}^{2}}{a_{n}^{2}} > \frac{\delta}{2},\,C^{S} > M \Big)\\[0.2em]
 & \leq & \sum_{i=1}^{q} \Pr \Big( \frac{Z_{i}^{2}}{a_{n}^{2}} > \frac{\delta}{2qM} \Big) + \Pr(C^{S}>M)\\[0.2em]
 &=& q \Pr \Big( \frac{|Z_{1}|}{a_{n}} > \sqrt{\frac{\delta}{2qM}} \Big) + \Pr(C^{S}>M)
\end{eqnarray*}
and
\begin{eqnarray*}
\Pr \Big( \Big| W_{n} \Big( \frac{k}{n} \Big) \Big| > \frac{\delta}{2} \Big) &=& \Pr \Big(\sum_{i=1}^{k}\frac{X_{i}^{2}}{a_{n}^{2}} > \frac{\delta}{2} \Big) \leq \Pr \Big( \sum_{i=1}^{q} \frac{X_{i}^{2}}{a_{n}^{2}} > \frac{\delta}{2} \Big)\\[0.2em]
&\leq & \sum_{i=1}^{q} \Pr \Big( \frac{X_{i}^{2}}{a_{n}^{2}} > \frac{\delta}{2q} \Big) \leq \sum_{i=1}^{q} \Pr \Big( \frac{|X_{i}|}{a_{n}} > \frac{\sqrt{\delta}}{\sqrt{2q}} \Big)\\[0.2em]
&\leq & \sum_{i=1}^{q} \Pr \Big( \sum_{j=0}^{q} \frac{|C_{j}Z_{i-j}|}{a_{n}} > \frac{\sqrt{\delta}}{\sqrt{2q}} \Big)\\[0.2em]
 &\leq & \sum_{i=1}^{q} \Pr \Big( \sum_{j=0}^{q} \frac{|C_{j}Z_{i-j}|}{a_{n}} > \frac{\sqrt{\delta}}{\sqrt{2q}},\,C^{S} \leq M \Big) + \Pr(C^{S} > M)\\[0.2em]
&\leq & \sum_{i=1}^{q} \sum_{j=0}^{q} \Pr \Big( \frac{|Z_{i-j}|}{a_{n}} > \frac{\sqrt{\delta}}{(q+1)\sqrt{2qM}} \Big) + \Pr(C^{S} > M)\\[0.2em]
&= & q(q+1) \Pr \Big( \frac{|Z_{1}|}{a_{n}} > \frac{\sqrt{\delta}}{(q+1)\sqrt{2qM}} \Big) + \Pr(C^{S} > M),
\end{eqnarray*}
where in the last inequality above we used the fact that on the event $\{C^{S} \leq M\}$ it holds that $|C_{j}| \leq \sqrt{M}$ for all $j$.
Therefore
\begin{eqnarray}\label{eq:Bnlemmafirst}
  \nonumber \Pr (A^{U}_{n}) & \leq &  \sum_{k=1}^{q-1} \Pr \Big( \Big| \widetilde{W}_{n} \Big( \frac{k}{n} \Big) - W_{n} \Big( \frac{k}{n} \Big) \Big| >\delta \Big)\\[0.3em]
  \nonumber  &\leq& \sum_{k=1}^{q} \Big[ \Pr \Big( \Big| \widetilde{W}_{n} \Big( \frac{k}{n} \Big) \Big| > \frac{\delta}{2} \Big) + \Pr \Big( \Big| W_{n} \Big( \frac{k}{n} \Big) \Big| > \frac{\delta}{2} \Big) \Big]\\[0.3em]
   \nonumber &\leq & q^{2} \Pr \Big( \frac{|Z_{1}|}{a_{n}} > \sqrt{\frac{\delta}{2qM}} \Big) + q^{2}(q+1) \Pr \Big( \frac{|Z_{1}|}{a_{n}} > \frac{\sqrt{\delta}}{(q+1)\sqrt{2qM}} \Big)\\[0.6em]
   && +\,2q \Pr(C^{S}>M).
\end{eqnarray}
By the
 regular variation property
it holds that
\begin{equation*}
\lim_{n \to \infty} \Pr \Big( \frac{|Z_{1}|}{a_{n}} > \sqrt{\frac{\delta}{2qM}} \Big) =0 \qquad \textrm{and} \qquad \lim_{n \to \infty} \Pr \Big( \frac{|Z_{1}|}{a_{n}} > \frac{\sqrt{\delta}}{(q+1)\sqrt{2qM}} \Big)=0,
\end{equation*}
and hence from (\ref{eq:Bnlemmafirst}) we get
$$ \limsup_{n \to \infty} \Pr (A^{U}_{n}) \leq 2q \Pr(C^{S}>M).$$
Letting $M \to \infty$ we conclude
 \be\label{eq:setBn1}
 \lim_{n \to \infty} \Pr (A^{U}_{n}) = 0.
 \ee
Similarly
\be\label{eq:setBn4}
   \lim_{n \to \infty} \Pr (C^{U}_{n})=0.
\ee
By Lemma 2.3 (ii) in Basrak and Krizmani\'{c}~\cite{BaKr}, for $k \geq q$ the difference
$$ \sum_{i=1}^{k} \Big( \sum_{j=0}^{q} C_{j}^{2} \Big) \frac{Z_{i}^{2}}{a_{n}^{2}} - \sum_{i=1}^{k} \sum_{j=0}^{q} \frac{C_{j}^{2}Z_{i-j}^{2}}{a_{n}^{2}}$$
can be represented as
$$ \sum_{u=0}^{q-1}\frac{Z_{k-u}^{2}}{a_{n}^{2}} \sum_{s=u+1}^{q}C_{s}^{2} - \sum_{u=0}^{q-1}\frac{Z_{-u}^{2}}{a_{n}^{2}} \sum_{s=u+1}^{q}C_{s}^{2} =: H_{n}(k) - G_{n}.$$
Therefore
\begin{eqnarray}\label{e:Wn1}
\nonumber \widetilde{W}_{n} \Big( \frac{k}{n} \Big) -  W_{n} \Big( \frac{k}{n} \Big) &=& \frac{1}{a_{n}^{2}} \sum_{i=1}^{k} C^{S}Z_{i}^{2}- \frac{1}{a_{n}^{2}} \sum_{i=1}^{k} \Big( \sum_{j=0}^{q} C_{j}Z_{i-j} \Big)^{2}\\[0.4em]
 \nonumber &\hspace*{-12em} =& \hspace*{-6em} \sum_{i=1}^{k} \Big( \sum_{j=0}^{q} C_{j}^{2} \Big) \frac{Z_{i}^{2}}{a_{n}^{2}} - \sum_{i=1}^{k} \sum_{j=0}^{q} \frac{C_{j}^{2}Z_{i-j}^{2}}{a_{n}^{2}} -  \sum_{i=1}^{k} \sum_{0\leq j <s \leq q} \frac{2C_{j}C_{s}Z_{i-j}Z_{i-s}}{a_{n}^{2}}\\[0.3em]
 &\hspace*{-12em} =& \hspace*{-6em} H_{n}(k) - G_{n} -  \sum_{i=1}^{k} \sum_{0\leq j <s \leq q} \frac{2C_{j}C_{s}Z_{i-j}Z_{i-s}}{a_{n}^{2}}.
\end{eqnarray}
Similarly, for $q \leq k \leq n-q$ Lemma 2.3 (ii) in~\cite{BaKr} yields
\begin{eqnarray*}
\sum_{i=1}^{k} \Big( \sum_{j=0}^{q} C_{j}^{2} \Big) \frac{Z_{i}^{2}}{a_{n}^{2}} - \sum_{i=1}^{k+q} \sum_{j=0}^{q} \frac{C_{j}^{2}Z_{i-j}^{2}}{a_{n}^{2}} &=&  - \sum_{u=0}^{q-1}\frac{Z_{-u}^{2}}{a_{n}^{2}} \sum_{s=u+1}^{q}C_{s}^{2} - \sum_{u=1}^{q}\frac{Z_{k+u}^{2}}{a_{n}2^{}} \sum_{s=0}^{q-u}C_{s}^{2}\\[0.5em]
&=& =: -G_{n} - T_{n}(k),
\end{eqnarray*}
and hence
\begin{eqnarray}\label{e:Wn2}
\nonumber \widetilde{W}_{n} \Big( \frac{k}{n} \Big) -  W_{n} \Big( \frac{k+q}{n} \Big) &=& \frac{1}{a_{n}^{2}} \sum_{i=1}^{k} C^{S}Z_{i}^{2}- \frac{1}{a_{n}^{2}} \sum_{i=1}^{k+q} \Big( \sum_{j=0}^{q} C_{j}Z_{i-j} \Big)^{2}\\[0.4em]
 \nonumber &\hspace*{-12em} =& \hspace*{-6em} \sum_{i=1}^{k} \Big( \sum_{j=0}^{q} C_{j}^{2} \Big) \frac{Z_{i}^{2}}{a_{n}^{2}} - \sum_{i=1}^{k+q} \sum_{j=0}^{q} \frac{C_{j}^{2}Z_{i-j}^{2}}{a_{n}^{2}} - \sum_{i=1}^{k+q} \sum_{0\leq j <s \leq q} \frac{2C_{j}C_{s}Z_{i-j}Z_{i-s}}{a_{n}^{2}}\\[0.3em]
 &\hspace*{-12em} =& \hspace*{-6em} - G_{n} - T_{n}(k) - \sum_{i=1}^{k+q} \sum_{0\leq j <s \leq q} \frac{2C_{j}C_{s}Z_{i-j}Z_{i-s}}{a_{n}^{2}}.
\end{eqnarray}
Let $I_{n}(k) = \sum_{i=1}^{k} \sum_{0\leq j <s \leq q} 2C_{j}C_{s}Z_{i-j}Z_{i-s}/a_{n}^{2}$. By (\ref{e:Wn1}) and (\ref{e:Wn2}) we obtain
\begin{eqnarray}\label{e:BnUnew1}
\nonumber \Pr(B_{n}^{U}) & \leq & \Pr \Big( \exists\,k \in \{q,\ldots,n-q\} \ \textrm{such that} \ |H_{n}(k)-G_{n} - I_{n}(k)| > \delta \\[0.4em]
  \nonumber & & \hspace*{2em} \textrm{and} \ |-G_{n}-T_{n}(k)-I_{n}(k+q)| > \delta \Big)\\[0.4em]
  \nonumber & \leq & \Pr \Big( |G_{n}|> \frac{\delta}{3} \Big) + \Pr \Big(\exists\,k \in \{q,\ldots,n-q\} \ \textrm{such that} \ |I_{n}(k)| > \frac{\delta}{3}\Big)\\[0.4em]
  \nonumber &&  +\Pr \Big(\exists\,k \in \{q,\ldots,n-q\} \ \textrm{such that} \ |I_{n}(k+q)| > \frac{\delta}{3} \Big)\\[0.4em]
  &&  + \sum_{k=q}^{n-q} \Pr \Big( |H_{n}(k)| >
       \frac{\delta}{3} \ \textrm{and} \ |T_{n}(k)| > \frac{\delta}{3} \Big).
\end{eqnarray}
Take an arbitrary $M>0$, and observe that
\begin{eqnarray*}\label{e:BnUnew2}
   \Pr \Big( |G_{n}|> \frac{\delta}{3},\,C^{S} \leq M \Big) & \leq & \Pr \Big( C^{S} \sum_{u=0}^{q-1}\frac{Z_{-u}^{2}}{a_{n}^{2}}> \frac{\delta}{3},\,C^{S} \leq M \Big) \leq \Pr \Big( \sum_{u=0}^{q-1}\frac{Z_{-u}^{2}}{a_{n}}> \frac{\delta}{3M} \Big) \\[0.4em]
   & \leq &  q \Pr \Big( \frac{Z_{1}^{2}}{a_{n}^{2}} > \frac{\delta}{3qM} \Big) = q \Pr \Big( \frac{|Z_{1}|}{a_{n}} > \sqrt{\frac{\delta}{3qM}} \Big),
\end{eqnarray*}
and similarly
\begin{eqnarray*}\label{e:HTnew}
  \nonumber \Pr \Big( |H_{n}(k)| >
       \frac{\delta}{3} \ \textrm{and} \ |T_{n}(k)| > \frac{\delta}{3},\,C^{S} \leq M \Big) & & \\[0.5em]
   \nonumber & \hspace*{-22em} \leq & \hspace*{-11em} \Pr \Big( \sum_{u=0}^{q-1}\frac{Z_{k-u}^{2}}{a_{n}^{2}} >
       \frac{\delta}{3M} \ \textrm{and} \ \sum_{u=1}^{q}\frac{Z_{k+u}^{2}}{a_{n}^{2}} > \frac{\delta}{3M}\Big)\\[0.5em]
   \nonumber & \hspace*{-22em} = & \hspace*{-11em} \Pr \Big( \sum_{u=0}^{q-1}\frac{Z_{k-u}^{2}}{a_{n}^{2}} >
       \frac{\delta}{3M} \Big) \Pr \Big(\sum_{u=1}^{q}\frac{Z_{k+u}^{2}}{a_{n}^{2}} > \frac{\delta}{3M}\Big)\\[0.5em]
   & \hspace*{-22em} \leq & \hspace*{-11em} \bigg[ q \Pr \Big(\frac{|Z_{1}|}{a_{n}} >
       \sqrt{\frac{\delta}{3qM}} \Big) \bigg]^{2},
\end{eqnarray*}
where the equality above holds since the random variables $Z_{i}$ are independent. Hence
\begin{equation*}
 \sum_{k=q}^{n-q} \Pr \Big( |H_{n}(k)| >
       \frac{\delta}{3} \ \textrm{and} \ |T_{n}(k)| > \frac{\delta}{3},\,C^{S} \leq M \Big) \leq \frac{q}{n}\bigg[ n \Pr \Big(\frac{|Z_{1}|}{a_{n}} >
       \sqrt{\frac{\delta}{3qM}} \Big) \bigg]^{2}.
\end{equation*}
An application of the regular variation property yields
\begin{equation}\label{e:BnUnew2}
   \lim_{n \to \infty }\Pr \Big( |G_{n}|> \frac{\delta}{3},\,C^{S} \leq M \Big) =0,
\end{equation}
and
\begin{equation}\label{e:BnUnew3}
 \lim_{n \to \infty} \sum_{k=q}^{n-q} \Pr \Big( |H_{n}(k)| >
       \frac{\delta}{3} \ \textrm{and} \ |T_{n}(k)| > \frac{\delta}{3},\,C^{S} \leq M \Big) = 0.
\end{equation}
Using Markov's inequality
we obtain
\begin{eqnarray*}
  \Pr \Big(\exists\,k \in \{q,\ldots,n-q\} \ \textrm{such that} \ |I_{n}(k)|> \frac{\delta}{3},\,C^{S} \leq M \Big) & &\\[0.4em]
   & \hspace*{-46em} \leq & \hspace*{-23em} \Pr \Big(\exists\,k \in \{q,\ldots,n-q\} \ \textrm{such that} \ \sum_{i=1}^{k} \sum_{0\leq j <s \leq q} \frac{2|C_{j}C_{s}Z_{i-j}Z_{i-s}|}{a_{n}^{2}} > \frac{\delta}{3},\,C^{S} \leq M \Big)\\[0.4em]
   & \hspace*{-46em} \leq & \hspace*{-23em} \Pr \Big( \sum_{i=1}^{n-q} \sum_{0\leq j <s \leq q} \frac{|Z_{i-j}Z_{i-s}|}{a_{n}^{2}} > \frac{\delta}{6M}\Big)\\[0.4em]
    &\hspace*{-46em} \leq & \hspace*{-23em}  \Big( \frac{\delta}{6M} \Big)^{-r} \mathrm{E} \bigg( \sum_{i=1}^{n-q} \sum_{0\leq j <s \leq q} \frac{|Z_{i-j}Z_{i-s}|}{a_{n}^{2}} \bigg)^{r},
\end{eqnarray*}
for some $r \in ( \alpha/2, 1 \wedge \alpha)$. Applying the triangle inequality $|\sum_{i=1}^{\infty}a_{i}|^{r} \leq \sum_{i=1}^{\infty}|a_{i}|^{r}$ for $r \in (0,1]$ and real numbers $a_{1}, a_{2}, \ldots$, and the fact that $(Z_{i})$ is an i.i.d.~sequence we get
\begin{eqnarray}\label{e:BnU}
 \nonumber \Pr \Big(\exists\,k \in \{q,\ldots,n-q\} \ \textrm{such that} \ |I_{n}(k)|> \frac{\delta}{3},\,C^{S} \leq M \Big) & &\\[0.4em]
  \nonumber &\hspace*{-26em} \leq & \hspace*{-13em}  \Big( \frac{\delta}{6M} \Big)^{-r}  \sum_{i=1}^{n-q} \sum_{0\leq j <s \leq q} \mathrm{E} \bigg(\frac{|Z_{i-j}Z_{i-s}|}{a_{n}^{2}} \bigg)^{r}\\[0.4em]
  \nonumber &\hspace*{-26em} = & \hspace*{-13em}  \Big( \frac{\delta}{6M} \Big)^{-r}  \sum_{i=1}^{n-q} \sum_{0\leq j <s \leq q} \frac{\mathrm{E}|Z_{i-j}|^{r} \mathrm{E}|Z_{i-s}|^{r}}{a_{n}^{2r}}\\[0.4em]
  &\hspace*{-26em} = & \hspace*{-13em}  \Big( \frac{\delta}{6M} \Big)^{-r}  (n-q) {q+1 \choose 2} \frac{[\mathrm{E}|Z_{1}|^{r}]^{2}}{a_{n}^{2r}}.
\end{eqnarray}
Since $r < \alpha$ it holds that $\mathrm{E}|Z_{1}|^{r} < \infty$.
By (\ref{e:regvar}) and (\ref{eq:niz}) we have
\begin{equation}\label{e:App1}
 \lim_{n \to \infty} n a_{n}^{-\alpha} L(a_{n}) =1.
\end{equation}
Since $L$ is a slowly varying function, it holds that for all $s>0$ and $t \in \mathbb{R}$, $x^{s}[L(x)]^{t} \to \infty$ and $x^{-s}[L(x)]^{t} \to 0$ as $x \to \infty$ (Bingham et al.~\cite{BiGoTe89}, Proposition 1.3.6). Hence $a_{n}^{2r-\alpha}L(a_{n}) \to \infty$ as $n \to \infty$, and since by (\ref{e:App1})
$$ \lim_{n \to \infty} \frac{n}{a_{n}^{2r}}\,a_{n}^{2r-\alpha}L(a_{n})=1,$$
it follows that $n/a_{n}^{2r} \to 0$ as $n \to \infty$. This and relation (\ref{e:BnU}) imply
\begin{equation}\label{e:BnUnew4}
\lim_{n \to \infty} \Pr \Big(\exists\,k \in \{q,\ldots,n-q\} \ \textrm{such that} \ |I_{n}(k)|> \frac{\delta}{3},\,C^{S} \leq M \Big) = 0,
\end{equation}
and similarly
\begin{equation}\label{e:BnUnew5}
\lim_{n \to \infty} \Pr \Big(\exists\,k \in \{q,\ldots,n-q\} \ \textrm{such that} \ |I_{n}(k+q)|> \frac{\delta}{3},\,C^{S} \leq M \Big) = 0.
\end{equation}
 Therefore by relations (\ref{e:BnUnew1}), (\ref{e:BnUnew2}), (\ref{e:BnUnew3}), (\ref{e:BnUnew4}) and (\ref{e:BnUnew5}) we obtain
 $$ \limsup_{n \to \infty} \Pr (B^{U}_{n}) \leq \limsup_{n \to \infty} \Pr (B^{U}_{n}\cap \{ C^{S} > M \}) \leq \Pr (C^{S} > M),$$
and letting again $M \to \infty$ we conclude
 \be\label{eq:setBn3}
 \lim_{n \to \infty} \Pr (B^{U}_{n}) = 0.
 \ee
From relations (\ref{eq:Un}), (\ref{eq:setBn1}), (\ref{eq:setBn4}) and (\ref{eq:setBn3}) we obtain
  \be\label{eq:Unend}
  \lim_{n \to \infty} \Pr(U_{n} > \delta ) =0.
  \ee

It remains to estimate the second term on the right hand side of (\ref{eq:AB}). By applying the arguments from the proof of Theorem 2.1 in Krizmani\'{c}~\cite{Kr19} we can write
\begin{eqnarray}\label{eq:Zn}
  \nonumber\{T_{n} > \delta \} & \subseteq & \{\exists\,a \in \Gamma_{W_{n}} \ \textrm{such that} \ d(a,b) > \delta \ \textrm{for every} \ b \in \Gamma_{\widetilde{W}_{n}} \} \\[0.6em]
  \nonumber & \subseteq & \{\exists\,k \in \{1,\ldots,2q-1\} \
\ \textrm{such that} \ | W_{n}(k/n) - \widetilde{W}_{n}(k/n)| > \delta \}\\[0.6em]
  \nonumber & & \cup \ \Big\{\exists\,k \in \{2q,\ldots,n\} \
   \textrm{such that} \
  \widetilde{d}(W_{n}(k/n), [\widetilde{W}_{k}^{\min}, \widetilde{W}_{k}^{\max}]) > \delta
    \Big\}\\[0.6em]
  & =: & A^{T}_{n} \cup B^{T}_{n},
\end{eqnarray}
where $\widetilde{d}$ is the Euclidean metric on $\mathbb{R}$,
 $$\widetilde{W}^{\min}_k := \min\Big\{ \widetilde{W}_n \Big( \frac{k-q}{n} \Big), \widetilde{W}_n \Big( \frac{k}{n} \Big) \Big\} = \widetilde{W}_n \Big( \frac{k-q}{n} \Big)$$
 and
$$\widetilde{W}^{\max}_k := \max\Big\{ \widetilde{W}_n \Big( \frac{k-q}{n} \Big), \widetilde{W}_n \Big( \frac{k}{n} \Big) \Big\} = \widetilde{W}_n \Big( \frac{k}{n} \Big).$$
Similarly as before for the set $A^{U}_{n}$ we can show
\be\label{eq:Cnfirst}
\lim_{n \to \infty} \Pr( A^{T}_{n})=0.
\ee
Note that $\Pr (B_n^T)$ is bounded above by
\begin{eqnarray*}
\lefteqn{ \Pr \left(
\exists\,k \in \{2q,\ldots,n\} \ \textrm{such that} \
W_{n} \Big( \frac{k}{n} \Big) > \widetilde{W}^{\max}_k + \delta \right)}\\
&+&
\Pr \left(
\exists\,k \in \{2q,\ldots,n\} \ \textrm{such that} \ W_{n} \Big( \frac{k}{n} \Big)
 < \widetilde{W}^{\min}_k - \delta \right)\,.
\end{eqnarray*}
In the sequel we consider only the first of these two probabilities,
since the other one can be handled in a similar manner.
By using the same arguments as before when dealing with the set $B_{n}^{U}$ (in relations (\ref{e:Wn1}), (\ref{e:Wn2}) and (\ref{e:BnUnew1})), the first probability above
can be bounded by
\begin{eqnarray*}\label{e:BnTnew1}
\nonumber && \hspace*{-5.5em} \Pr \Big(\exists\,k \in \{2q,\ldots,n\} \ \textrm{such that} \
G_n - H_n(k) + I_{n}(k) > \delta \ \textrm{and} \ G_n + T_n(k-q) + I_{n}(k) > \delta \Big)\\[0.4em]
  \nonumber & \hspace*{3em} \leq &  \Pr \Big( G_{n}> \frac{\delta}{3} \Big) + \Pr \Big(\exists\,k \in \{2q,\ldots,n\} \ \textrm{such that} \ I_{n}(k) > \frac{\delta}{3}\Big)\\[0.4em]
 &  & + \sum_{k=2q}^{n} \Pr \Big( H_{n}(k) <
       - \frac{\delta}{3} \ \textrm{and} \ T_{n}(k-q) > \frac{\delta}{3} \Big).
\end{eqnarray*}
From the calculations yielding (\ref{e:BnUnew2}) and (\ref{e:BnUnew4}) we conclude that, as $n \to \infty$,
$$\Pr \Big( G_{n}> \frac{\delta}{3},\,C^{S} \leq M \Big) \to 0 \ \ \textrm{and} \ \Pr \Big(\exists\,k \in \{2q,\ldots,n\} \ \textrm{s.t.} \ I_{n}(k) > \frac{\delta}{3},\,C^{S} \leq M \Big) \to 0$$
for arbitrary $M>0$. Note that the last term on the right-hand side of (\ref{e:BnTnew1}) is equal to zero, since
 $$ H_n(k)= \sum_{u=0}^{q-1}\frac{Z_{k-u}^{2}}{a_{n}^{2}}\sum_{s=u+1}^{q}C_{s}^{2} \geq 0.$$
 Therefore
 $$ \limsup_{n \to \infty} \Pr (B^{T}_{n}) \leq \limsup_{n \to \infty} \Pr (B^{T}_{n}\cap \{ C^{S} > M \}) \leq \Pr (C^{S} > M),$$
 and by letting $M \to \infty$ we obtain
\begin{equation*}\label{eq:BnTnew2}
\lim_{n \to \infty} \Pr( B^{T}_{n})=0.
\end{equation*}
Together with relations (\ref{eq:Zn}) and (\ref{eq:Cnfirst}) this implies
\be\label{eq:Tnend}
\lim_{n \to \infty} \Pr(T_{n}>\delta)=0.
\ee
Now relations (\ref{eq:AB}), (\ref{eq:Unend}) and (\ref{eq:Tnend}) yield (\ref{e:SlutskyM}), and therefore $L_{n} \dto (C^{(1)}V, C^{(2)}W)$ as $n \to \infty$, in $D^{2}$ with the weak $M_{2}$ topology.
This concludes the proof.
\end{proof}

\begin{rem}\label{r:jointdepend1}
From the proof of Proposition~\ref{p:FLT} it follows that the components of the limiting process $(C^{(1)}V, C^{(2)}W)$ can be expressed as functionals of the limiting point process $N = \sum_{i}  \delta_{(T_{i}, P_{i}Q_{i})}$ from relation (\ref{e:BaTa1}), that is
$$ V(t) =  \lim_{u \to 0} \bigg( \sum_{T_{i} \le t} P_{i}Q_{i} 1_{\{ P_{i} > u \}} - t  \int_{u < |x| \leq 1} x\mu(\rmd x) \bigg) + t(p-r)\frac{\alpha}{1-\alpha} 1_{\{\alpha \neq 1\}}$$
and
$$ W(t) =  \lim_{u \to 0} \bigg( \sum_{T_{i} \le t} P_{i}^{2} 1_{\{ P_{i} > u \}} - t  \int_{u < |x| \leq 1} x^{2}\mu(\rmd x) \bigg) + t \frac{\alpha}{2-\alpha},$$
where the limits hold almost surely uniformly on $[0,1]$.
Since $\int_{ |x| \leq 1} x^{2}\mu(\rmd x) = \alpha/(2-\alpha)$ and according to the proofs of Theorem 2 in Davis~\cite{Da83} and Theorem 3.1 in Davis and Hsing~\cite{DaHs95} the points $P_{i}^{2}$, $i=1,2,\ldots$, are almost surely summable, it holds that
$ W(t) = \sum_{T_{i} \le t} P_{i}^{2}.$
Similarly
$ V(t) =  \sum_{T_{i} \le t} P_{i}Q_{i},$
but only when $\alpha <1$.
\end{rem}

\begin{rem}\label{r:M2M1}
Theorem~\ref{t:FinMA} establishes functional convergence of the joint stochastic process $L_{n}$ in the space $D^{2}$ endowed with the weak $M_{2}$ topology induced by the metric $d_{p}^{M_{2}}$ given in (\ref{e:defdp}). Since the second coordinate of $L_{n}$ is nondecreasing, the same arguments used in Remark 3 in Krizmani\'{c}~\cite{Kr18} yield the joint convergence of $L_{n}$ in the $M_{2}$ topology on the first coordinate and in the $M_{1}$ topology on the second coordinate.
\end{rem}

\section{Infinite order linear processes}
\label{S:InfiniteMA}

A standard idea when dealing with infinite order linear processes is to approximate them by a sequence of finite order linear processes, for which the weak convergence holds, and to show that the error of approximation is negligible in the limit. In our case, we will approximate them by finite order linear processes for which Theorem~\ref{t:FinMA} holds, and then we will show that the error of approximation is negligible with respect to the uniform metric. For the following result we will need several moment conditions similar to condition $(\ref{e:momcond})$. Note that condition (\ref{e:momcond}) is implied by the following moment condition
\begin{equation}\label{e:momcond2}
 \sum_{i=0}^{\infty}\mathrm{E}(|C_{i}|^{\delta} + |C_{i}|^{2 \delta}) < \infty \qquad \textrm{for some} \ 0 < \delta < \frac{\alpha}{2}.
\end{equation}


\begin{thm}\label{t:InfMA}
Let $(X_{i})$ be a linear process defined by
$$ X_{i} = \sum_{j=0}^{\infty}C_{j}Z_{i-j}, \qquad i \in \mathbb{Z},$$
where $(Z_{i})_{i \in \mathbb{Z}}$ is an i.i.d.~sequence of regularly varying random variables satisfying $(\ref{e:regvar})$ and $(\ref{eq:pq})$ with $\alpha \in (0,2)$. Assume conditions $(\ref{e:oceknula})$ and $(\ref{e:sim})$ hold. Let $(C_{i})_{i \geq 0}$ be a sequence of random variables, independent of $(Z_{i})$, satisfying conditions $(\ref{eq:InfiniteMAcond})$, $(\ref{e:momcond2})$,
\begin{equation}\label{c:momcs}
\sum_{j=0}^{\infty} (\mathrm{E}|C_{j}|^{2r})^{1/2} < \infty \qquad \textrm{for some}  \ r \in (\alpha/2, 1 \wedge \alpha),
\end{equation}
and
\begin{equation}\label{e:modnew1}
         \sum_{i=0}^{\infty}\mathrm{E}(|C_{i}|^{\gamma} + |C_{i}|^{2 \gamma}) < \infty
\end{equation}
for some $\gamma \in (\alpha, 1)$ if $\alpha \in (0,1)$ and $\gamma \in (\alpha/2, 1)$ if $\alpha \in [1,2)$.
If $\alpha \in [1,2)$ suppose further
\begin{equation}\label{eq:infmaTK3}
\sum_{j=0}^{\infty} \mathrm{E}|C_{j}| < \infty.
\end{equation}
Then, as $n \to \infty$,
\begin{equation}\label{e:mainconvInfMA}
L_{n}(\,\cdot\,) \dto  (C^{(1)}V(\,\cdot\,), C^{(2)}W(\,\cdot\,))
\end{equation}
 in $D^{2}$ with the weak $M_{2}$ topology,
 where $V$ is an $\alpha$--stable L\'{e}vy process with characteristic triple $(0, \mu, b)$, with $\mu$ as in $(\ref{eq:mu})$,
$$ b = \left\{ \begin{array}{cc}
                                   0, & \quad \alpha = 1,\\[0.4em]
                                   (p-r)\frac{\alpha}{1-\alpha}, & \quad \alpha \in (0,1) \cup (1,2),
                                 \end{array}\right.$$
with $p$ and $r$ defined in $(\ref{eq:pq})$, $W$ is an $\alpha/2$--stable L\'{e}vy process with characteristic triple $(0, \widetilde{\mu}, \alpha/(2-\alpha))$ with $\widetilde{\mu}(\rmd x) = (\alpha /2) x^{-\alpha/2-1} 1_{(0,\infty)}(x)\,\rmd x$,
and $(C^{(1)}, C^{(2)})$ is a random vector, independent of $(V, W)$, such that
$(C^{(1)}, C^{(2)}) \eind (C, C^{S})$, where $C=\sum_{i=0}^{\infty}C_{i}$ and $C^{S}=\sum_{i=0}^{\infty}C_{i}^{2}$.
\end{thm}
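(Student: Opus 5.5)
The strategy is the standard finite-order approximation combined with Theorem~3.2 of Billingsley (equivalently Theorem 3.5 in Resnick~\cite{Resnick07}). For $q \in \mathbb{N}$ we cannot simply truncate $X_i$ at lag $q$, because the truncated coefficients may violate (\ref{eq:FiniteMAcond}). Following Krizmani\'{c}~\cite{Kr19}, we therefore modify the last coefficient. Set $C_j^{(q)} = C_j$ for $j<q$ and $C_q^{(q)} = \sum_{j\ge q} C_j$. Then the partial sums $\sum_{i=0}^{s}C_i^{(q)}$ coincide with those of the original sequence for $s<q$, and their total equals $C$. Hence (\ref{eq:InfiniteMAcond}) gives (\ref{eq:FiniteMAcond}) for the new coefficients.

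With $X_i^{(q)}=\sum_{j=0}^q C_j^{(q)}Z_{i-j}$ and $L_n^{(q)}=(V_n^{(q)},W_n^{(q)})$, Theorem~\ref{t:FinMA} gives
\[
L_n^{(q)} \dto (C^{(1)}V, C^{(2,q)}W) \qquad \text{as } n \to \infty,
\]
in the weak $M_2$ topology. Here $(C^{(1)},C^{(2,q)}) \eind (C, C^{S,q})$ with $C^{S,q}=\sum_{j<q}C_j^2+(\sum_{j\ge q}C_j)^2$. As $q\to\infty$ we have $C^{S,q}\to C^S$ a.s., since $\sum_{j\ge q}C_j\to 0$. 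Consequently $(C^{(1)}V,C^{(2,q)}W)\dto(C^{(1)}V,C^{(2)}W)$: the joint law of $(C,C^{S,q})$ converges, independence from $(V,W)$ is preserved, and multiplication by a constant-in-time random factor is continuous, exactly as in the argument with $g$ in Proposition~\ref{p:FLT}.

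It then remains to show that for every $\epsilon>0$,
\[
\lim_{q\to\infty}\limsup_{n\to\infty}\Pr\bigl(d_p^{M_2}(L_n^{(q)},L_n)>\epsilon\bigr)=0 .
\]
Since $d_{M_2}$ is bounded by the uniform metric, it suffices to treat $\sup_t|V_n(t)-V_n^{(q)}(t)|$ and $\sup_t|W_n(t)-W_n^{(q)}(t)|$ separately. For the first component,
\[
X_i-X_i^{(q)}=\sum_{j>q}C_j(Z_{i-j}-Z_{i-q}).
\]
Summing over $i\le k$ and interchanging sums rewrites $\sum_{i\le k}(X_i-X_i^{(q)})$ as boundary terms plus terms of the form $\sum_{j>q}C_j\sum_{i}(Z_{i-j}-Z_{i-q})$; these telescope into blocks of length $j-q$ near $0$ and near $k$. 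The resulting bound is $\max_k$ of sums like $a_n^{-1}\sum_{j>q}|C_j|\sum_{l=k-j+1}^{k-q}|Z_l|$. Since this estimate for $V_n$ is exactly the one carried out in Krizmani\'{c}~\cite{Kr19} for the infinite-order case, it is handled there using (\ref{e:momcond2}), (\ref{e:modnew1}) and, when $\alpha\ge1$, (\ref{eq:infmaTK3}). I plan to reproduce it: split $Z$ into $|Z|\le a_n$ and $|Z|>a_n$ parts, apply Markov's inequality with exponent $\gamma$ (or $1$) to the truncated part using Karamata, and with exponent $\delta$ to the large part, obtaining bounds of the form $\mathrm{const}\cdot\sum_{j>q}\mathrm{E}(|C_j|^{\gamma}+\dots)\to0$.

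The second component is the main obstacle. I would expand
\[
X_i^2-(X_i^{(q)})^2=(X_i-X_i^{(q)})(X_i+X_i^{(q)}).
\]
Equivalently, write $X_i^2=\sum_j C_j^2Z_{i-j}^2+2\sum_{j<s}C_jC_sZ_{i-j}Z_{i-s}$, and do the same for $X_i^{(q)}$. The cross terms (products of distinct $Z$'s) I handle globally over all $i\le n$ by Markov's inequality with exponent $r\in(\alpha/2,1\wedge\alpha)$ as in (\ref{e:BnU}): after conditioning on the $C$'s and using independence,
\[
\mathrm{E}|C_jC_s|^r\le(\mathrm{E}|C_j|^{2r})^{1/2}(\mathrm{E}|C_s|^{2r})^{1/2}
\]
by Cauchy--Schwarz. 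Condition (\ref{c:momcs}) then makes $\sum_{j<s}\mathrm{E}|C_jC_s|^r$ finite, and $n/a_n^{2r}\to0$ kills these terms; this holds even without letting $q\to\infty$ for the pure cross parts. The diagonal parts satisfy
\[
\sum_{i\le k}\Bigl(\sum_{j>q}C_j^2Z_{i-j}^2-\Bigl(\sum_{j\ge q}C_j\Bigr)^2 Z_{i-q}^2\Bigr)
\]
in absolute value at most $a_n^{-2}\sum_{i\le n}\sum_{j\ge q}C_j^2Z_{i-j}^2$ plus $(\sum_{j\ge q}|C_j|)^2a_n^{-2}\sum_{i\le n}Z_{i-q}^2$. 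The second term is tight (it converges like $W$) times a factor tending to $0$ a.s. For the first, I would use Markov with exponent $\gamma\wedge\delta<1$ together with the truncation at $a_n$ and Karamata, as in Proposition~\ref{p:FLT}. This gives a bound of $\mathrm{const}\cdot\sum_{j\ge q}\mathrm{E}|C_j|^{2\delta}$, or of $\sum_{j\ge q}\mathrm{E}|C_j|^{2\gamma}$, which tends to $0$ by (\ref{e:momcond2}) and (\ref{e:modnew1}). The delicate point is to bookkeep the indices $i-j\le0$ (infinitely many past innovations) uniformly in $n$; the summable moment conditions are exactly what controls them.
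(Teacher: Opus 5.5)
Your overall architecture matches the paper's. You use the same modified approximation with last coefficient $C_*^q=\sum_{j\ge q}C_j$ and apply Theorem~\ref{t:FinMA} for fixed $q$. You pass $C^{S,q}\to C^S$ through independence and the multiplication map. For the second coordinate you use the same split into diagonal terms (Markov with exponent $\gamma$ on $\{|Z|\le a_n\}$ and $\delta$ on $\{|Z|>a_n\}$) and cross terms (exponent $r$, Cauchy--Schwarz, (\ref{c:momcs}), $n/a_n^{2r}\to0$). Your treatment of $(C_*^q)^2a_n^{-2}\sum_{i\le n}Z_{i-q}^2$ as a tight sequence times a factor tending to $0$ a.s.\ is correct, and cleaner than the paper's moment bound for $K_3$. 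Two small slips there: the diagonal difference is $\sum_{j\ge q}C_j^2Z_{i-j}^2-(C_*^q)^2Z_{i-q}^2$, with $j\ge q$. And the truncated diagonal part needs exponent $\gamma$, not $\gamma\wedge\delta=\delta$, since $2\delta<\alpha$.

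The genuine gap is the first coordinate when $\alpha\in[1,2)$. The recipe you intend to reproduce is Markov's inequality with exponent $\gamma$ or $1$ applied to the truncated innovations, together with Karamata. It fails in this range:
\begin{itemize}
\item With exponent $\gamma$: here $\gamma\in(\alpha/2,1)$, so $\gamma<\alpha$, and $n\,\mathrm{E}[(|Z_1|/a_n)^{\gamma}1_{\{|Z_1|\le a_n\}}]\sim n a_n^{-\gamma}\mathrm{E}|Z_1|^{\gamma}\to\infty$.
\item With exponent $1$: the analogous quantity is of order $n/a_n\to\infty$ for $\alpha>1$, and for $\alpha=1$ it still diverges (slowly, by Karamata at the boundary).
\end{itemize}
Putting absolute values inside your telescoped blocks has the same defect. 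It discards the cancellation, and the hypotheses give no decay of $\mathrm{E}|C_j|$ in $j$ that could compensate for block lengths $j-q$.

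The missing idea is cancellation. For $\alpha\ge1$ the truncated variables must be centered, which is where $\mathrm{E}Z_1=0$ (resp.\ symmetry) is used, and they must be controlled in $L^2$ through a maximal inequality. The paper does this as follows:
\begin{itemize}
\item It writes $V_n-V_{n,q}$ as $\sum_{i\le\lfloor nt\rfloor}\sum_{j\ge q}\widetilde C_j\,(Z^{\le}_{n,i-j}+Z^{>}_{n,i-j})$, with $\widetilde C_q=C_q-C_*^q$ and centered $Z^{\le}_{n,j}$.
\item By Cauchy--Schwarz in $j$ and Markov, it bounds the truncated part by
\[
\mathrm{E}\sum_{j\ge q}|\widetilde C_j| \;+\; 4\epsilon^{-2}\sum_{j\ge q}\mathrm{E}|\widetilde C_j|\,\sup_k\mathrm{E}\max_{l\le n}\Bigl|\sum_{i\le l}Z^{\le}_{n,i-k}\Bigr|^2 .
\]
\item Tyran-Kami\'{n}ska's maximal bound keeps the supremum bounded in $n$. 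It is a Doob-type inequality combined with $n\mathrm{E}(Z_1^2\,1_{\{|Z_1|\le a_n\}})/a_n^2\to\alpha/(2-\alpha)$. This is precisely where (\ref{eq:infmaTK3}) enters.
\item The large part is handled by Markov with exponent $1$ for $\alpha\in(1,2)$, using $n\mathrm{E}(|Z_1|\,1_{\{|Z_1|>a_n\}})/a_n\to\alpha/(\alpha-1)$. For $\alpha=1$ it uses exponent $\delta$, where symmetry removes the centering constant.
\end{itemize}
As written, your plan for $V_n$ covers only $\alpha\in(0,1)$. There the paper does bound $\sum_{i\le n}|X_i-X_{i,q}|/a_n$ directly by the $\gamma$/$\delta$ Markov argument.
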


\begin{proof}
For $q \in \mathbb{N}$ define
$$ X_{i,q} = \sum_{j=0}^{q-1}C_{j}Z_{i-j} + C_{*}^{q} Z_{i-q}, \qquad i \in \mathbb{Z},$$
where $C_{*}^{q}= \sum_{j=q}^{\infty}C_{j}$, and let
$$ V_{n, q}(t) = \sum_{i=1}^{\floor{nt}} \frac{X_{i,q}}{a_{n}} \qquad \textrm{and} \qquad  W_{n, q}(t) = \sum_{i=1}^{\floor{nt}} \frac{X_{i,q}^{2}}{a_{n}^{2}}, \qquad t \in [0,1].$$
Condition (\ref{eq:InfiniteMAcond}) implies that coefficients $C_{0}, \ldots, C_{q-1}, C_{*}^{q}$ satisfy condition (\ref{eq:FiniteMAcond}), and hence an application of Theorem~\ref{t:FinMA} to the finite order linear process $(X_{i,q})_{i}$ yields that
\begin{equation}\label{e:SlutskyINFMA1}
 L_{n, q}(\,\cdot\,) := (V_{n, q}(\,\cdot\,), W_{n, q}(\,\cdot\,))  \dto L^{q}(\,\cdot\,) \qquad \textrm{as} \ n \to \infty,
 \end{equation}
in $D^{2}$ with the weak $M_{2}$ topology, with
$L^{q} =(C^{(1)}V, C^{(2)}_{q}W)$, where $V$ and $W$ are stable L\'{e}vy processes as in Theorem~\ref{t:FinMA}, and $(C^{(1)}, C^{(2)}_{q})$ is a random vector, independent of $(V, W)$, such that
$(C^{(1)}, C^{(2)}_{q}) \eind (C, C^{S,q})$, with
$$C^{S,q} = \sum_{j=0}^{q-1}C_{j}^{2} + (C_{*}^{q})^{2}.$$
Since $\sum_{j=0}^{\infty}|C_{j}| < \infty$ a.s.~(which follows from condition (\ref{e:momcond2})), it is straightforward to obtain
$ C^{S,q} \to C^{S}$
almost surely as $q \to \infty$. Therefore
$$ \| (B, B^{S,q}) - (B, B^{S})\|_{[0,1]} \to 0$$
almost surely as $q \to \infty$, where $B(t)= C$, $B^{S,q}(t)=C^{S,q}$ and $B^{S}(t)=C^{S}$ for $t \in [0,1]$. Since uniform convergence implies Skorokhod $M_{1}$ convergence, it follows that
$d_{p}^{M_{1}}( (B, B^{S,q}), (B, B^{S})  ) \to 0$
almost surely as $q \to \infty$, and hence, since almost sure convergence implies convergence in distribution, we have
\begin{equation*}\label{e:contmtKall}
(B, B^{S,q}) \dto (B, B^{S}) \qquad \textrm{as} \ q \to \infty,
\end{equation*}
in $D^{2}$ with the weak $M_{1}$ topology. An application of Theorem 3.29 and Corollary 5.18 in Kallenberg~\cite{Ka97} yields
\begin{equation}\label{e:contmtKall2}
 (B^{(1)}, B^{(2)}_{q}, V, W) \dto (B^{(1)}, B^{(2)}, V, W)
\end{equation}
in $D^{4}$ with the product $M_{1}$ topology, where $B^{(1)}(t)=C^{(1)}$, $B^{(2)}_{q}(t)=C^{(2)}_{q}$ and $B^{(2)}(t)=C^{(2)}$ for $t \in [0,1]$, and $(C^{(1)}, C^{(2)})$ is a random vector, independent of $(V, W)$, such that $(C^{(1)}, C^{(2)}) \eqd (C, C^{S})$.

Now, as in the proof of Proposition~\ref{p:FLT}, an application of the continuous mapping theorem to the convergence relation in (\ref{e:contmtKall2}), with the function $g$ given by
$$ g(x) = (x_{1}  x_{3}, x_{2}  x_{4}), \qquad x=(x_{1}, \ldots, x_{4}) \in D^{4},$$
yields
$$g (B^{(1)}, B^{(2)}_{q}, V, W) \dto g(B^{(1)}, B^{(2)}, V, W),$$
 as $q \to \infty$, that is
 \begin{equation}\label{e:SlutskyINFMA2}
L^{q} = (C^{(1)} V, C^{(2)}_{q} W) \dto L := (C^{(1)}V, C^{(2)}W)
 \end{equation}
in $D^{2}$ with the weak $M_{1}$ topology, and then also with the weak $M_{2}$ topology.

If we show that for every $\epsilon >0$
\begin{equation}\label{e:Slutskyinf01}
 \lim_{q \to \infty} \limsup_{n \to \infty}\Pr[d_{p}^{M_{2}}(L_{n}, L_{n,q})> \epsilon]=0,
\end{equation}
then from relations (\ref{e:SlutskyINFMA1}) and (\ref{e:SlutskyINFMA2}) by a generalization of Slutsky's theorem it will follow that $L_{n}(\,\cdot\,) \dto L(\,\cdot\,)$ in $D^{2}$ with the weak $M_{2}$ topology.  By the definition of the metric $d_{p}^{M_{2}}$ it suffices to show that
\begin{equation}\label{e:SlutskyINFV}
\lim_{q \to \infty} \limsup_{n \to \infty} \Pr [ d_{M_{2}}(V_{n}, V_{n,q}) > \epsilon ]=0
\end{equation}
and
\begin{equation}\label{e:SlutskyINFW}
\lim_{q \to \infty} \limsup_{n \to \infty} \Pr [ d_{M_{2}}(W_{n}, W_{n,q}) > \epsilon]=0.
\end{equation}
Since the metric $d_{M_{2}}$ is bounded above by the uniform metric note that
$$ \Pr [ d_{M_{2}}(V_{n}, V_{n,q}) > \epsilon]  \leq  \Pr \bigg( \sup_{t \in [0,1]}|V_{n}(t) - V_{n, q}(t)|> \epsilon \bigg)
  \leq  \Pr \bigg( \sum_{i=1}^{n}\frac{|X_{i}-X_{i,q}|}{a_{n}} > \epsilon \bigg).$$
In the case $\alpha \in (0,1)$ by repeating the arguments from the proof of Theorem 3.1 in Krizmani\'{c}~\cite{Kr19}, from conditions (\ref{e:momcond2}) and (\ref{e:modnew1}) we obtain
\begin{equation*}\label{e:infSlutsky}
 \lim_{q \to \infty} \limsup_{n \to \infty} \Pr \bigg( \sum_{i=1}^{n}\frac{|X_{i}-X_{i,q}|}{a_{n}} > \epsilon \bigg)=0,
\end{equation*}
and relation (\ref{e:SlutskyINFV}) holds.
In the case $\alpha \in [1,2)$ define
$$Z_{n,j}^{\leq} = \frac{Z_{j}}{a_{n}} 1_{\big\{ \frac{|Z_{j}|}{a_{n}} \leq 1 \big\}} - \mathrm{E} \Big( \frac{Z_{1}}{a_{n}} 1_{\big\{ \frac{|Z_{1}|}{a_{n}} \leq 1 \big\}} \Big) \quad \textrm{and} \quad Z_{n,j}^{>} =  \frac{Z_{j}}{a_{n}} 1_{\big\{ \frac{|Z_{j}|}{a_{n}} > 1 \big\}} + \mathrm{E} \Big( \frac{Z_{1}}{a_{n}} 1_{\big\{ \frac{|Z_{1}|}{a_{n}} \leq 1 \big\}} \Big) $$
  for $j \in \mathbb{Z}$ and $n \in \mathbb{N}$, and observe that $Z_{n,j}^{\leq} + Z_{n,j}^{>} = Z_{j}/a_{n}$. Let
$$ \widetilde{C}_{j} = \left\{ \begin{array}{cl}
                                   C_{j}, & \quad \textrm{if} \ j \geq q+1,\\[0.4em]
                                   C_{q}-C_{*}^{q}, & \quad \textrm{if} \ j=q.
                                 \end{array}\right.$$
and observe that
\begin{eqnarray*}
\nonumber  V_{n}(t) - V_{n,q}(t) & = & \sum_{i=1}^{\floor{nt}} \frac{1}{a_{n}} \bigg( \sum_{j=q}^{\infty}C_{j}Z_{i-j} -C_{*}^{q}Z_{i-q} \bigg) \\[0.4em]
& = &  \sum_{i=1}^{\floor{nt}}  \sum_{j=q}^{\infty} \widetilde{C}_{j}Z_{n,i-j}^{\leq} + \sum_{i=1}^{\floor{nt}}  \sum_{j=q}^{\infty} \widetilde{C}_{j}Z_{n,i-j}^{>}.
\end{eqnarray*}
Therefore
\begin{eqnarray}\label{eq:I1I2}
\nonumber   \Pr[d_{M_{2}}(V_{n}, V_{n,q})> \epsilon] & \leq & \Pr \bigg( \sup_{ t \in [0,1]} |V_{n}(t) - V_{n,q}(t)| > \epsilon \bigg)\\[0.4em]
\nonumber & \hspace*{-18em} \leq & \hspace*{-9em} \Pr \bigg( \max_{1 \leq l \leq n} \bigg| \sum_{i=1}^{l} \sum_{j=q}^{\infty} \widetilde{C}_{j}Z_{n,i-j}^{\leq} \bigg| > \frac{\epsilon}{2} \bigg) + \Pr \bigg( \max_{1 \leq l \leq n} \bigg| \sum_{i=1}^{l} \sum_{j=q}^{\infty} \widetilde{C}_{j}Z_{n,i-j}^{>} \bigg| > \frac{\epsilon}{2} \bigg)\\[0.4em]
& \hspace*{-18em} =: & \hspace*{-9em} I_{1} + I_{2}.
\end{eqnarray}
Now following the arguments from the proof of Theorem 3.1 in Krizmani\'{c}~\cite{Kr22-1} we obtain, by
H\"{o}lder's and Markov's inequalities and the fact that the sequence $(C_{i})_{i \geq 0}$ is independent of $(Z_{i})$,
\begin{equation}\label{e:TKcond}
I_{1} \leq \mathrm{E} \bigg( \sum_{j=q}^{\infty}|\widetilde{C}_{j}| \bigg) + \frac{4}{\epsilon^{2}} \sum_{j=q}^{\infty} \mathrm{E} |\widetilde{C}_{j}| \cdot \sup_{k \geq q} \mathrm{E} \bigg( \max_{1 \leq l \leq n} \bigg| \sum_{i=1}^{l}Z_{n,i-k}^{\leq} \bigg|^{2} \bigg).
\end{equation}
Since $(Z_{i})$ is a sequence of i.i.d.~random variables, by Corollary 2 in Tyran-Kami\'{n}ska~\cite{Ty10b} 
it holds that
$$\limsup_{n \to \infty} \sup_{k \geq 0} \mathrm{E} \bigg( \max_{1 \leq l \leq n} \bigg| \sum_{i=1}^{l}Z_{n,i-k}^{\leq} \bigg|^{2} \bigg) < \infty.$$
Therefore from (\ref{e:TKcond}), by noting that $ \sum_{j=q}^{\infty}|\widetilde{C}_{j}| \leq 2 \sum_{j=q}^{\infty}|C_{j}|$, we conclude that there exists a positive constant $D_{1}$ such that for all $q \in \mathbb{N}$ it holds that
\begin{equation}\label{eq:I1}
 \limsup_{n \to \infty} I_{1} \leq D_{1} \sum_{j=q}^{\infty}\mathrm{E} |C_{j}|.
\end{equation}
In order to estimate $I_{2}$ we consider separately the cases $\alpha \in (1,2)$ and $\alpha=1$. Assume first $\alpha \in (1,2)$. Note that by (\ref{e:oceknula})
$$ Z_{n,j}^{>} =  \frac{Z_{j}}{a_{n}} 1_{\big\{ \frac{|Z_{j}|}{a_{n}} > 1 \big\}} - \mathrm{E} \Big( \frac{Z_{1}}{a_{n}} 1_{\big\{ \frac{|Z_{1}|}{a_{n}} > 1 \big\}} \Big). $$
 Applying Markov's inequality, the fact that the sequence $(C_{i})_{i \geq 0}$ is independent of $(Z_{i})$ and the stationarity of the sequence $(Z_{i})$ we obtain
\begin{eqnarray}\label{eq:alpha1}
\nonumber I_{2} & \leq &   \Pr \bigg(  \sum_{i=1}^{n} \bigg| \sum_{j=q}^{\infty} \widetilde{C}_{j}Z_{n,i-j}^{>} \bigg| > \frac{\epsilon}{2} \bigg) \leq  \frac{2}{\epsilon}\,\mathrm{E} \bigg(  \sum_{i=1}^{n} \bigg| \sum_{j=q}^{\infty} \widetilde{C}_{j}Z_{n,i-j}^{>} \bigg|  \bigg)\\[0.4em]
 & \leq & \frac{4n}{\epsilon a_{n}}  \sum_{j=q}^{\infty} \mathrm{E} |\widetilde{C}_{j}| \cdot \mathrm{E} \Big( |Z_{1}| 1 _{\{ |Z_{1}|>a_{n} \}} \Big)
\end{eqnarray}
By Karamata's theorem
$$ \lim_{n \to \infty} \frac{n}{ a_{n}} \mathrm{E} \Big( |Z_{1}| 1 _{\{ |Z_{1}|>a_{n} \}} \Big) = \frac{\alpha}{\alpha-1},$$
 and hence from (\ref{eq:alpha1}) it follows that there exists a positive constant $D_{2}$ such that
\begin{equation}\label{eq:I2a}
 \limsup_{n \to \infty} I_{2} \leq D_{2} \sum_{j=q}^{\infty} \mathrm{E} |C_{j}|.
\end{equation}
Now assume $\alpha=1$. Markov's inequality implies
$$ I_{2} \leq  \frac{2^{\delta}}{\epsilon^{\delta}}  \mathrm{E} \bigg(  \sum_{i=1}^{n} \bigg| \sum_{j=q}^{\infty} \widetilde{C}_{j}Z_{n,i-j}^{>} \bigg| \bigg)^{\delta},$$
with $\delta$ as in relation (\ref{e:momcond2}). Note that by (\ref{e:sim}) it holds that $Z_{n,i-j}^{>} =  a_{n}^{-1}Z_{i-j}  1_{\{ |Z_{i-j}|>a_{n} \}}$.
Since $\delta < 1$, a double application of the triangle inequality $|\sum_{i=1}^{\infty}a_{i}|^{s} \leq \sum_{i=1}^{\infty}|a_{i}|^{s}$ with $s \in (0,1]$ yields
\begin{eqnarray*}
I_{2} & \leq & \frac{2^{\delta}}{\epsilon^{\delta}} \sum_{i=1}^{n} \mathrm{E} \bigg( \bigg| \sum_{j=q}^{\infty} \widetilde{C}_{j}Z_{n,i-j}^{>} \bigg|^{\delta} \bigg)
       \leq  \frac{2^{\delta}}{\epsilon^{\delta} a_{n}^{\delta}} \sum_{i=1}^{n} \sum_{j=q}^{\infty} \mathrm{E}  \bigg( \bigg| \widetilde{C}_{j}Z_{i-j} 1_{\{ |Z_{i-j}|>a_{n} \}} \bigg|^{\delta} \bigg).
\end{eqnarray*}
Using again the fact that $(C_{i})$ is independent of $(Z_{i})$ and the stationarity of $(Z_{i})$ we obtain
$$ I_{2} \leq \frac{2^{\delta} n}{\epsilon^{\delta} a_{n}^{\delta}} \mathrm{E} \Big( |Z_{1}|^{\delta} 1 _{\{ |Z_{1}|>a_{n} \}} \Big) \sum_{j=q}^{\infty} \mathrm{E} |\widetilde{C}_{j}|^{\delta}.$$
From this, since by Karamata's theorem
$$ \lim_{n \to \infty} \frac{n}{a_{n}^{\delta}} \mathrm{E} \Big( |Z_{1}|^{\delta} 1 _{\{ |Z_{1}|>a_{n} \}} \Big) = \frac{1}{1-\delta},$$
it follows that there exists a positive constant $D_{3}$ such that
\begin{equation*}\label{eq:I2b}
\limsup_{n \to \infty} I_{2} \leq D_{3} \sum_{j=q}^{\infty} \mathrm{E} |C_{j}|^{\delta}.
\end{equation*}
This together with (\ref{eq:I1I2}), (\ref{eq:I1}) and (\ref{eq:I2a}) shows that
$$ \limsup_{n \to \infty}\Pr[d_{M_{2}}(V_{n}, V_{n,q})> \epsilon] \leq D_{1} \sum_{j=q}^{\infty} \mathrm{E}|C_{j}| + (D_{2}+D_{3}) \sum_{j=q}^{\infty} \mathrm{E}|C_{j}|^{s},$$
 where
 $$ s = \left\{ \begin{array}{cc}
                                   \delta, & \quad \textrm{if} \ \alpha = 1,\\[0.4em]
                                   1, & \quad \textrm{if} \ \alpha \in (1,2).
                                 \end{array}\right.$$
Finally, the dominated convergence theorem and conditions (\ref{e:momcond2}) and (\ref{eq:infmaTK3}) imply relation (\ref{e:SlutskyINFV}) for $\alpha \in [1,2)$.

Now we turn our attention to relation (\ref{e:SlutskyINFW}).
Since
\begin{equation*}
 X_{i}^{2}  =  \Big( \sum_{j=0}^{\infty}C_{j}Z_{i-j} \Big)^{2} = \sum_{j=0}^{\infty}C_{j}^{2}Z_{i-j}^{2} + 2 \sum_{0 \leq j < s < \infty}C_{j}C_{s}Z_{i-j}Z_{i-s}
\end{equation*}
and
\begin{eqnarray*}
  X_{i,q}^{2} &=&  \Big( \sum_{j=0}^{q-1}C_{j}Z_{i-j} +  C_{*}^{q}Z_{i-q} \Big)^{2}\\[0.3em]
  &=& \sum_{j=0}^{q-1}C_{j}^{2}Z_{i-j}^{2} + (C_{*}^{q})^{2}Z_{i-q}^{2} + 2 \sum_{0 \leq j < s \leq q-1}C_{j}C_{s}Z_{i-j}Z_{i-s} +  2\sum_{j=0}^{q-1}C_{j}C_{*}^{q}Z_{i-j}Z_{i-q},
\end{eqnarray*}
we have
\begin{eqnarray*}
  \sum_{i=1}^{n}\frac{|X_{i}^{2}-X_{i,q}^{2}|}{a_{n}^{2}} &\leq &   \sum_{i=1}^{n} \sum_{j=q}^{\infty}\frac{C_{j}^{2}Z_{i-j}^{2}}{a_{n}^{2}} + 2 \sum_{i=1}^{n} \sum_{0 \leq j < s < \infty}\frac{|C_{j}C_{s}Z_{i-j}Z_{i-s}|}{a_{n}^{2}}\\[0.3em]
 & \hspace*{-13em} & \hspace*{-6.5em} +  \sum_{i=1}^{n} \frac{(C_{*}^{q})^{2}Z_{i-q}^{2}}{a_{n}^{2}} +  2\sum_{i=1}^{n} \sum_{0 \leq j < s \leq q-1}\frac{|C_{j}C_{s}Z_{i-j}Z_{i-s}|}{a_{n}^{2}} +  2 \sum_{i=1}^{n} \sum_{j=0}^{q-1}\frac{|C_{j}C_{*}^{q}Z_{i-j}Z_{i-q}|}{a_{n}^{2}}\\[0.6em]
  & \hspace*{-13em} =:& \hspace*{-6.5em} K_{1} + K_{2} + K_{3} + K_{4} + K_{5},
\end{eqnarray*}
and therefore
\begin{eqnarray}\label{e:M2Wnq}
\nonumber \Pr [ d_{M_{2}}(W_{n}, W_{n,q}) > \epsilon]  &\leq&  \Pr \bigg( \sup_{t \in [0,1]}|W_{n}(t) - W_{n, q}(t)|> \epsilon \bigg)\\[0.4em]
\nonumber & \hspace*{-14em} \leq & \hspace*{-7em} \Pr \bigg( \sup_{t \in [0,1]} \sum_{i=1}^{\lfloor nt \rfloor} \frac{|X_{i}^{2} - X_{i,q}^{2}|}{a_{n}^{2}}> \epsilon \bigg) = \Pr \bigg(\sum_{i=1}^{n} \frac{|X_{i}^{2} - X_{i,q}^{2}|}{a_{n}^{2}}> \epsilon \bigg)\\[0.4em]
& \hspace*{-14em} \leq & \hspace*{-7em} \sum_{i=1}^{5} \Pr \Big( K_{i} > \frac{\epsilon}{5} \Big).
\end{eqnarray}
In order to bound the probability $\Pr(K_{1} > \epsilon/5)$ note that
$$K_{1} = \sum_{i=-\infty}^{0}\frac{Z_{i-q+1}^{2}}{a_{n}^{2}} \sum_{j=0}^{n-1}C_{q-i+j}^{2} + \sum_{i=1}^{n-1}\frac{Z_{i-q+1}^{2} }{a_{n}^{2}} \sum_{j=i}^{n-1}C_{q-i+j}^{2}.$$
Let
$$  D_{i,n,q} =  \left\{ \begin{array}{cl}
                                   \displaystyle \sum_{j=0}^{n-1} C_{q-i+j}^{2}, & \quad  i \leq 0,\\[1.4em]
                                   \displaystyle  \sum_{j=i}^{n-i}C_{q-i+j}^{2}, & \quad i=1,\ldots,n-1,
                                 \end{array}\right.$$
and
$$ \widetilde{Z}^{\leq}_{i,n} = \frac{Z_{i}^{2}}{a_{n}^{2}} 1_{\big\{ \frac{|Z_{i}|}{a_{n}} \leq 1 \big\}} \qquad \textrm{and} \qquad
\widetilde{Z}^{>}_{i,n} = \frac{Z_{i}^{2}}{a_{n}^{2}} 1_{\big\{ \frac{|Z_{i}|}{a_{n}} > 1 \big\}}.$$
Then
\begin{equation}\label{e:Diqn1}
\Pr \Big( K_{1} > \frac{\epsilon}{5} \Big) \leq \Pr \bigg(\sum_{i=-\infty}^{n-1} D_{i,n,q} \widetilde{Z}^{\leq}_{i-q+1,n} > \frac{\epsilon}{10} \bigg) +
\Pr \bigg(\sum_{i=-\infty}^{n-1} D_{i,n,q} \widetilde{Z}^{>}_{i-q+1,n} > \frac{\epsilon}{10} \bigg).
\end{equation}
Using Markov's inequality, the triangle inequality $|\sum_{i=1}^{\infty}a_{i}|^{s} \leq \sum_{i=1}^{\infty}|a_{i}|^{s}$ with $s \in (0,1]$, the fact that $(C_{i})$ is independent of $(Z_{i})$ and the stationarity of the sequence $(Z_{i})$, for the first term on the right-hand side of (\ref{e:Diqn1}) we obtain
\begin{eqnarray*}
  \nonumber \Pr \bigg(\sum_{i=-\infty}^{n-1} D_{i,n,q} \widetilde{Z}^{\leq}_{i-q+1,n} > \frac{\epsilon}{10} \bigg) & \leq & \Big(\frac{\epsilon}{10} \Big)^{-\gamma} \mathrm{E} \bigg( \sum_{i=-\infty}^{n-1} |D_{i,n,q}\widetilde{Z}^{\leq}_{i-q+1,n}| \bigg)^{\gamma} \\[0.5em]
   & \leq & \Big(\frac{\epsilon}{10} \Big)^{-\gamma} \mathrm{E} \bigg( \sum_{i=-\infty}^{n-1} |D_{i,n,q}|^{\gamma}|\widetilde{Z}^{\leq}_{i-q+1,n}|^{\gamma} \bigg)\\[0.5em]
  & \leq & \Big(\frac{\epsilon}{10} \Big)^{-\gamma} \mathrm{E}|\widetilde{Z}^{\leq}_{1,n}|^{\gamma}  \sum_{i=-\infty}^{n-1} \mathrm{E}|D_{i,n,q}|^{\gamma}.
\end{eqnarray*}
Again by the triangle inequality we have
$$ \sum_{i=-\infty}^{n-1} \mathrm{E} |D_{i,n,q}|^{\gamma} \leq \sum_{i=-\infty}^{0}  \sum_{j=0}^{n-1} \mathrm{E}|C_{q-i+j}|^{2\gamma} + \sum_{i=1}^{n-1} \sum_{j=i}^{n-i} \mathrm{E} |C_{q-i+j}|^{2\gamma}.$$
Note that every $\mathrm{E}|C_{j}|^{2\gamma}$, for $j=q, q+1, \ldots$, appears in the sum $\sum_{i=-\infty}^{0}  \sum_{j=0}^{n-1} \mathrm{E}|C_{q-i+j}|^{2\gamma}$ at most $n$ times, and that the sum $\sum_{i=1}^{n-1} \sum_{j=i}^{n-i} \mathrm{E} |C_{q-i+j}|^{2\gamma}$ is bounded above by $(n-1) \sum_{j=q}^{\infty} \mathrm{E}|C_{j}|^{2\gamma}$. Hence
\begin{eqnarray}\label{e:Diqn2}
  \nonumber \Pr \bigg(\sum_{i=-\infty}^{n-1} D_{i,n,q} \widetilde{Z}^{\leq}_{i-q+1,n} > \frac{\epsilon}{10} \bigg)\\[0.4em]
   \nonumber & \hspace*{-18em} \leq & \hspace*{-9em} \Big(\frac{\epsilon}{10} \Big)^{-\gamma} \mathrm{E}|\widetilde{Z}^{\leq}_{1,n}|^{\gamma}  \bigg(
  n  \sum_{j=q}^{\infty} \mathrm{E} |C_{j}|^{2\gamma} + (n-1)  \sum_{j=q}^{\infty} \mathrm{E} |C_{j}|^{2\gamma} \bigg) \\[0.6em]
  & \hspace*{-18em} \leq & \hspace*{-9em} 2 \Big(\frac{\epsilon}{10} \Big)^{-\gamma} n \mathrm{E}|\widetilde{Z}^{\leq}_{1,n}|^{\gamma}  \sum_{j=q}^{\infty} \mathrm{E} |C_{j}|^{2\gamma}.
\end{eqnarray}
Similarly
\begin{equation}\label{e:Diqn3}
   \Pr \bigg(\sum_{i=-\infty}^{n-1} D_{i,n,q} \widetilde{Z}^{>}_{i-q+1,n} > \frac{\epsilon}{10} \bigg)
   \leq 2 \Big(\frac{\epsilon}{10} \Big)^{-\delta} n \mathrm{E}|\widetilde{Z}^{>}_{1,n}|^{\delta}  \sum_{j=q}^{\infty} \mathrm{E} |C_{j}|^{2\delta}.
\end{equation}
Since $2\gamma > \alpha$ and $2 \delta < \alpha$, By Karamata's theorem and (\ref{eq:niz}), as $n \to \infty$,
$$ n\mathrm{E}|\widetilde{Z}^{\leq}_{1,n}|^{\gamma} = \frac{\mathrm{E}(|Z_{1}|^{2\gamma}1_{\{ |Z_{1}| \leq a_{n}\}})}{a_{n}^{2\gamma} \Pr(|Z_{1}| > a_{n})} \cdot n \Pr(|Z_{1}| > a_{n}) \to \frac{\alpha}{2\gamma - \alpha} < \infty$$
and
$$ n\mathrm{E}|\widetilde{Z}^{>}_{1,n}|^{\delta} = \frac{\mathrm{E}(|Z_{1}|^{2\delta}1_{\{ |Z_{1}| > a_{n}\}})}{a_{n}^{2\delta} \Pr(|Z_{1}| > a_{n})} \cdot n \Pr(|Z_{1}| > a_{n}) \to \frac{\alpha}{\alpha - 2\delta} < \infty.$$
From this and relations (\ref{e:Diqn1}), (\ref{e:Diqn2}) and (\ref{e:Diqn3}) we conclude that
$$ \limsup_{n \to \infty} \Pr \Big( K_{1} > \frac{\epsilon}{5} \Big) \leq c_{1} \bigg( \sum_{j=q}^{\infty} \mathrm{E} |C_{j}|^{2\gamma} + \sum_{j=q}^{\infty} \mathrm{E} |C_{j}|^{2\delta} \bigg),$$
where $c_{1} =  2 (\epsilon/10)^{-\gamma} \alpha/(2\gamma - \alpha) +  2 (\epsilon/10)^{-\delta} \alpha/(\alpha - 2\delta) < \infty$. Now letting $q \to \infty$, conditions (\ref{e:momcond2}) and (\ref{e:modnew1}) imply
\begin{equation}\label{e:K1}
 \lim_{q \to \infty} \limsup_{n \to \infty} \Pr \Big(K_{1} > \frac{\epsilon}{5} \Big)=0.
\end{equation}
We handle $K_{3}$ similarly to obtain
$$ \limsup_{n \to \infty} \Pr \Big( K_{3} > \frac{\epsilon}{5} \Big) \leq c_{3} \bigg[ \bigg( \sum_{j=q}^{\infty} \mathrm{E} |C_{j}|^{\gamma} \bigg)^{2} + \bigg( \sum_{j=q}^{\infty} \mathrm{E} |C_{j}|^{\delta} \bigg)^{2} \bigg],$$
with $c_{3} =   (\epsilon/10)^{-\gamma} \alpha/(2\gamma - \alpha) +   (\epsilon/10)^{-\delta} \alpha/(\alpha - 2\delta)$, and again conditions (\ref{e:momcond2}) and (\ref{e:modnew1}) yield
\begin{equation}\label{e:K3}
 \lim_{q \to \infty} \limsup_{n \to \infty} \Pr \Big(K_{3} > \frac{\epsilon}{5} \Big)=0.
\end{equation}
Further, by Markov's inequality we obtain
\begin{equation*}
  \Pr \Big(K_{2} > \frac{\epsilon}{5} \Big)  \leq  \Big( \frac{\epsilon}{10} \Big)^{-r} \mathrm{E} \bigg( \sum_{i=1}^{n} \sum_{0 \leq j < s < \infty}\frac{|C_{j}C_{s}Z_{i-j}Z_{i-s}|}{a_{n}^{2}} \bigg)^{r}
\end{equation*}
with $r$ as in (\ref{c:momcs}). Applying the triangle inequality and the fact that $(Z_{i})$ is an i.i.d.~sequence independent of $(C_{i})$ we get
\begin{eqnarray*}
 \nonumber  \Pr \Big(K_{2} > \frac{\epsilon}{5} \Big) & \leq & \Big( \frac{\epsilon}{10} \Big)^{-r}  \sum_{i=1}^{n} \sum_{0 \leq j < s < \infty} \mathrm{E} \bigg( \frac{|C_{j}C_{s}Z_{i-j}Z_{i-s}|}{a_{n}^{2}} \bigg)^{r}\\[0.4em]
  \nonumber & = &  \Big( \frac{\epsilon}{10} \Big)^{-r}  \sum_{i=1}^{n} \sum_{0 \leq j < s < \infty} \mathrm{E}|C_{j}C_{s}|^{r} \frac{\mathrm{E}|Z_{i-j}|^{r} \mathrm{E} |Z_{i-s}|^{r}}{a_{n}^{2r}} \\[0.4em]
  \nonumber & = &   \Big( \frac{\epsilon}{10} \Big)^{-r} \frac{n}{a_{n}^{2r}} [\mathrm{E}|Z_{1}|^{r}]^{2}  \sum_{0 \leq j < s < \infty} \mathrm{E}|C_{j}C_{s}|^{r}.
\end{eqnarray*}
Note that $\mathrm{E}|Z_{1}|^{r} < \infty$ since $r < \alpha$. By H\"{o}lder's inequality and condition (\ref{c:momcs}) we have
\begin{eqnarray*}
 \sum_{0 \leq j < s < \infty} \mathrm{E}|C_{j}C_{s}|^{r} &\leq&  \sum_{0 \leq j < s < \infty} \Big( \mathrm{E}|C_{j}|^{2r} \mathrm{E}|C_{s}|^{2r} \Big)^{1/2} = \sum_{j=0}^{\infty} \sum_{s=j+1}^{\infty} \Big( \mathrm{E}|C_{j}|^{2r} \mathrm{E}|C_{s}|^{2r} \Big)^{1/2}\\[0.4em]
 &\leq& \bigg( \sum_{j=0}^{\infty} (\mathrm{E}|C_{j}|^{2r})^{1/2} \bigg)^{2} < \infty,
\end{eqnarray*}
As in the proof of Theorem~\ref{t:FinMA} (after relation (\ref{e:App1})) it holds that
$n/a_{n}^{2r} \to 0$ as $n \to \infty$. Therefore
\begin{equation}\label{e:K2}
\lim_{n \to \infty} \Pr \Big(K_{2} > \frac{\epsilon}{5} \Big)=0,
\end{equation}
and similarly
\begin{equation}\label{e:K4}
\lim_{n \to \infty} \Pr \Big(K_{4} > \frac{\epsilon}{5} \Big)=0.
\end{equation}
Since by the definition of $C_{*}^{q}$
$$ K_{5} = 2 \sum_{i=1}^{n} \sum_{j=0}^{q-1}\frac{|C_{j}C_{*}^{q}Z_{i-j}Z_{i-q}|}{a_{n}^{2}} \leq 2 \sum_{i=1}^{n} \sum_{j=0}^{q-1} \sum_{s=q}^{\infty} \frac{|C_{j}C_{s}Z_{i-j}Z_{i-q}|}{a_{n}^{2}},$$
we analogously obtain
\begin{equation}\label{e:K5}
\lim_{n \to \infty} \Pr \Big(K_{5} > \frac{\epsilon}{5} \Big)=0.
\end{equation}
Now from (\ref{e:M2Wnq}) and relations (\ref{e:K1})-(\ref{e:K5}) we obtain (\ref{e:SlutskyINFW}), and therefore we conclude that $L_{n} \dto L$ in $D^{2}$ with the weak $M_{2}$ topology.
\end{proof}

\begin{rem}
In the case when the sequence of coefficients $(C_{j})$ is deterministic, condition (\ref{e:momcond2}) reduces to
$\sum_{i=0}^{\infty}|C_{i}|^{\delta} < \infty$, since the latter implies $\sum_{i=0}^{\infty}|C_{i}|^{2\delta} < \infty$. In general $\sum_{i=0}^{\infty}|C_{i}|^{\lambda_{1}} < \infty$ implies $\sum_{i=0}^{\infty}|C_{i}|^{\lambda_{2}} < \infty$ for $0 < \lambda_{1} < \lambda_{2}$. Therefore in this case conditions (\ref{c:momcs}), (\ref{e:modnew1}) and (\ref{eq:infmaTK3}) in Theorem~\ref{t:InfMA} can be dropped since they are all implied by $\sum_{i=0}^{\infty}|C_{i}|^{\delta} < \infty$. This in general does not hold when the coefficients are random (see Krizmani\'{c}~\cite{Kr22-1}, Remark 3.1, and Krizmani\'{c}~\cite{Kr19}, p.~739).
\end{rem}

\section{Self-normalized partial sum processes}
\label{S:Selfnormalized}

As noted in Remark~\ref{r:M2M1}, under the conditions from Theorem~\ref{t:InfMA}, the joint convergence of $L_{n}=(V_{n}, W_{n})$, for infinite order linear processes, actually holds in the $M_{2}$ topology on the first coordinate and in the $M_{1}$ topology on the second coordinate. So if we consider only the second coordinate of $L_{n}$, we have $W_{n} \dto W$ in $D^{1}$ with the $M_{1}$ topology. The function $\pi \colon D^{1} \to \mathbb{R}$, defined by $\pi(x)=x(1)$, is continuous with respect to the $M_{1}$ topology on $D^{1}$ (see Theorem 12.5.1 in Whitt~\cite{Whitt02}), and therefore the continuous mapping theorem yields $\pi(W_{n}) \dto \pi(W)$ as $n \to \infty$, that is
$$  W_{n}(1) = \sum_{i = 1}^{n} \frac{X_{i}^{2}}{a_{n}^{2}} \dto C^{(2)}W(1).$$

\begin{thm}\label{t:functSN}
Let $(X_{i})$ be a linear process defined by
$$ X_{i} = \sum_{j=0}^{\infty}C_{j}Z_{i-j}, \qquad i \in \mathbb{Z},$$
where $(Z_{i})_{i \in \mathbb{Z}}$ is an i.i.d.~sequence of regularly varying random variables satisfying $(\ref{e:regvar})$ and $(\ref{eq:pq})$ with $\alpha \in (0,2)$. Assume conditions $(\ref{e:oceknula})$ and $(\ref{e:sim})$ hold. Let $(C_{i})_{i \geq 0}$ be a sequence of random variables, independent of $(Z_{i})$, satisfying conditions $(\ref{eq:InfiniteMAcond})$, $(\ref{e:momcond2})$, $(\ref{c:momcs})$ and $(\ref{e:modnew1})$.
       If $\alpha \in [1,2)$ suppose further condition $(\ref{eq:infmaTK3})$ holds.
Then
$$  \frac{1}{\zeta_{n}}  \sum_{i=1}^{\floor {n\,\cdot}}X_{i}=  \frac{V_{n}(\,\cdot\,)}{\sqrt{W_{n}(1)}} \dto \frac{C^{(1)}V(\,\cdot\,)}{\sqrt{C^{(2)}W(1)}} \qquad \textrm{as} \ n \to \infty,$$
in $D^{1}$ endowed with the Skorokhod $M_{2}$ topology, whit $\zeta_{n}=\sqrt{X_{1}^{2}+\ldots + X_{n}^{2}}$, and $L = (C^{(1)}V, C^{(2)}W)$ as given in Theorem~\ref{t:InfMA}.
\end{thm}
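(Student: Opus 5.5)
The plan is to start from Theorem~\ref{t:InfMA}, which gives $L_{n}=(V_{n},W_{n}) \dto (C^{(1)}V, C^{(2)}W)$ in $D^{2}$ with the weak $M_{2}$ topology. We then pass to the self-normalized process by continuous mapping. Note that $W_{n}$ is nondecreasing with $W_{n}(0)=0$, so $W_{n}\in D_{m}([0,1],\mathbb{R})$. Likewise $C^{(2)}W$ is a.s.\ nondecreasing with value $0$ at $0$, since $C^{(2)}\geq 0$ and $W$ is a subordinator by Remark~\ref{r:jointdepend1}, where $W(t)=\sum_{T_i\le t}P_i^2$. Since $D_{m}$ is measurable, the convergence may be regarded in $D^{1}\times D_{m}([0,1],\mathbb{R})$.

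First, apply Corollary~\ref{c:M2cont} and the continuous mapping theorem to the map $f(x,y)=(x,\widehat{y(1)})$. This gives
\[
(V_{n}, \widehat{W_{n}(1)}) \dto (C^{(1)}V, \widehat{C^{(2)}W(1)})
\]
in the weak $M_{2}$ topology. Next, compose with $(x,y)\mapsto (x,\sqrt{y})$. The square root of a constant function is again a constant function, and $y\mapsto\sqrt{y}$ is uniformly continuous on $[0,\infty)$. Uniform convergence of constants implies $M_{2}$ convergence, so this map is continuous on pairs whose second coordinate is a nonnegative constant. We then obtain $(V_{n}, \widehat{\sqrt{W_{n}(1)}}) \dto (C^{(1)}V, \widehat{\sqrt{C^{(2)}W(1)}})$.

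Finally, apply Lemma~\ref{l:M1div} with $i=2$: the map $h(x,y)=x/y$ is continuous on $D^{1}\times C^{\uparrow}_{0}([0,1],\mathbb{R})$. A positive constant function belongs to $C^{\uparrow}_{0}$, being continuous, nondecreasing and positive at $0$. Invoking the continuous mapping theorem in the form that allows a discontinuity set of limit-measure zero, we only need
\[
\Pr\bigl(C^{(2)}W(1)>0\bigr)=1.
\]
For the prelimit, define $h$ arbitrarily (say as $0$) off $C^{\uparrow}_{0}$; only continuity at points of the limit support matters. Then $V_{n}/\sqrt{W_{n}(1)} \dto C^{(1)}V/\sqrt{C^{(2)}W(1)}$ in the standard $M_{2}$ topology on $D^{1}$, which is the claim.

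The main obstacle is verifying that the limit a.s.\ lies in the continuity set, i.e.\ $C^{(2)}W(1)>0$ a.s. The factor $W(1)$ is fine: $W(1)=\sum_{T_i\le1}P_i^2>0$ a.s., since the Poisson process has infinitely many points in $[0,1]\times(0,\infty)$ because $\nu$ has infinite mass. For $C^{(2)}\eind C^{S}=\sum_j C_j^2$, we have $C^{S}=0$ only if all $C_j=0$. That would make $C=0$, which is excluded implicitly by condition (\ref{eq:InfiniteMAcond}), since it divides by $C$; I would state this explicitly, using the assumption $C\neq0$ a.s.\ as in Section~\ref{S:FiniteMA}. Independence of $C^{(2)}$ and $W$ then gives the product positive a.s. A secondary point is measurability of the sets on which the maps are applied. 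This follows from the remarks in Section~\ref{S:Mtop} on $C^{\uparrow}_{0}$ and $D_{m}$, and on the event $\{W_{n}(1)>0\}$, whose probability tends to one.
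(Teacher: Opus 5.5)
Your proposal is correct and follows the paper's proof. Both go from Theorem~\ref{t:InfMA}, through Corollary~\ref{c:M2cont} (replacing $W_{n}$ by the constant $\widehat{W_{n}(1)}$), to the continuous mapping theorem with a division map on $D^{1}\times C^{\uparrow}_{0}([0,1],\mathbb{R})$.

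The only differences are small improvements. You factor the paper's map $x/\sqrt{y}$ into a square root on constant functions followed by Lemma~\ref{l:M1div} itself, rather than an unproved variant ``similar to'' it. You also justify that the limit lies in $D^{1}\times C^{\uparrow}_{0}([0,1],\mathbb{R})$ almost surely, via $W(1)>0$ and $C^{S}>0$ (the latter using $C\neq 0$ a.s., which is implicit in (\ref{eq:InfiniteMAcond})); the paper only asserts this. Finally, you handle prelimit realizations with $W_{n}(1)=0$, which the paper does not address.
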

\begin{proof}
By Theorem~\ref{t:InfMA}, $L_{n} \dto L$ in $D^{2}$ with the weak $M_{2}$ topology. From this convergence relation, since
$$ \Pr[L \in D^{1} \times D_{m}([0,1], \mathbb{R})]=1,$$
by Corollary~\ref{c:M2cont} and the continuous mapping theorem
we obtain
\begin{equation}\label{e:mainconvSN}
      \widetilde{L}_{n} := (V_{n}, \widehat{W_{n}(1)}))
    \dto \widetilde{L} :=  (C^{(1)}V, C^{(2)}\widehat{W(1)})
\end{equation}
in $D^{1} \times D_{m}([0,1], \mathbb{R})$ with the weak $M_{2}$ topology, where for $a \in \mathbb{R}$, $\widehat{a}$ denotes the constant function defined by $\widehat{a}(t)=a$ for all $t \in [0,1]$.
Similar to Lemma~\ref{l:M1div} one can show that the function $g \colon D^{1} \times C^{\uparrow}_{0}([0,1], \mathbb{R}) \to D^{1}$ defined by
$$ g(x,y) = \frac{x}{\sqrt{y}},$$
is continuous
when $D^{1} \times C^{\uparrow}_{0}([0,1], \mathbb{R})$ is endowed with the weak $M_{2}$ topology and $D^{1}$ is endowed with the standard $M_{2}$ topology. Note that
$$ \Pr[\widetilde{L} \in D^{1} \times C^{\uparrow}_{0}([0,1], \mathbb{R})]=1,$$
 and hence from (\ref{e:mainconvSN}) by an application of the continuous mapping theorem we get $g(\widetilde{L}_{n}) \dto g(\widetilde{L})$ as $n \to \infty$, that is
$$ \frac{V_{n}(\,\cdot\,)}{\sqrt{W_{n}(1)}} \dto \frac{C^{(1)}V(\,\cdot\,)}{\sqrt{C^{(2)}W(1)}}$$
in $D^{1}$ with the $M_{2}$ topology.
\end{proof}




\section*{Acknowledgment}
 This work has been supported in part by University of Rijeka research grants uniri-prirod-18-9 and uniri-pr-prirod-19-16 and by Croatian Science Foundation under the project IP-2019-04-1239.


\end{document}